\newtheorem{thm}{Theorem}[section] %Theorem o Teorema a seconda
\newtheorem{prop}[thm]{Proposition}
\newtheorem{lemma}[thm]{Lemma}
\theoremstyle{definition}
\theoremstyle{remark}
\numberwithin{equation}{section}
\newcommand{\Hg}{\mathcal{H}_g}
\newcommand{\Hgn}{\mathcal{H}_{g,n}}
\newcommand{\bHg}{\overline{\mathcal{H}}_g}
\newcommand{\bHgn}{\overline{\mathcal{H}}_{g,n}}
\newcommand{\Mg}{\mathcal{M}_g}
\newcommand{\Mgn}{\mathcal{M}_{g,n}}
\newcommand{\bMg}{\overline{\mathcal{M}}_g}
\newcommand{\bMgn}{\overline{\mathcal{M}}_{g,n}}
\renewcommand{\O}{\mathcal{O}}
\newcommand{\D}{\mathcal{D}}
\newcommand{\Q}{\mathbb Q}
\newcommand{\Z}{\mathbb Z}
\renewcommand{\P}{\mathbb P}
\newcommand{\C}{\mathbb C}
\renewcommand{\c}{\subseteq}
\newcommand{\pn}{{\pi_n}}
\newcommand{\mc}[1]{\mathcal{#1}}
\newcommand{\cl}{\overline}
\newcommand{\set}[1]{\{#1\}}
\renewcommand{\phi}{\varphi}
\newcommand{\on}[1]{\operatorname{#1}}
\DeclareMathOperator{\Spec}{Spec}
\newcommand{\ang}[1]{\left \langle{#1}\right \rangle}
\title{Rational Picard group of moduli of pointed hyperelliptic curves}
\author{Federico Scavia}
\begin{document}

\begin{abstract}
	We determine the rational divisor class group of the moduli spaces of smooth pointed hyperelliptic curves and of their Deligne-Mumford compactification, over the field of complex numbers.
\end{abstract}		
	
\maketitle

\section{Introduction}
	The moduli stack $\Mg$ of smooth curves of fixed genus $g\geq 2$, together with its Deligne-Mumford compactification \cite{deligne1969irreducibility}, are fundamental objects in algebraic geometry. In moduli theory, an important role is also played by the pointed versions $\Mgn$ and $\bMgn$. Many geometric properties of these moduli stacks and of their coarse moduli spaces have been established. The irreducibility of $\Mgn$ and $\bMgn$ was proved in the seminal paper of Deligne and Mumford \cite{deligne1969irreducibility}, and the projectivity of the coarse moduli space of $\bMgn$ by Knudsen in \cite{knudsen1976projectivity}, \cite{knudsen1983projectivity} and \cite{knudsen1983projectivity3}.
	
	Of great interest is the tautological Chow ring of $\Mg$, whose study was initiated by Mumford \cite{mumford1983towards}. Relations between classes in this ring immediately inform the enumerative geometry of families of curves. In particular, over the field of complex numbers, the divisor class group (i.e. the Picard group) of $\Mgn$ and $\bMgn$ is well understood. When $g\geq 3$, Harer proved that $\on{Pic}(\bMgn)$ is a free abelian group on $n+1$ generators in \cite{harer1983second}, and Mumford showed that $\on{Pic}(\Mgn)$ and $\on{Pic}(\bMgn)$ are free abelian groups in \cite{mumford1977stability}. In \cite{arbarello1987picard}, Arbarello and Cornalba provided explicit bases for $\on{Pic}(\Mgn)$ and $\on{Pic}(\bMgn)$, when $g\geq 3$.

	The stack $\Hg$ of smooth hyperelliptic curves of fixed genus $g\geq 2$, its compactification $\bHg$, and their pointed generalizations $\Hgn$ and $\bHgn$ are also basic objects of study in moduli theory, but less is known about them compared to $\Mgn$ and $\bMgn$. In \cite{arsie2004stacks}, Arsie and Vistoli described $\Hg$ as a moduli stack parametrizing double covers of the projective line. They used this description to show that $\on{Pic}(\Hg)$ is finite cyclic, of order $8g+4$ for odd $g$, and $4g+2$ for even $g$. In \cite{gorchinskiy2008picard}, Gorchinskiy and Viviani produced geometrically meaningful generators for $\on{Pic}(\Hg)$. A presentation of $\on{Pic}(\bHg)$ via generators and relations was given by Cornalba in \cite{cornalba2006picard}. As a consequence of these results, $\on{Pic}(\Hg)_{\Q}=0$ and $\on{Pic}(\bHg)_{\Q}$ has a basis consisting of all the boundary divisors.
	
	In this article, we determine the rational divisor class group of the moduli of smooth $n$-pointed hyperelliptic curves $\Hgn$, and of its Deligne-Mumford compactification $\bHgn$, for every $n\geq 1$ (see \Cref{intro} for the definitions). We work over the field of complex numbers. 
		
\begin{thm}\label{hhg}
	For every $g\geq 2$ and $n\geq 1$, we have:
	\begin{enumerate}[label=(\alph*)] 
		\item $\on{Pic}(\Hgn)_{\Q}\cong \on{Cl}(\Hgn)_{\Q}$ has a basis given by $\psi$-classes;
		\item $\on{Cl}(\cl{\mc{H}}_{g,n})_{\Q}$ has a basis given by $\psi$-classes and all the boundary divisors.
	\end{enumerate}	
\end{thm}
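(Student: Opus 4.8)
The plan is to prove part (a) by pushing forward along the forgetful morphism $\pi\colon\Hgn\to\Hg$ dropping all markings. Since $\Hgn$ is a smooth Deligne--Mumford stack we already have $\on{Pic}(\Hgn)_{\Q}\cong\on{Cl}(\Hgn)_{\Q}$, so it suffices to treat the divisor class group. The fibre of $\pi$ over $[C]$ is the configuration space $C^n\setminus\Delta$, where $\Delta$ is the big diagonal; in particular each fibre is irreducible. Writing $K=\C(\Hg)$ and letting $C$ be the generic hyperelliptic curve over $K$, the excision sequence for the generic fibre reads
\[
\bigoplus_{V}\Q\,[V]\longrightarrow \on{Cl}(\Hgn)_{\Q}\longrightarrow \on{Cl}\big((C^n\setminus\Delta)_K\big)_{\Q}\longrightarrow 0,
\]
with $V$ ranging over prime divisors not dominating $\Hg$. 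Each such $V$ is the divisorial part of $\pi^{-1}Z$ for a codimension-one $Z\subsetneq\Hg$; as the fibres are irreducible this part is irreducible and equals $\pi^*[Z]$ rationally, hence lies in $\pi^*\on{Cl}(\Hg)_{\Q}=\on{Pic}(\Hg)_{\Q}=0$ by Arsie--Vistoli. Therefore the restriction map is an isomorphism and part (a) reduces to computing the divisor class group of the configuration space of the generic hyperelliptic curve.

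For that computation I would begin with the product $C^n$ and then excise the diagonals $\Delta_{ij}=\{x_i=x_j\}$. The key arithmetic input is that the generic hyperelliptic Jacobian has only torsion $K$-rational points and satisfies $\on{End}(\mathrm{Jac}(C)_{\overline K})=\Z$; consequently $\on{Pic}^0(C^n)(K)_{\Q}=0$, so $\on{Pic}(C^n_K)_{\Q}=\on{NS}(C^n_K)_{\Q}$ is spanned by the pullbacks $\mathrm{pr}_i^*h$ of the hyperelliptic class together with the diagonal correspondences $[\Delta_{ij}]$, and these $n+\binom{n}{2}$ classes are independent. Excising the $\Delta_{ij}$ kills exactly the correspondence classes and leaves $\bigoplus_{i=1}^n\Q\,\mathrm{pr}_i^*h\cong\Q^n$; the surviving anti-diagonals contribute nothing new, since rationally $[\{x_i=\iota(x_j)\}]=2\,\mathrm{pr}_i^*h+2\,\mathrm{pr}_j^*h-[\Delta_{ij}]$. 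Restricting $\psi_i$ to a fibre gives $\mathrm{pr}_i^*c_1(\omega_C)=(g-1)\,\mathrm{pr}_i^*h$, a nonzero multiple because $g\geq 2$, so the $\psi$-classes form a basis, proving (a).

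For part (b) I would compare $\bHgn$ with its interior via the localization sequence
\[
\bigoplus_{\delta}\Q\,[\delta]\longrightarrow \on{Cl}(\bHgn)_{\Q}\longrightarrow \on{Cl}(\Hgn)_{\Q}\longrightarrow 0,
\]
where $\delta$ runs over the boundary divisors. With part (a) this shows that the $\psi$-classes, which extend over the boundary, together with the boundary divisors generate. For independence, any relation restricts on the interior to a relation among the $\psi_i$, which is trivial by (a); hence it suffices to prove that the boundary divisors alone are independent in $\on{Cl}(\bHgn)_{\Q}$.

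The main obstacle is precisely this last independence, together with the Mordell--Weil vanishing used in (a). For the boundary I would enumerate the topological types of stable $n$-pointed hyperelliptic (admissible-cover) degenerations and exhibit, for each boundary divisor, a complete test family $B\to\bHgn$ meeting the prescribed divisor while having a controlled, essentially upper-triangular intersection pattern against the others; non-degeneracy of the resulting intersection matrix yields independence. One can bootstrap this from Cornalba's description of $\on{Pic}(\bHg)_{\Q}$ through the forgetful morphisms $\bHgn\to\bHg$, tracking how each marked-point and node-smoothing divisor maps down. Confirming that these test families separate all boundary strata, and establishing the generic Mordell--Weil vanishing for the universal hyperelliptic family, are the two places where the genuine work lies.
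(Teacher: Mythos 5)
Your argument for part (a) takes a genuinely different route from the paper (which computes $H^1(\Hgn,\Q)$ and $H^2(\Hgn,\Q)$ via the Leray spectral sequence of $\pi_n$, Totaro's model for the cohomology of configuration spaces, and invariant theory for the hyperelliptic mapping class group, then concludes by the exponential sequence), and your reduction to the generic fibre is correct in outline: every vertical prime divisor is the reduction of $\pi^*$ of a divisor on $\Hg$, whose class vanishes rationally by Arsie--Vistoli. But two things block it from being a proof. First, a fixable technical point you do not address: the generic point of $\Hg$ is a $\Z/2$-gerbe (every hyperelliptic curve carries the involution), so there is no ``generic hyperelliptic curve over $K=\C(\Hg)$'' without either passing to the coarse space (where the generic fibre is $F(C,n)/\iota$ and one must take $\iota$-invariants) or to a finite cover trivializing the gerbe. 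Second, and decisively, the arithmetic inputs $\on{Pic}^0(C^n)(K)_{\Q}=0$ and $\on{End}(\on{Jac}(C)_{\overline K})=\Z$ are asserted, not proved; as you yourself say, this Mordell--Weil vanishing is where the genuine work lies. It is a hyperelliptic Franchetta-type theorem, and the natural proof (embedding the Mordell--Weil group rationally into $H^1$ of $\Hg$ with coefficients in the local system $R^1{\pi_1}_*\Q$, then killing that group by a twisted-coefficient computation for the hyperelliptic mapping class group, as in Tanaka or Kawazumi) requires essentially the same group-cohomological input that drives the paper's spectral-sequence argument. So part (a) is reduced to, not established by, your proposal.

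For part (b), generation is fine and identical to the paper, and the reduction of independence to independence of the boundary divisors alone (by restricting to the interior and applying (a)) is also fine. But the independence itself, which is the heart of (b), is deferred to unspecified test families with an ``upper-triangular intersection pattern.'' Three concrete pieces of work are missing and cannot be waved through. (i) The enumeration of the boundary divisors of $\bHgn$: one must show the preimages of $\delta_i$ and $\eta_i$ split into exactly $2^n$ components $\delta_{i,I}$, $\eta_{i,I}$, plus $\eta_{irr,n}$ and the rational-tail divisors $\delta_{0,I}$. (ii) Since $\bHgn$ is not known to be smooth, degrees of Weil divisor classes on test families are not defined until one proves smoothness in codimension one (the paper's open substack $\mc{U}_{g,n}$) and arranges every test family to lie inside that locus. (iii) The pullback formula $j^*\mc{D}_{irr,n}=\eta_{irr,n}+2\sum_{i,I}\eta_{i,I}$, with its multiplicity $2$, is what converts degree computations against classes restricted from $\bMgn$ (via Harris--Morrison) into statements about the $\eta$'s; it requires a transversality argument for the clutching maps along $\mc{U}_{g,n}$. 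Your plan to ``bootstrap from Cornalba'' conceals exactly this: the paper must rebuild pointed versions of the Cornalba--Harris families $F_1,\dots,F_{2g}$, by base change and section-splitting constructions, so that they stay inside $\mc{U}_{g,n}$ and meet only the prescribed divisors. As written, part (b) is a program whose two hard steps remain open.
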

	As we discuss in \Cref{h-intro}, the stack $\Hgn$ is smooth, hence $\on{Pic}(\Hgn)_{\Q}\cong \on{Cl}(\Hgn)_{\Q}$. We do not know if $\bHgn$ is smooth, but we show in \Cref{reg}(b) that it is smooth in codimension $1$. The question of the smoothness of $\bHgn$ appears to be open. 

	Recall the definition of $\psi$-classes $\psi_1,\dots,\psi_n$ in $\on{Pic}(\bMgn)\cong \on{Cl}(\bMgn)$: denoted by $\mc{L}_i$ the line bundle on $\bMgn$ which to a family $X\to S$ in $\bMgn(S)$ with sections $\sigma_1,\dots,\sigma_n$ associates the line bundle $\sigma_i^*\omega_{X/S}$, $\psi_i$ is defined as the class of $\mc{L}_i$ in $\on{Pic}(\bMgn)$; see \cite[\S XIII.2]{arbarello2011geometry}. 

	The strategy of the proof is as follows. Using the Leray spectral sequence for the forgetful morphism $\Hgn\to \Hg$ together with results of Totaro \cite{totaro1996configuration} on cohomology of configuration spaces, we show that $H^1(\Hgn,\Q)=0$ and that $H^2(\Hgn,\Q)$ has a basis consisting of all $\psi$-classes. The exponential sequence for $\Hgn$  gives (a), from which it immediately follows that $\on{Cl}(\cl{\mc{H}}_{g,n})_{\Q}$ is generated by $\psi$-classes and irreducible boundary divisors. We then enumerate the irreducible boundary divisors and show that this set of generators is linearly independent via degree computations on suitable families of curves.

\section{Preliminaries on moduli stacks of curves}\label{intro}
For every $g,n\geq 0$ such that $2g-2+n>0$, we denote by $\Mgn$ the moduli stack of smooth $n$-pointed projective curves of genus $g$, and by $\bMgn$ its Deligne-Mumford compactification. We also let $\mc{M}^{rt}_{g,n}$ be the moduli stack of stable curves with rational tails. By definition, it is the inverse image of $\Mg$ under the forgetful morphism $\bMgn\to \bMg$. When $C$ is a smooth curve of genus $g$, we will write $F(C,n)$ for the ordered configuration space of $n$ distinct points in $C$:
\[F(C,n):=\set{(z_1,\dots,z_n)\in C^n: z_i\neq z_j \text{ for every $i\neq j$}}.\]

\begin{lemma}\label{m-intro}
	Let $g,n\geq 0$ such that $2g-2+n>0$.
	\begin{enumerate}[label=(\alph*)]
		\item The forgetful morphism $\bMgn\to \bMg$ is proper, flat, and has connected fibers of dimension $n$.
		\item If $C$ is a smooth curve of genus $g$, the fiber of $\Mgn\to \Mg$ at $[C]$ is isomorphic to $F(C,n)$, and the fiber of $\bMgn\to \bMg$ at $[C]$ is isomorphic to the Fulton-MacPherson compactification of $F(C,n)$.
		\item The forgetful morphism $\Mgn^{rt}\to \Mg$ is proper, smooth and has irreducible fibers.
		\item The forgetful morphism $\Mgn\to \Mg$ is smooth.
	\end{enumerate}
\end{lemma}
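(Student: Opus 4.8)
The plan is to reduce all four statements to two inputs: Knudsen's theorem that the morphism $\bMg_{,k+1}\to\bMg_{,k}$ forgetting the last marked point is the universal curve over $\bMg_{,k}$ \cite{knudsen1983projectivity}, and the theorem of Fulton and MacPherson identifying the fibers of $\bMgn\to\bMg$ over smooth curves with their configuration space compactifications. For (a), I would factor $\bMgn\to\bMg$ as the composite of the $n$ universal-curve morphisms $\bMg_{,k+1}\to\bMg_{,k}$ for $0\le k<n$. Each of these is proper and flat with geometric fibers that are connected reduced curves of dimension one, so the composite is proper and flat with fibers of dimension $n$. For connectedness over the whole base (including boundary points), I would argue by induction on $k$: since each universal curve has geometrically connected and reduced fibers, cohomology and base change gives $f_*\O=\O$, which is preserved after restricting the base to a single fiber $\pi_{k-1}^{-1}([C])$; a proper morphism with $\pi_*\O=\O$ over a connected base has connected total space, so the inductive step shows each fiber of $\bMg_{,k}\to\bMg$ is connected.

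For (b), the open statement follows from the moduli interpretation of $\Mgn$: the fiber over $[C]$ parametrizes the choices of $n$ distinct ordered points on $C$, which is exactly $F(C,n)$. For the compactified statement I would invoke the Fulton--MacPherson theorem identifying the family of stable $n$-pointed curves with rational tails lying over a fixed smooth curve $C$ with the Fulton--MacPherson compactification of $F(C,n)$; this gives the identification of the fiber of $\bMgn\to\bMg$ over $[C]$.

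For (c), by definition $\Mgn^{rt}=\bMgn\times_{\bMg}\Mg$, so properness and flatness of $\Mgn^{rt}\to\Mg$ follow from (a) by base change. By (b), its geometric fibers are the Fulton--MacPherson compactifications of $F(C,n)$; these are smooth of constant dimension $n$ and irreducible, since $F(C,n)$ is a smooth connected (hence irreducible) open subvariety of $C^n$ and its compactification is a smooth irreducible variety. A flat, finitely presented morphism with smooth equidimensional fibers is smooth, so $\Mgn^{rt}\to\Mg$ is smooth with irreducible fibers, which is (c). Finally, (d) is immediate: $\Mgn$ is the open substack of $\Mgn^{rt}$ on which the pointed curve carries no rational tails, i.e.\ is smooth, so $\Mgn\to\Mg$ factors as an open immersion followed by the smooth morphism of (c), hence is smooth.

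I expect the main obstacle to be locating and correctly applying the Fulton--MacPherson fiber identification used in (b), as the smoothness and irreducibility assertions in (c) and (d) then become formal consequences of flatness. Some care is also needed to justify that smoothness of a morphism of stacks may be checked on a smooth atlas, so that the scheme-theoretic criterion ``flat $+$ smooth equidimensional fibers $\Rightarrow$ smooth'' applies, and to ensure the connectedness induction in (a) is run with the correct cohomology-and-base-change compatibility of $f_*\O=\O$.
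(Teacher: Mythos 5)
Your proposal is correct and follows essentially the same route as the paper: factoring the forgetful morphism through Knudsen's universal curves for (a), identifying fibers via the moduli interpretation and the Fulton--MacPherson compactification for (b), and deducing (c) and (d) by base change, the criterion that a flat morphism with smooth equidimensional fibers is smooth, and the openness of $\Mgn$ in $\Mgn^{rt}$. The only difference is one of detail: the paper treats connectedness of the fibers in (a) as immediate from the factorization, whereas you spell it out by induction using cohomology and base change ($f_*\O=\O$), which is a valid (if slightly heavier than necessary) justification.
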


\begin{proof}
	Let $\pi:\mc{Z}_{g,n}\to \bMgn$ be the universal curve over $\bMgn$. By definition, a morphism $S\to \mc{Z}_{g,n}$ corresponds to a family of $n$-pointed stable curves over $S$, together with an extra section that is allowed to pass through the nodes; see \cite[Definition 1.2]{knudsen1983projectivity}. 	For every morphism $f:S\to \bMgn$, the pullback of $\mc{Z}_{g,n}\to \bMgn$ along $f$ is the family of $n$-pointed stable curves over $S$ corresponding to $f$. It follows that $\mc{Z}_{g,n}\to \bMgn$ is proper, flat and has connected fibers of dimension $1$, and that its restriction to $\pi^{-1}(\Mgn)$ is smooth. 
	
	Knudsen \cite{knudsen1983projectivity} constructed a contraction morphism $c:\cl{\mc{M}}_{g,n+1}\to \mc{Z}_{g,n}$  and a stabilization morphism $s:\mc{Z}_{g,n}\to\cl{\mc{M}}_{g,n+1}$ which are compatible with $\pi$ and the morphism ${\mc{M}}_{g,n+1}\to \Mgn$ which forgets the last section. He then proved that $c$ and $s$ are inverse to each other in \cite[Corollary 2.6]{knudsen1983projectivity}. The forgetful morphism $\cl{\mc{M}}_{g,n+1}\to \bMgn$ is defined as the composition $\pi\circ c$.

	(a) We may factor the morphism $\bMgn\to \bMg$ as a composition \[\bMgn\to \cl{\mc{M}}_{g,n-1}\to\dots\to \cl{\mc{M}}_{g,1}\to \bMg.\] By the previous discussion, every intermediate map is proper, flat, and has connected fibers of dimension $1$. It follows immediately that $\bMgn\to \bMg$ is proper, flat and has connected fibers of dimension $n$. 
	
	(b) Let $\mc{E}_{g,n}\to \Mg$ be the $n$-fold fibered product of the forgetful morphism $\mc{M}_{g,1}\to \Mg$. The fiber of $\mc{E}_{g,n}\to \Mg$ at $[C]$ is isomorphic to $C^n$. We can view $\mc{E}_{g,n}$ as the stack parametrizing smooth curves of genus $g$ with $n$ sections that are allowed to cross each other. We have an open embedding $\Mgn\hookrightarrow \mc{E}_{g,n}$ over $\Mg$, as the locus where the sections are disjoint. It follows that the fiber of $\Mgn\to \Mg$ at $[C]$ is the complement of the fat diagonal in $C^n$, i.e. it is $F(C,n)$. 
	
	In \cite[p. 194-195]{fulton1994compactification}, the Fulton-MacPherson compactification of $F(C,n)$ is described as the moduli space of all configurations of $n$ distinct smooth points on $C$ with trees of $\P^1$ such that the resulting pointed nodal curve has finite automorphism group, up to projective equivalence on the rational curves. These are exactly the configurations that are parametrized by the fiber of the forgetful morphism  $\bMgn\to \bMg$ at $[C]$; see also \cite[p. 195]{petersen2017}.
	
	(c) The morphism $\Mgn^{rt}\to \Mg$ is flat because it is the pullback of $\bMgn\to \bMg$ along the open embedding $\Mg\hookrightarrow \bMg$. By (b) its fibers are the Fulton-MacPherson compactifications, which are smooth.
	
	(d) This follows immediately from (c).
\end{proof}

Let $g\geq 2$ be an integer. A \emph{hyperelliptic curve} $C$ of genus $g$ is a smooth complete curve admitting a morphism $C\to \P^1$ of degree $2$. Equivalently, there exists an involution $\sigma$ of $C$ with quotient the projective line. It is a classical fact that such $\sigma$ is unique: it is called the \emph{hyperelliptic involution} of $C$.

Let $\Hg$ be the closed substack of $\Mg$, with reduced structure, parametrizing hyperelliptic curves of genus $g$. Both $\Hg$ and its Zariski closure $\bHg$ in $\bMg$ are smooth and irreducible of dimension $2g-1$; see \cite[Lemma XI.6.15, Exercise XII.C-1]{arbarello2011geometry}, and \cite{arsie2004stacks}[Corollary 4.7] for a presentation of $\Hg$ as a quotient stack. A stable curve $C$ is defined to be \emph{hyperelliptic} if it admits  an involution $\sigma$ with only isolated fixed points and such that the quotient $C/\ang{\sigma}$ is a nodal curve of genus $0$; see \cite[p. 101]{arbarello2011geometry}.  Such an involution is again unique and is called the \emph{hyperelliptic involution}; see \cite[Lemma X.3.5]{arbarello2011geometry}. By \cite[Lemma XI.6.14]{arbarello2011geometry}, the points of $\bHg$ correspond exactly to stable hyperelliptic curves.

We denote by $\Gamma_g$ the hyperelliptic mapping class group. It is a standard fact that $\Hg$ may be constructed as an orbifold quotient of a contractible analytic subspace of the Teichm\"{u}ller space $\mc{T}_g$ by the action of $\Gamma_g$; see \cite{gonzalezmoduli} or \cite[\S 1, \S 3.2]{kordek2016picard}. Therefore, $\Hg$ is an Eilenberg-Maclane space $K(\Gamma_g,1)$, in the sense of orbifolds. 

The stack $\Hgn$ parametrizes smooth hyperelliptic curves of genus $g$, together with $n$ distinct marked points. We denote by $H_{g,n}$ the coarse moduli space of $\Hgn$. We define the stack $\bHgn$ as the inverse image of $\bHg$ under the forgetful morphism $\bMgn\to\bMg$, with the reduced structure. Recall that the definition of the forgetful morphism involves a contraction procedure; see \cite[\S X.6]{arbarello2011geometry}. In other words, an $n$-pointed stable curve of genus $g$ belongs to $\bHgn$ if and only if, after forgetting its markings and contracting its unstable components, one obtains a stable hyperelliptic curve.  We denote by $\cl{H}_{g,n}$ the coarse moduli space of $\bHgn$. 

Let $\mc{H}^{rt}_{g,n}$ be the moduli stack of hyperelliptic curves with rational tails. It is the inverse image of $\Hg$ under the forgetful morphism $\bMgn\to \bMg$.

\begin{prop}\label{h-intro}
	Let $g\geq 2$ and $n\geq 0$.
\begin{enumerate}[label=(\alph*)]
	\item The forgetful morphism $\bHgn\to \bHg$ is universally open and has connected fibers of dimension $n$.
	\item If $C$ is a smooth hyperelliptic curve of genus $g$, the fiber of $\Hgn\to \Hg$ at $[C]\in \Hg$ is isomorphic to $F(C,n)$, and the fiber of  $\bHgn\to \bHg$ at $[C]$ is the Fulton-MacPherson compactification of $F(C,n)$.
	\item The forgetful morphism $\Hgn^{rt}\to \Hg$ is proper, smooth, and has irreducible fibers.
	\item The forgetful morphism $\Hgn\to \Hg$ is smooth.
	\item The stacks $\Hgn^{rt}$ and $\Hgn$ are smooth and irreducible.
	\item The stack $\bHgn$ is irreducible, and it is the closure of $\Hgn$ inside $\bMgn$.
\end{enumerate}
\end{prop}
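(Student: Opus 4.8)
The plan is to deduce every assertion from the corresponding statement in \Cref{m-intro} by base change along the inclusions $\Hg\hookrightarrow \Mg$ and $\bHg\hookrightarrow \bMg$, using that $\Hgn^{rt}$, $\Hgn$, and $\bHgn$ are defined as inverse images under the forgetful morphism $\bMgn\to \bMg$. Concretely, since $\Hg\subseteq \Mg$ one has $\Hgn^{rt}=\Mgn^{rt}\times_{\Mg}\Hg$ and $\Hgn=\Mgn\times_{\Mg}\Hg$, while $\bHgn$ is the reduced inverse image of $\bHg$ under $\bMgn\to \bMg$. I would treat the parts in the order (c), (d), (b), (a), (e), (f), since each uses the previous ones.

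For (c) and (d) I would simply base change. The morphism $\Mgn^{rt}\to \Mg$ is proper, smooth, with irreducible fibers by \Cref{m-intro}(c), so its pullback $\Hgn^{rt}\to \Hg$ inherits all three properties; likewise $\Hgn\to \Hg$ is the pullback of the smooth morphism $\Mgn\to \Mg$ from \Cref{m-intro}(d), hence smooth. Because smoothness is stable under base change and $\Hg$ is smooth, both total spaces are automatically smooth, so these inverse images are already reduced and agree with the reduced structures in the definitions. Part (b) then follows from \Cref{m-intro}(b): the fiber of $\Hgn\to \Hg$ at $[C]$ is the fiber of $\Mgn\to \Mg$, namely $F(C,n)$, and for $[C]\in \Hg$ the fiber of $\bHgn\to \bHg$ coincides with the fiber of the smooth morphism $\Hgn^{rt}\to \Hg$, which by (c) is the Fulton--MacPherson compactification.

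For (a), the scheme-theoretic inverse image $\bHg\times_{\bMg}\bMgn$ is proper, flat, with connected fibers of dimension $n$ by base change from \Cref{m-intro}(a); in particular it is universally open. Passing to the reduced substack $\bHgn$ does not change the underlying topological space, so connectedness and the dimension of the fibers are preserved, and universal openness is preserved because the inclusion $\bHgn\hookrightarrow \bHg\times_{\bMg}\bMgn$ is a surjective closed immersion, hence itself universally open, and a composite of universally open morphisms is universally open. Part (e) is then formal: $\Hgn^{rt}$ and $\Hgn$ are smooth over the smooth stack $\Hg$, hence smooth; and each is irreducible by the standard fact that a flat, finite-type morphism onto an irreducible base with irreducible equidimensional fibers has irreducible source, applied to $\Hgn^{rt}\to \Hg$ (irreducible fibers by (c)) and to $\Hgn\to \Hg$ (fibers $F(C,n)$, open in the irreducible $C^n$).

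Finally, for (f) I would exploit the openness from (a). Writing $p$ for the forgetful morphism, the open substack $\Hgn^{rt}=p^{-1}(\Hg)$ of $\bHgn$ is irreducible by (e). Since $p$ is open and $\Hg$ is dense in $\bHg$, the preimage $p^{-1}(\Hg)$ is dense in $\bHgn$: any nonempty open $U$ has $p(U)$ open and nonempty, hence meeting the dense set $\Hg$, so $U$ meets $p^{-1}(\Hg)$. Therefore $\bHgn=\overline{\Hgn^{rt}}$ is irreducible. As $\Hgn$ is a nonempty open substack of the irreducible $\Hgn^{rt}$ it is dense there, so $\overline{\Hgn}=\overline{\Hgn^{rt}}=\bHgn$; and since $\bHgn$ is closed in $\bMgn$ (being the inverse image of the closed $\bHg$), this closure is taken inside $\bMgn$, giving the claim. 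I expect the main obstacle to be the bookkeeping around the reduced structure in part (a): flatness need not survive the reduction, which is exactly why the statement asserts only universal openness, so the argument must run through the topological characterization rather than through flatness.
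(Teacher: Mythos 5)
Your proof is correct and follows essentially the same route as the paper: every part is deduced from \Cref{m-intro} by base change, the reduced structure is handled topologically (your observation that the reduction map is a surjective closed immersion, hence universally open, matches the paper's remark that openness is a topological property), and irreducibility and density in (e)--(f) are obtained from openness plus irreducible fibers exactly as in the paper's appeal to \cite[004Z]{stacks-project}. The only cosmetic slip is attributing the identification of the fibers of $\Hgn^{rt}\to\Hg$ with Fulton--MacPherson compactifications to part (c), when it really comes from \Cref{m-intro}(b), as in the paper.
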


\begin{proof}
	(a) By \Cref{m-intro}(a), the forgetful morphism $\bMgn\to \bMg$ is flat, hence universally open. It follows that $\bHgn\to\bHg$ is open (openness is a topological property, so it holds even if we have given $\bHgn$ the reduced structure), and has connected fibers of dimension $n$.
	
	(b) This follows immediately from \Cref{m-intro}(b).
	
	(c) The morphism $\Hgn^{rt}\to \Hg$ is the base change of $\Mgn^{rt}\to \Mg$ along the inclusion $\Hg\hookrightarrow \Mg$, hence it is smooth by \Cref{m-intro}(c).
	
	(d) This follows immediately from (c).

	(e) As $\Hgn$ is open in $\Hgn^{rt}$, it suffices to prove the claim for $\Hgn^{rt}$. We know that $\Hg$ is smooth and irreducible. By (a), the forgetful morphism $\Hgn^{rt}\to \Hg$ is open, and by (b) the fibers of $\bHgn\to \bHg$ are irreducible. The conclusion follows from \cite[004Z]{stacks-project}. 
	
	(f) By (e) it is enough to show that $\Hgn$ is the dense in $\bHgn$. Let $\mc{U}\c \bHgn$ be a non-empty open substack. By (a), the image of $\mc{U}$ under $\bHgn\to \bHg$ is open and non-empty, hence it intersects $\Hg$. It follows that $\mc{U}$ intersects $\Hgn^{rt}$. By (e), $\Hgn^{rt}$ is irreducible. Since $\Hgn$ is open in $\Hgn^{rt}$, we conclude that $\mc{U}$ intersects $\Hgn$ non-trivially. This shows that $\Hgn$ is the dense in $\bHgn$, as desired.
\end{proof}

If $\mc{X}$ is a topological stack, the singular (Betti) homology and cohomology of $\mc{X}$ are defined; see \cite[Definition 33]{behrend2004cohomology}. 
We briefly sketch the definition, referring the reader to \cite[p. 22, p. 27-28]{behrend2004cohomology} for the details. One fixes a groupoid presentation of $\mc{X}$, takes the associated simplicial nerve, and constructs a double complex by applying the singular chain complex functor in each degree. The total complex $C_{\bullet}(\mc{X})$ associated to this double complex is, by definition, the singular chain complex associated to the groupoid presentation. By definition, the homology of $\mc{X}$ is the homology of $C_{\bullet}(\mc{X})$, and the cohomology of $\mc{X}$ is the homology of the dual complex of $C_{\bullet}(\mc{X})$. 

In this paper, we will be exclusively interested in cohomology with rational coefficients of $\mc{X}$, which we denote by $H^{\bullet}(\mc{X},\Q)$.

\begin{prop}\label{coarsepic}
	 Let $\mc{X}$ be a Deligne-Mumford stack with coarse moduli space $\pi:\mc{X}\to X$.
	\begin{enumerate}[label=(\alph*)]
		\item The induced map $\pi^*:H^{\bullet}(X,\Q)\to H^{\bullet}(\mc{X},\Q)$ is an isomorphism.
		\item If $\mc{X}$ is pure-dimensional, the induced map $\pi^*:\on{Cl}(X)_{\Q}\to \on{Cl}(\mc{X})_{\Q}$ is an isomorphism.
		\item Let $\mc{X}$ be one of $\mc{X}=\Mgn,\bMgn,\Hgn$. Then the induced maps $\pi^*:\on{Pic}(X)_{\Q}\to \on{Pic}(\mc{X})_{\Q}$, are isomorphisms.
	\end{enumerate}
\end{prop}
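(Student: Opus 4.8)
The plan is to prove the three parts in order, deducing (c) from (a), (b) and smoothness. The unifying principle is that a coarse moduli map of a Deligne-Mumford stack is a universal homeomorphism whose ``fibers'' are classifying stacks $BG_x$ of the finite stabilizers, so that rationally it sees neither the stacky structure in cohomology nor the ramification in divisor classes.

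For (a), I would reduce to étale-local quotient presentations: $\mc{X}$ is étale-locally $[U/G]$ with $U$ a scheme and $G$ a finite group, and $X$ is correspondingly $U/G$. With $\Q$-coefficients $H^\bullet([U/G],\Q)\cong H^\bullet(U,\Q)^G$, since higher cohomology of a finite group with rational coefficients vanishes and the Cartan--Leray spectral sequence for $U\to[U/G]$ degenerates; classically $H^\bullet(U/G,\Q)\cong H^\bullet(U,\Q)^G$ as well, so $\pi^*$ is an isomorphism locally. Globally, the fibers of $\pi$ are the classifying stacks $BG_x$, which have trivial reduced rational cohomology, so in Behrend's formalism \cite{behrend2004cohomology} one has $R^q\pi_*\Q=0$ for $q>0$ and $R^0\pi_*\Q=\Q$; the Leray spectral sequence for $\pi$ then degenerates and yields the isomorphism $\pi^*\colon H^\bullet(X,\Q)\xrightarrow{\sim}H^\bullet(\mc X,\Q)$.

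For (b), since $\pi$ is the coarse moduli map of a Deligne-Mumford stack it is proper, quasi-finite, and a homeomorphism on underlying topological spaces, hence preserves codimension. Therefore the codimension-one integral closed substacks of $\mc X$ correspond bijectively to the prime divisors of $X$, and $\pi$ induces an isomorphism of function fields $K(\mc X)\cong K(X)$ because the generic stabilizer acts trivially on rational functions. For a prime divisor $D\c X$ let $\tilde D$ be the unique prime divisor of $\mc X$ above it and $e_D\geq 1$ its ramification index, computed on a local quotient chart; then $\mathrm{ord}_{\tilde D}(f)=e_D\,\mathrm{ord}_D(f)$ for every $f\in K(X)=K(\mc X)$. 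Setting $\pi^*D=e_D\tilde D$ gives $\pi^*(\div_X f)=\div_{\mc X}f$, so $\pi^*$ carries principal divisors isomorphically onto principal divisors; as the map $D\mapsto e_D\tilde D$ is an isomorphism $\on{Div}(X)_\Q\xrightarrow{\sim}\on{Div}(\mc X)_\Q$ after tensoring with $\Q$, it descends to an isomorphism $\on{Cl}(X)_\Q\xrightarrow{\sim}\on{Cl}(\mc X)_\Q$.

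For (c), each of $\Mgn,\bMgn,\Hgn$ is a smooth Deligne-Mumford stack (smoothness of $\Hgn$ is \Cref{h-intro}(e)), so $\on{Pic}(\mc X)_\Q=\on{Cl}(\mc X)_\Q$. Its coarse space $X$ then has only finite-quotient singularities, hence is normal and $\Q$-factorial, so the cycle map $\on{Pic}(X)_\Q\to\on{Cl}(X)_\Q$ is an isomorphism. Composing with the isomorphism $\on{Cl}(X)_\Q\cong\on{Cl}(\mc X)_\Q$ of part (b) and the identification $\on{Cl}(\mc X)_\Q=\on{Pic}(\mc X)_\Q$ exhibits $\pi^*\colon\on{Pic}(X)_\Q\to\on{Pic}(\mc X)_\Q$ as an isomorphism. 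I expect the genuinely technical point to be (a): setting up the Leray spectral sequence for the map of topological stacks $\pi\colon\mc X\to X$ within Behrend's cohomological formalism and verifying the vanishing of the higher direct images by identifying the rational cohomology of the fibers $BG_x$. Once (a) and the divisor bookkeeping in (b) are in place, part (c) is formal, the only further input being the standard fact that finite-quotient singularities are $\Q$-factorial.
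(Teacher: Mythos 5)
Your proposal is correct in outline, but it takes a different route from the paper: the paper proves all three parts purely by citation --- part (a) is \cite[Proposition 36]{behrend2004cohomology}, part (b) is a special case of \cite[Proposition 6.1]{vistoli1989intersection}, and part (c) is \cite[Lemma XIII.6.6]{arbarello2011geometry} (or \cite[Proposition 3.88]{harris2006moduli}) --- whereas you reconstruct proofs from scratch. For (a) and (b) your arguments are essentially sketches of the cited results themselves, resting on the same mechanism (finite stabilizers are invisible rationally, both for cohomology of $BG_x$ fibers and for ramification indices $e_D$ inverted over $\Q$); note that Vistoli's statement is actually formulated via the proper pushforward $\pi_*$ on Chow groups rather than your $\pi^*D = e_D\tilde{D}$, and your version needs some care with the multiplicativity $\mathrm{ord}_{\tilde D}(f)=e_D\,\mathrm{ord}_D(f)$ when $\mc{X}$ is merely pure-dimensional (possibly non-normal), where orders of vanishing must be defined via lengths. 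For (c) your argument is genuinely different from the cited proofs: the references show directly that a sufficiently high power of any line bundle on the stack descends to the coarse space (the stabilizers act on fibers through finite groups, so the action on a power is trivial), while you deduce (c) from (b) together with smoothness of the stacks (giving $\on{Pic}(\mc{X})_{\Q}\cong\on{Cl}(\mc{X})_{\Q}$) and the standard fact that finite-quotient singularities are normal and $\Q$-factorial (giving $\on{Pic}(X)_{\Q}\cong\on{Cl}(X)_{\Q}$). This buys you a uniform treatment hanging off part (b), at the cost of two extra standard inputs and one compatibility you should not leave implicit: that the composite $\on{Pic}(X)_{\Q}\to\on{Cl}(X)_{\Q}\to\on{Cl}(\mc{X})_{\Q}\cong\on{Pic}(\mc{X})_{\Q}$ really is the pullback $\pi^*$, i.e.\ that the cycle-class maps commute with your $e_D$-weighted pullback of divisors (a routine local check on quotient charts, since Cartier divisors are locally principal). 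The descent-of-powers argument is more self-contained and works integrally up to torsion; yours is shorter given (b) but, like the paper's appeal to the literature, ultimately shifts the technical weight onto known results.
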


\begin{proof}
	(a) See \cite[Proposition 36]{behrend2004cohomology}. 
	
	(b) This is a particular case of \cite[Proposition 6.1]{vistoli1989intersection}.
	
	(c) See \cite[Lemma XIII.6.6]{arbarello2011geometry} for a proof in the case of $\Mgn$ and $\bMgn$, the case of $\Hgn$ being entirely analogous (another reference is \cite[Proposition 3.88]{harris2006moduli}).
\end{proof}

\section{Cohomology of fibers of the forgetful morphism}
We will understand the low dimensional cohomology of $\Hgn$ by relating it to the cohomology of $\Hg$ via the Leray spectral sequence for the forgetful morphism $\pi_n:\Hgn\to \Hg$. In order to do so, we must first understand the cohomology of the fibers of $\pi_n$. By \Cref{h-intro}, the map $\pi_n$ is smooth, and for every point $[C]\in\Hg$, the fiber $\pi_n^{-1}([C])$ is the ordered configuration space $F(C,n)$.

Let $n,r\geq 0$ and $X$ be an oriented manifold of dimension $r$. Let $\on{pr}_i:X^n\to X$ be the projection onto the $i$-th factor, and $\on{pr}_{ij}:X^n\to X^2$ be the projection onto the $i$-th and $j$-th factors. Denote by $\Delta$ the class of the Poincar\'e dual of the diagonal in $H^r(X^2,\Q)$. Let $(E_X(n),d)$ be the differential bigraded commutative algebra given by $H^{\bullet}(X^n,\Q)[G_{ij}]_{1\leq i,j\leq n}$ where elements of $H^i(X^n,\Q)$ have degree $(i,0)$ and the variables $G_{ij}$ have degree $(0,r-1)$, subject to the following relations:
\begin{itemize}
	\item $G_{ii}=0$,
	\item $G^2_{ij}=0$,
	\item $G_{ij}=(-1)^rG_{ji}$,
	\item $G_{ij}G_{ik}+G_{jk}G_{ji}+G_{ki}G_{kj}=0$,
	\item $G_{ij}(\on{pr}_i^*\alpha-\on{pr}_j^*\alpha)=0$,
\end{itemize}
for any $i,j$ and for any $\alpha\in H^{\bullet}(X^n,\Q)$. The differential is defined by $d(G_{ij})=\on{pr}^*_{ij}(\Delta)$ and $d(\alpha)=0$ for $\alpha\in H^{\bullet}(X^n,\Q)$.

\begin{prop}\label{totaro}(Totaro) Let $X$ be an oriented manifold of dimension $r$. Then the differential bigraded algebra $E_X(n)$ is isomorphic to the $E_r$-page of the Leray spectral sequence for $F(X,n)\hookrightarrow X^n$. The $E_r$-page is the first page of the spectral sequence with non-trivial differentials.
\end{prop}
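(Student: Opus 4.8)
The plan is to analyze the Leray spectral sequence for the open inclusion $i\colon F(X,n)\hookrightarrow X^n$, whose $E_2$-page is $E_2^{p,q}=H^p(X^n,R^qi_*\Q)$ and which converges to $H^{p+q}(F(X,n),\Q)$. The first and main task is to compute the higher direct image sheaves $R^qi_*\Q$. Since $F(X,n)$ is the complement of the arrangement $\bigcup_{i<j}\Delta_{ij}$ of pairwise diagonals $\Delta_{ij}=\set{z_i=z_j}$, each a smooth submanifold of $X^n$ of real codimension $r$, I would compute $R^qi_*\Q$ locally. Near a point lying on exactly the diagonals indexed by a set $S$ of pairs, the pair $(X^n,F(X,n))$ is modeled, via a tubular neighbourhood, on the product of the stratum with the complement of the corresponding linear subspace arrangement in the normal $\R^{r|S|}$; the cohomology of the link of such an arrangement is the Orlik--Solomon algebra, which is generated in degree $r-1$ and concentrated in degrees that are multiples of $r-1$. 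Consequently $R^qi_*\Q$ vanishes unless $r-1\mid q$.

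The second step is to identify the $E_2$-page as a bigraded algebra. The bottom row $q=0$ is $H^\bullet(X^n,\Q)$, over which the whole page is a module; this accounts for the factor $H^\bullet(X^n,\Q)$ and for the relation $G_{ij}(\on{pr}_i^*\alpha-\on{pr}_j^*\alpha)=0$, since on $\Delta_{ij}$ the $i$-th and $j$-th coordinates agree, so the two pullbacks become equal after restriction to the support of $G_{ij}$. The class $G_{ij}\in E_2^{0,r-1}$ is the image of the fundamental (residue) class of the normal link sphere $S^{r-1}$ of $\Delta_{ij}$. The remaining relations are read off from the local model: $G_{ii}=0$ because there is no diagonal; $G_{ij}=(-1)^rG_{ji}$ because swapping the $i$-th and $j$-th coordinates acts on the normal direction $z_i-z_j$ by $-1$, hence by $(-1)^r$ on the orientation of $S^{r-1}$; $G_{ij}^2=0$ because $H^\bullet(S^{r-1})$ has no class in degree $2(r-1)$; and the three-term relation $G_{ij}G_{ik}+G_{jk}G_{ji}+G_{ki}G_{kj}=0$ is precisely the Orlik--Solomon relation attached to the triple intersection $\Delta_{ij}\cap\Delta_{ik}\cap\Delta_{jk}$. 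That these generators and relations give a complete presentation is the Cohen--Taylor/Orlik--Solomon computation of the cohomology of the diagonal arrangement, furnishing the algebra isomorphism $E_X(n)\cong E_2$ in the bottom two nonzero rows, and then in all rows by multiplicativity.

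Finally I would prove the degeneration statement and compute the differential. Because the nonzero rows are exactly those with $r-1\mid q$, the differential $d_s\colon E_s^{p,q}\to E_s^{p+s,q-s+1}$ lowers $q$ by $s-1$, so it can be nonzero only when $r-1\mid s-1$; the smallest $s>1$ with this property is $s=r$. Hence $E_2=E_3=\cdots=E_r$ and $d_r$ is the first possibly nonzero differential, as asserted. On the bottom row $d_r$ vanishes, since those classes are pulled back from $X^n$ and hence are permanent cycles, so it remains to evaluate $d_r$ on the generators $G_{ij}$. The class $G_{ij}$ transgresses, and by the Thom--Gysin isomorphism its transgression is the Euler class, i.e. the Poincar\'e dual of the codimension-$r$ submanifold $\Delta_{ij}$. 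Since $\Delta_{ij}=\on{pr}_{ij}^{-1}(\text{diagonal of }X^2)$, this dual equals $\on{pr}_{ij}^*(\Delta)$, matching the differential of $E_X(n)$ and completing the isomorphism of differential bigraded algebras.

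The main obstacle is the local computation underlying the first two steps: proving that the link cohomology of the diagonal arrangement is the Orlik--Solomon algebra with exactly the listed generators and relations (no further relations, and concentration in degrees divisible by $r-1$), and checking that this stalkwise description glues into the sheaves $R^qi_*\Q$ together with the claimed multiplicative structure on their global sections. Once the $E_2$-page is pinned down as $E_X(n)$, the spacing/degeneration argument and the transgression computation of $d_r$ are comparatively formal.
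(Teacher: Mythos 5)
Your proposal is correct in outline, but it is doing something genuinely different from the paper: the paper's entire proof of this proposition is the citation \cite[Theorem 1, Theorem 2]{totaro1996configuration}, whereas you have attempted to reconstruct Totaro's argument itself. Your reconstruction does follow the route of that paper: compute the sheaves $R^qi_*\Q$ from local models around the diagonal strata, identify the $E_2$-page with $E_X(n)$, observe that $E_2=\dots=E_r$ comes for free because the nonzero rows sit in degrees divisible by $r-1$ (so $d_s$ can be nonzero only when $s\equiv 1 \bmod (r-1)$), and evaluate $d_r(G_{ij})$ as a transgression equal to the Poincar\'e dual of the diagonal, i.e.\ $\on{pr}_{ij}^*(\Delta)$. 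Two imprecisions in your sketch are worth flagging: (i) the local arrangement lives in the normal space to the stratum, whose dimension is $r$ times the number of \emph{independent} collisions (number of colliding points minus number of blocks), not $r|S|$ --- e.g.\ the small diagonal $z_1=z_2=z_3$ lies on three pairwise diagonals but has codimension $2r$; (ii) what you call the Orlik--Solomon algebra is, for these codimension-$r$ diagonal \emph{subspace} (not hyperplane) arrangements, really F.\ Cohen's computation of $H^{\bullet}(F(\R^r,k),\Q)$ (equivalently an instance of the Goresky--MacPherson formula), generated in degree $r-1$ with the Arnold three-term relations. As for what each approach buys: yours supplies an actual proof sketch where the paper has none, but the step you honestly flag as the ``main obstacle'' --- that the listed relations give a \emph{complete} presentation of the $E_2$-page and that the stalkwise description glues multiplicatively --- is precisely the technical content of Totaro's Theorem 1, which is why the paper can, and does, simply quote it.
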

\begin{proof}
	\cite[Theorem 1, Theorem 2]{totaro1996configuration}.
\end{proof}

\begin{prop}(Totaro)\label{cohoconf}
	If $X$ is a smooth projective variety over the complex numbers, every differential in the Leray spectral sequence for $F(X,n)\hookrightarrow X^n$ except $d_r$ vanishes, and the rational cohomology of $F(X,n)$ is the cohomology of $(E_X(n),d)$. 
\end{prop}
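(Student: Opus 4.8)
The plan is to leverage \Cref{totaro}. By that proposition, the $E_r$-page of the Leray spectral sequence for $F(X,n)\hookrightarrow X^n$ is isomorphic to $(E_X(n),d)$, and no differential $d_s$ with $s<r$ is nonzero. It therefore remains only to prove that every differential $d_s$ with $s>r$ vanishes; granting this, the spectral sequence degenerates at the $E_{r+1}$-page, so $E_{r+1}=E_\infty$ computes the associated graded of $H^\bullet(F(X,n),\Q)$ for the Leray filtration, and since we work with $\Q$-coefficients this identifies $H^\bullet(F(X,n),\Q)$ with the cohomology of $(E_X(n),d)$.

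The vanishing of the higher differentials I would establish by a weight argument, using that $X$ is smooth projective. Write $r=2d$ with $d=\dim_{\C}X$. Since $F(X,n)$ is the complement of the diagonals in the smooth projective variety $X^n$, the relevant Leray spectral sequence underlies mixed Hodge structures, so every differential is a morphism of mixed Hodge structures and in particular preserves weights. I would then compute the weights on the $E_r$-page. A class in $H^p(X^n,\Q)$ is pure of weight $p$ because $X^n$ is smooth projective, while each generator $G_{ij}$, being (up to a Tate twist) the residue class of the codimension-$d$ diagonal $\set{z_i=z_j}$, has cohomological degree $r-1=2d-1$ but weight $2d=r$. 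As cup product respects weights, a monomial $\alpha\,G_{i_1j_1}\cdots G_{i_mj_m}$ with $\alpha\in H^p(X^n,\Q)$ contributes to bidegree $(p,q)$ with $q=m(r-1)$ and is pure of weight $p+2dm=p+\tfrac{r}{r-1}q$. Thus $E_r^{p,q}$ is pure of weight $p+\tfrac{r}{r-1}q$ (and vanishes unless $(r-1)\mid q$), and the same purity is inherited by every subquotient, hence by every later page $E_s^{p,q}$.

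The differential $d_s\colon E_s^{p,q}\to E_s^{p+s,q+1-s}$ preserves weight, so whenever its target is nonzero (in which case $(r-1)\mid(q+1-s)$ and the purity formula applies there as well), equating the weight of the source with that of the target gives
\[
p+\frac{r}{r-1}q=(p+s)+\frac{r}{r-1}(q+1-s),
\]
which simplifies to $\tfrac{r-s}{r-1}=0$, forcing $s=r$. Hence $d_s=0$ for all $s\neq r$, the spectral sequence degenerates at $E_{r+1}$, and the conclusion follows as explained above.

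The part that carries the real content — and where I expect the main difficulty to lie — is the input that the Leray spectral sequence is compatible with mixed Hodge structures carrying the stated weight on the generators $G_{ij}$. Identifying $G_{ij}$ with the Tate-twisted residue of the diagonal, so that its weight exceeds its degree by exactly one, is precisely the geometric fact that makes the numerology work; this is the crux of Totaro's argument and can be cited from \cite[Theorem 2]{totaro1996configuration}, the remainder being the elementary weight bookkeeping above.
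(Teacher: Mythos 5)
Your proposal is correct, but note that the paper's own ``proof'' of this proposition is nothing more than a citation of \cite[Theorem 3]{totaro1996configuration}; what you have written out is, in essence, Totaro's own argument for that theorem: the Leray spectral sequence of $F(X,n)\hookrightarrow X^n$ underlies mixed Hodge structures, $E_r^{p,q}$ is pure of weight $p+\tfrac{r}{r-1}q$ because the $G_{ij}$ have weight $r$ in degree $r-1$, and strictness of morphisms of Hodge structures then kills every differential except $d_r$. The one point to keep straight is that degeneration at $E_{r+1}$ identifies the cohomology of $(E_X(n),d)$ only with the associated graded of the Leray filtration on $H^{\bullet}(F(X,n),\Q)$; your remark that the extension problem is trivial over $\Q$ settles the additive identification, which is all this paper ever uses, whereas Totaro's Theorem 3 also yields the ring isomorphism via a further multiplicative weight argument. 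Since the heavy input (MHS-compatibility of the spectral sequence and the weight of the $G_{ij}$) is itself deferred to Totaro, your write-up and the paper's bare citation ultimately carry the same mathematical content.
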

\begin{proof}	
	\cite[Theorem 3]{totaro1996configuration}.
\end{proof}

We will apply these results to the case where $X=C$ is a smooth hyperelliptic curve of genus $g$ (so $r=2$). By the K\"unneth Formula: \begin{equation}\label{kunneth}H^2(C^n,\Q)\cong H^2(C,\Q)^{\oplus n}\oplus (H^1(C,\Q)^{\otimes 2})^{\oplus \frac{n(n-1)}{2}}.\end{equation}
	Fix a point $p$ of $C$. For $i=1,\dots,n$, let $\gamma_i:=\on{pr}_i^*([p])$, where $[p]\in H^2(C,\Q)$ is the Poincar\'e dual of the class of $p$ in $H_0(C,\Q)$. In (\ref{kunneth}), the homomorphisms $H^2(C,\Q)\hookrightarrow H^2(C^n,\Q)$ are induced by the projections $\on{pr}_i:C^n\to C$. It follows that under the isomorphism (\ref{kunneth}), $\gamma_i$ is a generator for the $i$-th summand $H^2(C,\Q)$ inside $H^2(C^n,\Q)$. 
	
Denote by $\Delta'$ the projection to $H^1(C,\Q)^{\otimes 2}$ of $\Delta$ with respect to the K\"unneth decomposition
\begin{equation}\label{kunneth'}H^2(C^2,\Q)\cong H^0(C,Q)^{\oplus 2}\oplus H^1(C,\Q)^{\otimes 2},\end{equation} and set $\gamma_{ij}:=\on{pr}_{ij}^*(\Delta')$.
The inclusions $H^1(C,\Q)^{\otimes 2}\hookrightarrow H^2(C^n,\Q)$ factor as \[H^1(C,\Q)^{\otimes 2}\to H^2(C^2,\Q)\to H^2(C^n,\Q),\] where the maps on the left are cross products, and the maps on the right are induced by the projections $\on{pr}_{ij}:C^n\to C^2$; see \cite[Appendix 3.3.B]{hatcher2002algebraic}.

\begin{lemma}\label{gammaij}
	\begin{enumerate}[label=(\alph*)]
		\item The $\gamma_i$ and the $\gamma_{ij}$ generate the same subspace as the $\gamma_i$ and $\on{pr}_{ij}^*(\Delta)$.
		\item The $\gamma_i$ and the $\on{pr}_{ij}^*(\Delta)$ (for $i<j$) are linearly independent.
	\end{enumerate}
\end{lemma}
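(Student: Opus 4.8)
The crux of both statements is a single identity,
\[
\on{pr}_{ij}^*(\Delta) = \gamma_i + \gamma_j + \gamma_{ij} \qquad (i\neq j),
\]
so the plan is to establish this first and deduce everything formally from it. To obtain it I would decompose the diagonal class $\Delta\in H^2(C^2,\Q)$ according to the K\"unneth splitting (\ref{kunneth'}), writing $\Delta=(\omega\times 1)+\Delta'+(1\times\omega)$, where $\omega:=[p]\in H^2(C,\Q)$, the two outer terms are the components in the $H^0(C,\Q)^{\oplus 2}$ factors, and $\Delta'$ is by definition the component in $H^1(C,\Q)^{\otimes 2}$. Since $\omega\times 1=\on{pr}_1^*(\omega)$ and $\on{pr}_1\circ\on{pr}_{ij}=\on{pr}_i$, pulling back gives $\on{pr}_{ij}^*(\omega\times 1)=\on{pr}_i^*(\omega)=\gamma_i$, and symmetrically $\on{pr}_{ij}^*(1\times\omega)=\gamma_j$; by definition $\on{pr}_{ij}^*(\Delta')=\gamma_{ij}$. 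Summing the three pullbacks yields the displayed identity.

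Part (a) is then immediate: the identity writes each $\on{pr}_{ij}^*(\Delta)$ as a $\Q$-linear combination of $\gamma_i,\gamma_j,\gamma_{ij}$, while conversely $\gamma_{ij}=\on{pr}_{ij}^*(\Delta)-\gamma_i-\gamma_j$, so the two families span the same subspace of $H^2(C^n,\Q)$.

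For part (b) I would work inside the global K\"unneth decomposition (\ref{kunneth}), in which the $i$-th copy of $H^2(C,\Q)$ is the image of $\on{pr}_i^*$ and the copy of $H^1(C,\Q)^{\otimes 2}$ indexed by $\{i,j\}$ is the image of $\on{pr}_{ij}^*$ on the middle K\"unneth factor. Under this splitting $\gamma_i$ generates the $i$-th $H^2$-summand, as already observed in the text, while $\gamma_{ij}$ lies in the $\{i,j\}$-summand and equals $\Delta'$ placed there (the restriction of $\on{pr}_{ij}^*$ to the middle factor is an isomorphism onto that summand). The essential point is that $\gamma_{ij}\neq 0$: fixing a symplectic basis $a_1,\dots,a_g,b_1,\dots,b_g$ of $H^1(C,\Q)$, the middle component of the diagonal is $\Delta'=\pm\sum_{k=1}^g(a_k\otimes b_k-b_k\otimes a_k)$, which is nonzero because $g\geq 2$ (equivalently, $\Delta'$ corresponds to the identity under $H^1(C,\Q)^{\otimes 2}\cong\on{End}(H^1(C,\Q))$ via Poincar\'e duality). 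Now suppose $\sum_i c_i\gamma_i+\sum_{i<j}d_{ij}\on{pr}_{ij}^*(\Delta)=0$; substituting the identity from (a) and projecting onto the $\{i,j\}$-summand gives $d_{ij}\gamma_{ij}=0$, hence $d_{ij}=0$ for all $i<j$. The relation then collapses to $\sum_i c_i\gamma_i=0$, and projecting onto each $H^2$-summand forces $c_i=0$. This proves linear independence.

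The only genuine geometric input is the non-vanishing of $\Delta'$, i.e.\ that the middle K\"unneth component of the diagonal class of a positive-genus curve is nonzero; everything else is bookkeeping with the K\"unneth isomorphism and its naturality under the projections. I expect the main care to go into fixing the sign and normalization conventions in $\Delta=(\omega\times 1)+\Delta'+(1\times\omega)$ so that the pullback identities come out with coefficient exactly $+1$ as stated.
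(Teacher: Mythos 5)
Your proof is correct and takes essentially the same route as the paper: both use the K\"unneth decomposition of $H^2(C^2,\Q)$ to see that $\on{pr}_{ij}^*(\Delta)$ differs from $\gamma_{ij}$ by a linear combination of $\gamma_i$ and $\gamma_j$, and then deduce independence from the fact that the $\gamma_i$ and the $\gamma_{ij}$ lie in distinct summands of the K\"unneth decomposition of $H^2(C^n,\Q)$. The only difference is one of explicitness: you write out the exact identity $\on{pr}_{ij}^*(\Delta)=\gamma_i+\gamma_j+\gamma_{ij}$ and verify the nonvanishing of $\Delta'$ (needed for independence), both of which the paper leaves implicit.
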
 

\begin{proof}
	(a) With respect to (\ref{kunneth'}) we have $\Delta-\Delta'\in H^2(C,\Q)^{\oplus 2}$, therefore $\on{pr}_{ij}^*(\Delta)-\gamma_{ij}$ is a linear combination of $\gamma_i$ and $\gamma_j$.
	
	(b) By what we have said above, under the isomorphism (\ref{kunneth}) each summand $H^2(C,\Q)$ contains exactly one of the $\gamma_i$, and each $H^1(C,\Q)^{\otimes 2}$ contains exactly one of the $\gamma_{ij}$, therefore the $\gamma_i$ and $\gamma_{ij}$ are linearly independent. Now (b) follows from (a).
\end{proof}
	
\begin{lemma}\label{differential}
	Let $C$ be a smooth curve, and consider the Leray spectral sequence for $\iota:F(C,n)\hookrightarrow C^n$. 
	\begin{enumerate}[label=(\alph*)]
	\item\label{differential1} A basis for the image of $d_2^{0,1}$ is given by the classes $\on{pr}_{ij}^*(\Delta)$ in $H^2(C^n,\Q)$, where $1\leq i<j\leq n$.
	\item\label{differential2} The differentials $d_2^{0,1}$ and $d_2^{0,2}$ are injective. 
	\end{enumerate}	
\end{lemma}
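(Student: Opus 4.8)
The plan is to analyze the Leray spectral sequence for $\iota:F(C,n)\hookrightarrow C^n$ using Totaro's description from \Cref{totaro}, where the $E_2$-page ($=E_r$ with $r=2$) is the bigraded algebra $E_C(n)$. The key is that the differential $d_2$ is given by $d_2(G_{ij})=\on{pr}_{ij}^*(\Delta)$, so the images of the relevant differentials are spanned by these diagonal classes, and the content of the lemma reduces to linear independence facts about $H^2(C^n,\Q)$ that are already packaged in \Cref{gammaij}.

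For part \ref{differential1}, I would first identify the source and target of $d_2^{0,1}$. The $(0,1)$-entry of $E_2$ consists of degree-$(0,1)$ elements of $E_C(n)$, which by Totaro's grading are spanned by the variables $G_{ij}$ (since each $G_{ij}$ has bidegree $(0,r-1)=(0,1)$), subject to the relations $G_{ii}=0$ and $G_{ij}=(-1)^rG_{ji}=G_{ji}$; thus a basis of $E_2^{0,1}$ is $\{G_{ij}:1\leq i<j\leq n\}$. The target $E_2^{2,0}$ is $H^2(C^n,\Q)$. By definition $d_2^{0,1}(G_{ij})=\on{pr}_{ij}^*(\Delta)$, so the image is spanned by the classes $\on{pr}_{ij}^*(\Delta)$ with $i<j$. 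To conclude these form a \emph{basis} of the image, I must show they are linearly independent, which is exactly \Cref{gammaij}(b).

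For part \ref{differential2}, injectivity of $d_2^{0,1}$ is then immediate: it sends the basis $\{G_{ij}\}_{i<j}$ to the linearly independent set $\{\on{pr}_{ij}^*(\Delta)\}_{i<j}$, so it has trivial kernel. For $d_2^{0,2}$ I would examine the $(0,2)$-strip of $E_C(n)$, whose elements are $\Q$-combinations of products $G_{ij}G_{kl}$ (bidegree $(0,2)$), and compute $d_2^{0,2}$ via the Leibniz rule: $d_2(G_{ij}G_{kl})=\on{pr}_{ij}^*(\Delta)\,G_{kl}-G_{ij}\,\on{pr}_{kl}^*(\Delta)$, landing in $E_2^{2,1}=H^2(C^n,\Q)\otimes E_2^{0,1}$. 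Injectivity should follow from the injectivity of $d_2^{0,1}$ together with the fact that $E_2$ is a free module over the exterior-type algebra generated by the $G_{ij}$ modulo the stated relations; the point is that $d_2$ is a derivation whose restriction to the generators $G_{ij}$ is injective into a space ($H^2(C^n,\Q)$) that acts faithfully.

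The main obstacle I expect is the careful bookkeeping in part \ref{differential2} for $d_2^{0,2}$: one must respect the Arnold relation $G_{ij}G_{ik}+G_{jk}G_{ji}+G_{ki}G_{kj}=0$ and the relation $G_{ij}(\on{pr}_i^*\alpha-\on{pr}_j^*\alpha)=0$ when choosing a basis for $E_2^{0,2}$ and when verifying that no nonzero element maps to zero. The cleanest route is probably to reduce to $n=2$ and $n=3$ (where the relations are explicit and finite in number) and use the multiplicative structure, rather than attempting a direct dimension count for general $n$; the injectivity of $d_2^{0,1}$, already established via \Cref{gammaij}, provides the leverage needed to propagate injectivity into higher $q$-degree.
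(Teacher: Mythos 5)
Your part \ref{differential1} and the injectivity of $d_2^{0,1}$ are correct and follow the same route as the paper: identify $E_2^{0,1}$ with the span of the $G_{ij}$ ($i<j$, using $G_{ij}=G_{ji}$ since $r=2$), note that $d_2(G_{ij})=\on{pr}_{ij}^*(\Delta)$, and invoke \Cref{gammaij}(b) for linear independence of the images.

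The treatment of $d_2^{0,2}$, however, has a genuine gap. Your key claim --- that $E_C(n)$ is a free module over the ``exterior-type algebra'' generated by the $G_{ij}$, so that injectivity of the derivation on generators propagates to products --- is false: the Arnold relation $G_{ij}G_{ik}+G_{jk}G_{ji}+G_{ki}G_{kj}=0$ and the relation $G_{ij}(\on{pr}_i^*\alpha-\on{pr}_j^*\alpha)=0$ are precisely obstructions to freeness, and in a non-free algebra a derivation that is injective on generators need not be injective in higher degrees (a relation in degree $(0,2)$ could a priori map a nonzero class to zero). Your fallback, reducing to $n=2$ and $n=3$, also does not work: a general element of $E_2^{0,2}$ is a sum $\sum a_{ijhk}G_{ij}G_{hk}+\sum b_{ijk}G_{ij}G_{ik}$ involving products with four distinct indices and arbitrarily many index pairs at once, and there is no evident map of spectral sequences that isolates such terms into configuration spaces of $3$ points; moreover multiplicativity alone cannot separate the cross terms. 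What the paper does instead is exactly the bookkeeping you deferred: it fixes an explicit basis of $E_C(n)^{0,2}$ adapted to the Arnold relations (the $G_{ij}G_{hk}$ with $i<j$, $i<h<k$, together with the $G_{ij}G_{ik}$ with $i<j$, $i\neq k$), computes $d_2$ on this basis by the Leibniz rule, and then, for each fixed pair $p<q$, projects $d_2(G)$ onto the summand $H^2(C^n,\Q)\otimes\ang{G_{pq}}$ of $E_C(n)^{2,1}$. The index constraints on the chosen basis guarantee that this projection is an \emph{irredundant} linear combination of the classes $\on{pr}_{uv}^*(\Delta)$ ($u<v$), which are linearly independent by \Cref{gammaij}(b); hence all coefficients $a_{pqhk}$, $b_{pqk}$ vanish, and letting $p,q$ vary gives $G=0$. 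That basis choice and projection argument is the actual content of part \ref{differential2}, and it is missing from your proposal.
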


\begin{proof}
	By \Cref{totaro}, we may identify the $E_2$-page of the Leray spectral sequence for $\iota$ with $(E_C(n),d)$. 
	
	By definition, $E_C(n)^{0,1}$ is the $\Q$-vector space with basis $G_{ij}$, where $1\leq i<j\leq n$ (recall that $G_{ij}=G_{ji}$, since $r=2$ is even). Moreover, $d^{0,1}_2(G_{ij})=\on{pr}_{ij}^*(\Delta)$. The $G_{ij}$ (for $i<j$) form a basis of $E_C(n)^{0,1}$, and by \Cref{gammaij}(b) the $\on{pr}_{ij}^*(\Delta)$ are linearly independent (for $i<j$). We deduce that the differential $d^{0,1}_2$ is injective and that the $\on{pr}_{ij}^*(\Delta)$ form a basis of the image of $d^{0,1}_2$. By \Cref{totaro}, this proves (a) and the injectivity of $d_2^{0,1}$.
	
	We now prove that $d_2^{0,2}$ is injective. The vector space $E_C(n)^{0,2}$ has a basis consisting of the $G_{ij}G_{hk}$, where $i<j$, and $i<h<k$, together with the $G_{ij}G_{ik}$, where $i<j$ and $i\neq k$. We identify $E_C(n)^{2,1}$ with $H^2(C^n,\Q)\otimes\ang{G_{ij}}_{1\leq i<j\leq n}$. For every $i,j,h,k$, we have \[d_2^{0,2}(G_{ij}G_{hk})=d_2^{0,1}(G_{ij})G_{hk}-G_{ij}d_2^{0,1}(G_{hk})=\on{pr}_{ij}^*(\Delta)\otimes G_{hk}-\on{pr}_{hk}^*(\Delta)\otimes G_{ij}.\]
	Consider an element $G=\sum a_{ijhk}G_{ij}G_{hk}+\sum b_{ijk}G_{ij}G_{ik}$, where the indices satisfy the conditions above. 
	Fix $p<q$, and consider the projection $G'_{p,q}$ of $d_2^{0,2}(G)$ to the summand $H^2(C^n,\Q)\otimes \ang{G_{pq}}\cong H^2(C^n,\Q)$. We have \begin{equation}\label{irredundant}G'_{pq}=\sum_{h,k} \epsilon_{pqhk}a_{pqhk}\on{pr}_{hk}^*(\Delta)+\sum_k \epsilon_{pqk}b_{pqk}\on{pr}_{pk}^*(\Delta)\in H^2(C^n,\Q),\end{equation} where $\epsilon_{pqkh},\epsilon_{pqk}\in \set{\pm 1}$. By \Cref{gammaij}(b), the $\on{pr}_{uv}^*(\Delta)$ (for $u<v$) are linearly independent. In the first sum of (\ref{irredundant}) we are considering pairs $(h,k)$ such that $p<h<k$, and in the second sum $k<p$. Thus, (\ref{irredundant}) is an irredundant linear combination of the $\on{pr}_{uv}^*(\Delta)$ (for $u<v$). Assume now that $d_2^{0,2}(G)=0$. Then $G'_{p,q}=0$, and so $a_{pqhk}=b_{pk}=0$. Letting $p,q$ vary, we see that $G=0$. We conclude that $d_2^{0,2}$ is injective, as desired. 
\end{proof}

A presentation of the hyperelliptic mapping class group $\Gamma_g$ was given by Birman and Hilden in \cite{birman1971mapping}. There are $2g+1$ generators $\zeta_1,\dots,\zeta_{2g+1}$. We refer the reader to \cite[\S 2.2]{tanaka2001first} for the statement of the theorem. For the convenience of the reader, we transcribe the description of the images $Z_1,\dots,Z_{2g+1}$ of the generators under the modular representation \[\rho:\Gamma_g\to \on{Sp}(2g,\Z).\] The homomorphism $\rho$ is given by considering the action of $\Gamma_g$ on $H_1(C,\Z)$, in the coordinates given by the standard symplectic basis $a_1,\dots,a_g,b_1,\dots,b_g$ for $H_1(C,\Z)$. 
Let 
\begin{equation*}
A=\begin{pmatrix} 1 & 1 \\ 0 & 1 \end{pmatrix}\qquad  B=\begin{pmatrix} 1 & 0 \\ -1 & 1 \end{pmatrix}\qquad C=\begin{pmatrix} 1 & 0 & 0 & 0 \\ -1 & 1 & 1 & 0 \\ 0 & 0 & 1 & 0 \\ 1 & 0 & -1 & 1\end{pmatrix}.
\end{equation*}
Denote by $I_m$ the $m\times m$ identity matrix. Using the standard symplectic basis $a_1,\dots,a_g,b_1,\dots,b_g$ for $H_1(C,\Z)$, we can write 
\begin{align*}
	Z_{2l}&=I_{2l-2}\oplus A\oplus I_{2g-2l}\quad (1\leq l\leq g)\\
	Z_1&= B\oplus I_{2g-2}\\
	Z_{2g+1}&=I_{2g-2}\oplus B\\
	Z_{2l+1}&=I_{2l-2}\oplus C\oplus I_{2g-2l-2}\quad (1\leq l\leq g-1) 	
\end{align*}

The group of orientation-preserving diffeomorphisms of $C$ naturally acts on $C$, so it acts on $C^n$ and on the fat diagonal of $C^n$, hence on $F(C,n)$ and therefore on $H^{\bullet}(F(C,n),\Q)$. The subgroup of diffeomorphisms that are isotopic to the identity of $C$ acts trivially  on $H^{\bullet}(F(C,n),\Q)$, hence we obtain an induced action of the mapping class group on $H^{\bullet}(F(C;n),\Q)$, and a fortiori of its subgroup $\Gamma_g$.

\begin{lemma}\label{dehncomp}
	Let $\Delta'$ be the projection of the Poincar\'e dual of the class of the diagonal in $C\times C$ to $H^1(C,\Q)^{\otimes 2}$ under the decomposition (\ref{kunneth'}). Then the $\Gamma_g$-invariant subspace of $H^1(C,\Q)^{\otimes 2}$ is exactly the one-dimensional subspace generated by $\Delta'$.
\end{lemma}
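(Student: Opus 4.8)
\emph{Plan of proof.} First I would set $W := H^1(C,\Q)$, a $2g$-dimensional $\Q$-vector space on which $\Gamma_g$ acts through the modular representation $\rho$, with image inside the symplectic group $\on{Sp}(W)=\on{Sp}(2g)$ of the cup-product form $\omega$. By the standard description of the diagonal class via Poincar\'e duality and the K\"unneth decomposition, $\Delta'$ is a nonzero scalar multiple of the canonical element $\omega\in \Lambda^2 W\subseteq W\otimes W$ corresponding to $\on{id}_W$ under the self-duality $W\cong W^*$ induced by $\omega$. Being built from the pairing, this element is fixed by every element of $\on{Sp}(W)$, hence by $\Gamma_g$, and it is nonzero; thus $\Q\Delta'\subseteq (W\otimes W)^{\Gamma_g}$. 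The entire content of the lemma is therefore the reverse inequality $\dim_\Q (W\otimes W)^{\Gamma_g}\leq 1$.

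The plan is to deduce this from the representation theory of the algebraic group $\on{Sp}(2g)$. Let $G\subseteq \on{Sp}(2g)$ be the Zariski closure of $\rho(\Gamma_g)$. Since the fixed locus of any vector in a rational representation is Zariski closed, the $\Gamma_g$-invariants coincide with the $G$-invariants; as invariants commute with the extension of scalars to $\C$, it suffices to compute $(W_\C\otimes W_\C)^{G}$. Over $\C$ the group $\on{Sp}(2g)$ is split, and $W_\C\otimes W_\C=\on{Sym}^2 W_\C\oplus \Lambda^2 W_\C$, where $\on{Sym}^2 W_\C\cong \mathfrak{sp}(2g)$ is the adjoint representation and $\Lambda^2 W_\C=\C\omega\oplus V(\omega_2)$ with $V(\omega_2)$ the primitive part. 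For $g\geq 2$ the three summands $\on{Sym}^2 W_\C$, $\C\omega$ and $V(\omega_2)$ are irreducible and only $\C\omega$ is trivial, so the space of $\on{Sp}(2g)$-invariants in $W_\C\otimes W_\C$ is exactly the line $\C\omega$. Hence everything reduces to proving that $G=\on{Sp}(2g)$, i.e. that $\rho(\Gamma_g)$ is Zariski dense.

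For this density statement I would argue at the level of Lie algebras. The subgroup $G$ is closed in the connected group $\on{Sp}(2g)$ and contains each transvection $Z_i$; as $Z_i$ is unipotent, the Zariski closure of $\langle Z_i\rangle$ is the one-parameter subgroup $t\mapsto \exp(tN_i)$ with $N_i=\log Z_i$ nilpotent, so every $N_i$ lies in $\mathfrak g:=\on{Lie}(G)$. Under the isomorphism $\mathfrak{sp}(2g)\cong \on{Sym}^2 W$ sending $v\cdot w$ to the operator $u\mapsto \omega(v,u)w+\omega(w,u)v$, the generator $N_i$ corresponds to $v_i\cdot v_i$, where $v_1,\dots,v_{2g+1}$ are the vanishing cycles of the chain underlying $Z_1,\dots,Z_{2g+1}$; these satisfy $\omega(v_i,v_{i+1})=\pm1$ and $\omega(v_i,v_j)=0$ for $|i-j|\geq 2$, and they span $W$ (the intersection form on their span has full rank $2g$). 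A short bracket computation, giving $[v\cdot v,\,w\cdot w]=4\,\omega(v,w)\,v\cdot w$ and $[v\cdot w,\,c\cdot c]=\pm 2\,\omega(w,c)\,v\cdot c$ whenever $\omega(v,c)=0$, shows that from the diagonal elements $v_i\cdot v_i$ one produces successively all adjacent products $v_i\cdot v_{i+1}$, then the distance-two products, and so on, reaching every $v_i\cdot v_j$. Since the $v_i$ span $W$, these span $\on{Sym}^2 W$, so $\mathfrak g=\mathfrak{sp}(2g)$; as $\on{Sp}(2g)$ is connected, $G=\on{Sp}(2g)$.

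The principal difficulty is exactly this last step: verifying from the explicit chain of transvections that their nilpotent logarithms Lie-generate all of $\mathfrak{sp}(2g)$. This is the symplectic incarnation of the classical fact that a chain of Dehn twists has full symplectic monodromy, and it may alternatively be extracted from A'Campo's determination of the monodromy of the hyperelliptic family. By contrast, the identification of $\Delta'$ with $\omega$ and the branching of $W\otimes W$ over $\on{Sp}(2g)$ are routine.
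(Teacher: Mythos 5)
Your proposal is correct, but it takes a genuinely different route from the paper's. The paper argues by direct computation: it writes a $\Gamma_g$-invariant element of $H^1(C,\Q)^{\otimes 2}$ as a $2g\times 2g$ matrix $P$, imposes the conditions $Z_iPZ_i^{t}=Z_i^{t}PZ_i=P$ for the explicit Birman--Hilden generators $Z_1,\dots,Z_{2g+1}$, and solves block by block (the even-index generators force $P$ to be block diagonal with $2\times 2$ blocks, and the remaining generators force every block to be the same multiple of the standard matrix $J$); the resulting line is spanned by the Poincar\'e dual of $\sum_i a_i\wedge b_i$, which is identified with $\Delta'$ by citing Milnor--Stasheff. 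You instead prove the stronger statement that $\rho(\Gamma_g)$ is Zariski dense in $\on{Sp}(2g)$ --- via the logarithms of the transvections $Z_i$ and a Lie-bracket generation argument along the chain of vanishing cycles --- and then read off the invariants from the decomposition of $W\otimes W$ into $\on{Sp}(2g)$-irreducibles, $\on{Sym}^2W\oplus\C\omega\oplus V(\omega_2)$. Your bracket formulas and the induction along the chain are correct (the needed intersection pattern $\omega(v_i,v_{i+1})=\pm1$, $\omega(v_i,v_j)=0$ for $|i-j|\geq 2$, and the spanning of $W$, can be checked from the paper's explicit matrices), so the density claim does hold; alternatively, as you note, it follows from A'Campo's theorem that the hyperelliptic monodromy has finite index in $\on{Sp}(2g,\Z)$. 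The trade-off: the paper's computation is elementary, self-contained, and needs no representation theory, but it yields only the invariants of $W^{\otimes 2}$; your argument requires more machinery (irreducibility of the adjoint representation and of $V(\omega_2)$, the closure of a unipotent cyclic group being a one-parameter subgroup) and a more careful verification of the generation step, but it buys a reusable statement that determines the $\Gamma_g$-invariants in \emph{every} tensor representation of $W$ at once.
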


\begin{proof}
 Using the standard symplectic basis $a_1,\dots,a_g,b_1,\dots,b_g$ for $H_1(C,\Z)$, we may represent an element $\alpha$ of $H_1(C,\Q)^{\otimes 2}$ as a square matrix $P$ of size $2g$. In these coordinates, if $\alpha$ is $\Gamma_g$-invariant, then $P$ satisfies \[ZPZ^t=P \text{ for every $Z\in \rho(\Gamma_g)$}.\] In particular, by considering $Z=Z_i,Z_i^{-1}$, for $i=1,\dots,2g+1$, we see that \[Z_iPZ_i^{t}=Z_i^tPZ_i=P,\quad i=1,\dots,2g+1.\] Let $J$ be the $2\times 2$ matrix \[J:=\begin{pmatrix} 0 & -1 \\ 1 & 0 \end{pmatrix}\]
	We now impose the condition $Z_iPZ_i^{t}=Z_i^tPZ_i=P$ for each $i$. In principle, it suffices to impose the conditions $Z_iPZ_i^t=P$, however the computations become trickier.
	\begin{description}
		\item[$i=2,\dots,2g$] $P$ is a block diagonal matrix, with $g$ blocks of size $2$. 
		\item[$i=1$] the first block of $P$ is a multiple of $J$.
		\item[$i=2g+1$] the last block of $P$ is a multiple of $J$.
		\item[$i=2,\dots,2g-1$] every block is a multiple of $J$, and with the same coefficient.
	\end{description} 
	Therefore, the invariants form a one-dimensional vector space, generated by the Poincar\'e dual of \[\sum_{i=1}^{g}a_i\wedge b_i.\] By \cite[Theorem 11.11]{milnor1974characteristic}, this is exactly $\Delta'$.
\end{proof}

We are ready to prove the main results of this section. They concern the $\on{Aut}(C)$-invariants and the $\Gamma_g$-invariants in $H^{\bullet}(F(C,n),\Q)$.

\begin{prop}\label{inv}
	Let $C$ be a smooth hyperelliptic curve, let $n\geq 0$, and denote by $\iota:F(C,n)\hookrightarrow C^n$ the natural open embedding. 
	\begin{enumerate}[label=(\alph*)]
		\item\label{inv1} The pullback $\iota^*:H^1(C^n,\Q)\to H^1(F(C,n),\Q)$ is an isomorphism. Moreover, the natural action of $\on{Aut}(C)$ on $H^1(F(C,n),\Q)$ has no non-zero invariants.
		\item\label{inv12} The vector space $H^2(C^n,\Q)^{\Gamma_g}$ has a basis given by $\gamma_i$ and $\on{pr}_{ij}^*(\Delta)$ (for $i<j$).
		\item\label{inv2} The pullback $\iota^*:H^2(C^n,\Q)\to H^2(F(C,n),\Q)$ induces a surjective map $H^2(C^n,\Q)^{\Gamma_g}\to H^2(F(C,n),\Q)^{\Gamma_g}$. Moreover, $\iota^*(\gamma_1),\dots,\iota^*(\gamma_n)$ is a basis for $H^2(F(C,n),\Q)^{\Gamma_g}$.
	\end{enumerate}
\end{prop}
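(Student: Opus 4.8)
The plan is to read off the low-degree rational cohomology of the fiber $F(C,n)$ from Totaro's spectral sequence (\Cref{totaro}, \Cref{cohoconf}) and then extract invariants. Since the only nonzero differential is $d_2$, we have $E_3=E_\infty$, the relevant edge homomorphism in bidegree $(k,0)$ is the pullback $\iota^*$, and the whole spectral sequence is $\Gamma_g$- and $\on{Aut}(C)$-equivariant because $\iota$ is. For part (a), in total degree $1$ the associated graded of $H^1(F(C,n),\Q)$ is $E_\infty^{1,0}\oplus E_\infty^{0,1}$. As $d_2^{0,1}$ is injective (\Cref{differential}), $E_\infty^{0,1}=0$, while $E_\infty^{1,0}=E_2^{1,0}=H^1(C^n,\Q)$ survives, so $\iota^*\colon H^1(C^n,\Q)\to H^1(F(C,n),\Q)$ is an isomorphism. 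For the invariants I would invoke the hyperelliptic involution $\sigma\in\on{Aut}(C)$: since $C/\langle\sigma\rangle\cong\P^1$ has no $H^1$, the involution $\sigma$ acts as $-1$ on $H^1(C,\Q)$, hence as $-1$ on $H^1(C^n,\Q)\cong\bigoplus_i\on{pr}_i^*H^1(C,\Q)$, so already the $\langle\sigma\rangle$-invariants vanish.

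For part (b), the diagonal $\Gamma_g$-action respects the Künneth decomposition (\ref{kunneth}). On each summand $\on{pr}_i^*H^2(C,\Q)$ the group acts trivially (it preserves orientation, hence fixes the fundamental class), so these contribute exactly the $\gamma_i$; on each summand $\on{pr}_{ij}^*(H^1(C,\Q)^{\otimes 2})$ the invariants are one-dimensional and spanned by $\gamma_{ij}=\on{pr}_{ij}^*(\Delta')$, by \Cref{dehncomp}. Thus $H^2(C^n,\Q)^{\Gamma_g}$ has basis the $\gamma_i$ together with the $\gamma_{ij}$ ($i<j$), and by \Cref{gammaij} this is the same span as the $\gamma_i$ and the $\on{pr}_{ij}^*(\Delta)$, which are linearly independent; hence the latter set is the asserted basis.

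For part (c), in total degree $2$ the edge map identifies $\on{im}(\iota^*)$ with $E_\infty^{2,0}=H^2(C^n,\Q)/K$, where $K:=\on{im}(d_2^{0,1})=\langle\on{pr}_{ij}^*(\Delta)\rangle_{i<j}$ by \Cref{differential}; moreover $E_\infty^{0,2}=0$ since $d_2^{0,2}$ is injective. These facts give a $\Gamma_g$-equivariant short exact sequence $0\to\on{im}(\iota^*)\to H^2(F(C,n),\Q)\to E_\infty^{1,1}\to 0$. I would first show $(E_\infty^{1,1})^{\Gamma_g}=0$: as $E_\infty^{1,1}\subseteq E_2^{1,1}=H^1(C^n,\Q)\otimes\langle G_{ij}\rangle$ with the $G_{ij}$ fixed by the diagonal action, it suffices that $H^1(C^n,\Q)^{\Gamma_g}=0$, which holds because the hyperelliptic involution lies in $\Gamma_g$ and acts as $-1$ on $H^1(C,\Q)$. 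By left exactness of invariants this yields $H^2(F(C,n),\Q)^{\Gamma_g}=(\on{im}\iota^*)^{\Gamma_g}$. Since $K=\ker(\iota^*)$ consists of invariant classes, $\iota^*$ annihilates the $\on{pr}_{ij}^*(\Delta)$ and carries $\gamma_1,\dots,\gamma_n$ to linearly independent classes (\Cref{gammaij}), so the image of $H^2(C^n,\Q)^{\Gamma_g}$ is exactly $\langle\iota^*\gamma_1,\dots,\iota^*\gamma_n\rangle$.

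The step I expect to be the main obstacle is the surjectivity $H^2(C^n,\Q)^{\Gamma_g}\twoheadrightarrow(\on{im}\iota^*)^{\Gamma_g}$: because invariants are only left exact, an invariant class of $H^2(C^n,\Q)/K$ need not a priori lift to an invariant class upstairs. I would settle this with a crossed-homomorphism argument. Given an invariant class with lift $x\in H^2(C^n,\Q)$, the assignment $\gamma\mapsto\gamma x-x$ lands in $K$, and since $K$ is a trivial $\Gamma_g$-module it defines a homomorphism $\Gamma_g\to K$. As $\Gamma_g^{\mathrm{ab}}$ is finite (Tanaka's computation of $H_1(\Gamma_g)$ in \cite{tanaka2001first}), equivalently $H^1(\Gamma_g,\Q)=0$, this homomorphism into the torsion-free group $K$ must vanish, so $x$ is already invariant. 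Surjectivity follows, and therefore $\iota^*\gamma_1,\dots,\iota^*\gamma_n$ form a basis of $H^2(F(C,n),\Q)^{\Gamma_g}$.
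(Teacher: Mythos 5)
Your proposal is correct and follows essentially the same route as the paper's proof: Totaro's spectral sequence with $E_3=E_\infty$, injectivity of $d_2^{0,1}$ and $d_2^{0,2}$ (\Cref{differential}), vanishing of the $(1,1)$-contribution via the hyperelliptic involution acting by $-1$ on $H^1(C,\Q)$, and reduction of part (b) to \Cref{dehncomp} and \Cref{gammaij}. Your crossed-homomorphism argument for lifting invariants is an explicit unwinding of the paper's corresponding step, which quotes $H^1(\Gamma_g,\Q)=0$ (Kawazumi) to conclude that the sequence defining $W=H^2(C^n,\Q)/\on{Im}d_2^{0,1}$ stays exact after taking $\Gamma_g$-invariants; your appeal to Tanaka for finiteness of $\Gamma_g^{\mathrm{ab}}$ is an equivalent input.
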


\begin{proof}
	By \Cref{totaro}, we may identify the Leray spectral sequence for the inclusion $F(C,n)\hookrightarrow C^n$ with $(E_C(n),d)$. By \Cref{differential}, after applying the differential $d_2$, the spectral sequence becomes as in Figure 1.
	
	\begin{figure}\label{fig1}
	\begin{tikzpicture}
	\matrix (m) [matrix of math nodes,
	nodes in empty cells,nodes={minimum width=5ex,
		minimum height=5ex,outer sep=-5pt},
	column sep=1ex,row sep=1ex]{
		&     &   &  \\
		2	  &  0 & \cdot	& \cdot	&\\
		1     &  0 &  V  &  \cdot & \\
		0     &  \Q  & H^1(C^n,\Q) &  W  & \\
		\quad\strut &   0  &  1  &  2  & \strut \\};
	\draw[thick] (m-1-1.east) -- (m-5-1.east) ;
	\draw[thick] (m-5-1.north) -- (m-5-5.north) ;
	\end{tikzpicture}
	\caption{$E_3$-page of the Leray spectral sequence for $F(C,n)\hookrightarrow C^n$}
	\end{figure}
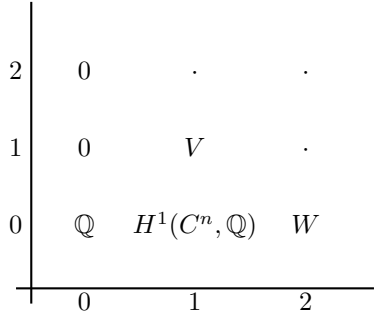
	In the figure, $V$ is a subspace of $E_C(n)^{1,1}=H^1(C^n,\Q)\otimes\ang{G_{ij}}_{1\leq i<j\leq n}$, and \[W=H^2(C^n,\Q)/\on{Im}d_2^{0,1}.\] 
	
	Recall that in the Leray spectral sequence for a map $f:X\to Y$ the edge homomorphisms $H^p(Y,\Q)\to H^p(X,\Q)$ on the $p$-axis coincide with the usual pullback in cohomology $f^*$. By \Cref{cohoconf}, we deduce that $\iota^*:H^1(C^n,\Q)\to H^1(F(C,n),\Q)$ is an isomorphism, and we obtain a short exact sequence \begin{equation}\label{exacth2}0\to W\to H^2(F(C,n),\Q)\to V\to 0.\end{equation}
	The composition $H^2(C^n,\Q)\twoheadrightarrow W\hookrightarrow H^2(F(C,n),\Q)$ is given by pullback in cohomology.
	
	If $G$ is a group acting on $C$ through homeomorphisms, then $G$ acts diagonally on $C^n$ and on $F(C,n)$, therefore it acts on the cohomology of $C,C^n$ and $F(C,n)$. By the functoriality of the Leray spectral sequence, $G$ acts on the spectral sequence as well, and this action is compatible with the natural action on $E_C(n)$, given by acting trivially on the variables $G_{ij}$. Therefore, the isomorphism $\iota^*:H^1(C^n,\Q)\to H^1(F(C,n),\Q)$ is $G$-equivariant. Moreover, $G$ acts on $V$ and $W$, and the short exact sequence of (\ref{exacth2}) is compatible with the $G$-action. 
	
	\ref{inv1} We have already shown that $\iota^*:H^1(C^n,\Q)\to H^1(F(C,n),\Q)$ is an $\on{Aut}(C)$-equivariant isomorphism. To show that $H^1(F(C,n),\Q)^{\on{Aut}(C)}=0$, it suffices to show that $H^1(C^n,\Q)^{\on{Aut}(C)}=0$. By the K\"unneth formula, we have an $\on{Aut}(C)$-equivariant isomorphism $H^1(C^n,\Q)\cong H^1(C,\Q)^{\oplus n}$, hence it is enough to show that $H^1(C,\Q)^{\on{Aut}(C)}=0$. Let $\sigma$ be the (algebraic) hyperelliptic involution of $C$. It is an algebraic automorphism of $C$, and the quotient $C/\ang{\sigma}$ is isomorphic to $\P^1$. We have \[H^1(C,\Q)^{\ang{\sigma}}\cong H^1([C/\ang{\sigma}],\Q)\cong H^1(C/\ang{\sigma},\Q)\cong H^1(\P^1,\Q)=0.\] Here $[C/\ang{\sigma}]$ is a quotient stack, and the second isomorphism comes from \Cref{coarsepic}(a).
	
	\ref{inv12} The decomposition (\ref{kunneth}) is $\Gamma_g$-equivariant. The summands $H^2(C,\Q)$ are clearly $\Gamma_g$-invariant, and  $(H^1(C,\Q)^{\otimes 2})^{\Gamma_g}=\Q\Delta'$ by \Cref{dehncomp}. Looking back at the definitions of $\gamma_i$ and $\gamma_{ij}$ given after (\ref{kunneth}), we deduce that $H^2(C^n,\Q)^{\Gamma_g}$ is generated by the $\gamma_i$ and the $\gamma_{ij}$ $(i<j)$. By \Cref{gammaij}, we deduce that the $\gamma_i$ and the $\on{pr}_{ij}^*(\Delta)$ (for $i<j$) are a basis of $H^2(C^n,\Q)^{\Gamma_g}$.
	
	\ref{inv2} The group of orientation-preserving diffeomorphisms of $C$ acts on $C$, hence on (\ref{exacth2}). Diffeomorphisms that are isotopic to the identity act trivially on (\ref{exacth2}), so we obtain an action of $\Gamma_g$. Taking $\Gamma_g$-invariants, we obtain an exact sequence \begin{equation}\label{invariants}0\to W^{\Gamma_g}\xrightarrow{\phi} H^2(F(C,n),\Q)^{\Gamma_g}\to V^{\Gamma_g}.\end{equation}
	
	Denote by $\tau\in \Gamma_g$ the hyperelliptic involution of $C$, in the sense of topology. It is an element of $\Gamma_g$ that acts on $H^1(C,\Q)$ by $-\on{id}$; see \cite[p. 215-216]{farb2012primer}. In particular, $H^1(C,\Q)^{\ang{\tau}}=0$, and so $H^1(C^n,\Q)^{\Gamma_g}=(H^1(C^n,\Q)^{\oplus n})^{\Gamma_g}=0$. As $V$ is a subspace of $H^1(C^n,\Q)\otimes\ang{G_{ij}}_{1\leq i<j\leq n}$, we deduce that $V^{\Gamma_g}=0$. Combining this with (\ref{invariants}), we deduce that $\phi$ is an isomorphism.
	Consider the short exact sequence
	\begin{equation}\label{shortexact}0\to H^0(C^n,\Q)\otimes\ang{G_{ij}}_{i<j}\xrightarrow{d^{0,1}_2}H^2(C^n,\Q)\to W\to 0\end{equation} which defines $W$. The first term of (\ref{shortexact}) is a trivial $\Gamma_g$-module. By \cite[Corollary 3.3]{kawazumi1997homology}, $H^1(\Gamma_g,\Q)=0$, so (\ref{shortexact}) stays exact after taking $\Gamma_g$-invariants.  By \ref{inv12}, $H^2(C^n,\Q)^{\Gamma_g}$ has a basis consisting of $\gamma_i$ and $\on{pr}_{ij}^*(\Delta)$ (for $i<j$), and by \Cref{differential}(a) the image of $d_2^{0,1}$ is generated by the $\on{pr}_{ij}^*(\Delta)$.  Therefore, the surjective linear map $H^2(C^n,\Q)^{\Gamma_g}\to W^{\Gamma_g}$ sends $\gamma_1,\dots,\gamma_n$ to a basis of $W^{\Gamma_g}$, as desired. Since $\phi$ is an isomorphism, we conclude that the composition \[\iota^*:H^2(C^n,\Q)^{\Gamma_g}\twoheadrightarrow W^{\Gamma_g}\xrightarrow{\phi} H^2(F(C,n),\Q)^{\Gamma_g}\] is surjective and sends $\gamma_1,\dots,\gamma_n$ to a basis of $H^2(F(C,n),\Q)^{\Gamma_g}$, as desired.
\end{proof}

\section{Proof of Theorem \ref{hhg}(a)}
Let $\pi_n:\Hgn\to \Hg$ be the natural forgetful morphism. We intend to compute $H^1(\Hgn,\Q)$ and $H^2(\Hgn,\Q)$ using the Leray spectral sequence for $\pi_n$.

Recall that if $f:E\to X$ is a (topologically) locally trivial orbifold fibration, then for any $q\geq 0$ the sheaf $R^qf_*\Q$ is a local system on $X$; see \cite[\S 2]{petersen2014structure}. The morphism $\Mgn\to \Mg$ is a locally trivial orbifold fibration: it is enough to show this for $\mc{M}_{g,n+1}\to \Mgn$, in which case the result follows from embedding $\mc{M}_{g,n+1}$ in the universal curve $\mc{Z}_{g,n}$ over $\Mgn$, which is a locally trivial fibration by Ehresmann's theorem. Thus, the sheaves $R^q{\pi_n}_*\Q$ are local systems on $\Hg$. They satisfy \[(R^q\pn_*\Q)_{[C]}=H^q(F(C,n),\Q)\] for every point $[C]\in\Hg$. 

\begin{lemma}\label{bottomrow}
	We have ${\pi_n}_*\Q\cong \Q$. Furthermore 
	\begin{align*}
	H^p(\Hg,{\pi_n}_*\Q)=H^p(\Hg,\Q)=\begin{cases}
	\Q &\text{ if $p=0$,} \\
	0 &\text{ if $p>0$.}
	\end{cases}
	\end{align*} 
\end{lemma}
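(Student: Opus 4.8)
The plan is to handle the two assertions in turn: first the identification of the sheaf $(\pi_n)_*\Q$, and then the cohomology computation on $\Hg$, reducing the latter to a rational acyclicity statement.

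For the sheaf statement, I would show that $(\pi_n)_*\Q=R^0(\pi_n)_*\Q$ is the trivial rank-one local system. By \Cref{h-intro}(b) the fiber of $\pi_n$ over $[C]$ is $F(C,n)$, which is connected, being the configuration space of $n$ ordered points on the connected real surface $C$: in a manifold of dimension $\geq 2$ one can always move a point around another without collision. Hence each stalk $H^0(F(C,n),\Q)$ is one-dimensional, spanned by the fiberwise constant function $1$. Since this class is preserved by every homeomorphism of the fiber, the monodromy of the local system is trivial, so $(\pi_n)_*\Q\cong\Q$. The equality $H^p(\Hg,(\pi_n)_*\Q)=H^p(\Hg,\Q)$ is then immediate.

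For the cohomology of $\Hg$, the degree-zero statement is clear since $\Hg$ is connected. For the vanishing in positive degrees I would pass to the coarse moduli space: by \Cref{coarsepic}(a), $H^\bullet(\Hg,\Q)\cong H^\bullet(H_g,\Q)$. The coarse space $H_g$ is the moduli of $2g+2$ unordered points on $\P^1$ up to projective equivalence, namely the branch points of the degree-two cover, so $H_g\cong M_{0,2g+2}/S_{2g+2}$, and rationally $H^\bullet(H_g,\Q)\cong H^\bullet(M_{0,2g+2},\Q)^{S_{2g+2}}$. The claim thus reduces to the rational acyclicity of $M_{0,m}/S_m$ in positive degrees. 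Equivalently, using the Eilenberg--MacLane description $\Hg\simeq K(\Gamma_g,1)$, the claim is $H^p(\Gamma_g,\Q)=0$ for $p>0$; the case $p=1$ is exactly the vanishing $H^1(\Gamma_g,\Q)=0$ already invoked in the proof of \Cref{inv}.

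The main obstacle is precisely this rational acyclicity in positive degrees; everything else is formal bookkeeping. It is consistent with the known fact $\on{Pic}(\Hg)_\Q=0$, which supplies the degree-two case. To establish it in all degrees I would either invoke the known computation of the $S_m$-invariant part of $H^\bullet(M_{0,m},\Q)$, showing it is concentrated in degree $0$, or compute the $\mathrm{PGL}_2$-equivariant cohomology of the unordered configuration space $\mathrm{UConf}_{2g+2}(\P^1)$ of branch points, whose quotient presents $\Hg$ following Arsie--Vistoli. This is the one genuinely nontrivial input; the reduction to it, and the sheaf identification, are routine.
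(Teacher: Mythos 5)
Your proposal is correct and takes essentially the same route as the paper: the identification $({\pi_n})_*\Q\cong\Q$ via connectedness of $F(C,n)$ and triviality of the monodromy is exactly the paper's argument, and both proofs defer the vanishing of $H^p(\Hg,\Q)$ for $p>0$ to a known external input. Where you reduce through the coarse space to the triviality of $H^{\bullet}(M_{0,2g+2},\Q)^{S_{2g+2}}$ in positive degrees, the paper simply cites \cite[Theorem 2.13]{kisin2002equivariant} for the statement that $\Hg$ has the rational cohomology of a point --- the same fact in different clothing, so your reduction is a correct unwinding of that citation.
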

\begin{proof}
	For every point $[C]\in \Hg$, the fiber of the local system ${\pi_n}_*\Q$ at $C$ is given by $H^0(F(C,n), \Q)$. This is canonically isomorphic to $\Q$ because $F(C,n)$ is connected. Therefore, the local system ${\pi_n}_*\Q$ corresponds to the trivial one-dimensional representation of the orbifold fundamental group of $\Hg$, so ${\pi_n}_*\Q\cong \Q$.
	
	By \cite[Theorem 2.13]{kisin2002equivariant}, $\mc{H}_g$ has the rational cohomology of a point, so the result follows.
\end{proof}

\begin{lemma}\label{r1}
	We have $H^{\bullet}(\mc{H}_g,R^1\pn_*\Q)=0$.
\end{lemma}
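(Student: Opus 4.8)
The plan is to reduce the computation to the group cohomology of the hyperelliptic mapping class group and then exploit the hyperelliptic involution. First I would identify the local system $R^1\pn_*\Q$ explicitly as a $\Gamma_g$-representation. By \Cref{inv}(a) the restriction $\iota^*\colon H^1(C^n,\Q)\to H^1(F(C,n),\Q)$ is an isomorphism, and since it is simply pullback along the open immersion $F(C,n)\hookrightarrow C^n$, it is natural in $C$ and hence an isomorphism of local systems over $\Hg$. The K\"unneth formula furnishes a $\Gamma_g$-equivariant isomorphism $H^1(C^n,\Q)\cong H^1(C,\Q)^{\oplus n}$, where $\Gamma_g$ acts diagonally (the \emph{same} way on each summand, since the monodromy action on $C^n$ is diagonal and does not permute the factors). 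Thus the monodromy representation attached to $R^1\pn_*\Q$ is $H^1(C,\Q)^{\oplus n}$ as a $\Gamma_g$-module.

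Since $\Hg$ is a $K(\Gamma_g,1)$ in the orbifold sense, cohomology with coefficients in a local system computes the corresponding group cohomology, so
\[H^{\bullet}(\Hg,R^1\pn_*\Q)\cong H^{\bullet}(\Gamma_g,H^1(C,\Q))^{\oplus n}.\]
It therefore suffices to prove that $H^{\bullet}(\Gamma_g,H^1(C,\Q))=0$. For this I would invoke the topological hyperelliptic involution $\tau\in\Gamma_g$. It is central in $\Gamma_g$ (a standard consequence of Birman--Hilden theory; see \cite{farb2012primer}) and it acts on $H^1(C,\Q)$ by $-\on{id}$. By the standard fact that a central element acts trivially on group cohomology — the endomorphism of $H^{\bullet}(\Gamma_g,M)$ induced by the pair (conjugation by $\tau$, multiplication by $\tau$ on $M$) is the identity, while conjugation by a central element is trivial — the endomorphism of $H^{\bullet}(\Gamma_g,H^1(C,\Q))$ induced by $\tau\colon H^1(C,\Q)\to H^1(C,\Q)$ is the identity. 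On the other hand, since $\tau$ acts as $-\on{id}$ on the $\Q$-vector space $H^1(C,\Q)$, functoriality in the coefficients makes this same endomorphism equal to multiplication by $-1$. Hence $\on{id}=-\on{id}$ on $H^{\bullet}(\Gamma_g,H^1(C,\Q))$, and as we work with rational coefficients this group vanishes.

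The two genuinely standard ingredients, namely that a central element kills cohomology and that $\tau$ acts by $-\on{id}$ on $H^1(C,\Q)$, are already available (the latter was used in the proof of \Cref{inv}(c)). The main points requiring care are the $\Gamma_g$-equivariance of the identification of $R^1\pn_*\Q$ with the local system of fibers $H^1(C,\Q)^{\oplus n}$, and the translation of local-system cohomology of the orbifold $\Hg$ into group cohomology of $\Gamma_g$; once these are in place the vanishing is immediate.
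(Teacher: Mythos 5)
Your proof is correct, but it takes a genuinely different route from the paper's. The paper never passes to group cohomology of $\Gamma_g$: it runs a second Leray spectral sequence, for the coarse moduli map $f:\Hg\to H_g$ with coefficients in $V=R^1\pn_*\Q$, notes that the fiber of $f$ over $[C]$ is $B\on{Aut}(C)$, and kills every term pointwise --- $R^qf_*V=0$ for $q>0$ because $\on{Aut}(C)$ is finite and the coefficients are $\Q$-vector spaces, and $R^0f_*V=0$ because $H^1(F(C,n),\Q)^{\on{Aut}(C)}=0$ by \Cref{inv}(a). So the paper's mechanism is local: it only needs that each individual curve carries the hyperelliptic involution (with quotient $\P^1$) inside its finite automorphism group, and it requires no information about the monodromy representation or about the group structure of $\Gamma_g$. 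Your argument is instead global: you use the orbifold $K(\Gamma_g,1)$ dictionary between local systems and $\Gamma_g$-representations (which the paper also invokes, in the proof of \Cref{generate}) to rewrite $H^{\bullet}(\Hg,R^1\pn_*\Q)$ as $H^{\bullet}(\Gamma_g,H^1(C,\Q)^{\oplus n})$ --- the needed $\Gamma_g$-equivariance of $\iota^*$ and of the K\"unneth splitting is exactly what is established in the proof of \Cref{inv}, and the identification of monodromy with the mapping-class-group action is the same one the paper uses for $R^2\pn_*\Q$ --- and you then kill this group cohomology with the ``central element acts trivially'' lemma, using that $\tau$ is central in $\Gamma_g$ (automatic, since $\Gamma_g$ is by definition the centralizer of $\tau$ in the mapping class group) and acts by $-\on{id}$ on $H^1(C,\Q)$. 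Both proofs ultimately exploit the hyperelliptic involution, but packaged differently: the paper's version avoids any appeal to centrality and to monodromy, at the cost of an extra spectral sequence over the coarse space; yours uses one spectral sequence instead of two, yields the standalone statement $H^{\bullet}(\Gamma_g,H^1(C,\Q))=0$ (of independent interest), and generalizes immediately to any local system on whose fiber the central involution acts by a scalar different from $1$.
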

\begin{proof}
	Let $V:=R^1\pn_*\Q$. Denote by $f:\mc{H}_g\to H_g$ the structure morphism to the coarse moduli space. We wish to compute $H^{\bullet}(\Hg,V)$ using the Leray spectral sequence for $f$ and $V$: \[E_2^{p,q}=H^p(H_g,R^qf_*V)\Rightarrow H^{p+q}(\Hg,V).\] It is enough to show that all terms in the $E_2$-page of the spectral sequence are zero. To prove this, it suffices in turn to show that $R^qf_*V=0$ for every $q\geq 0$.
	Let $C$ be a hyperelliptic curve. The fiber of $f$ above $[C]\in H_g$ is the classifying space $B\on{Aut}(C)$. This implies that the fiber of $R^qf_*V$ at $[C]$ is \[H^q(B\on{Aut}(C), H^1(F(C,n),\Q))=H^q(\on{Aut}(C), H^1(F(C,n),\Q)).\] On the right hand side we are considering group cohomology, where $\on{Aut}(C)$ acts diagonally on $F(C,n)$. Since the coefficient module is a $\Q$-vector space and $\on{Aut}(C)$ is finite by Hurwitz's theorem \cite[Exercise 1.F]{arbarello1985geometry}, group cohomology vanishes for $q>0$. Therefore, $R^qf_*V=0$ for every $q>0$. By \Cref{inv}(a), \[H^0(\on{Aut}(C), H^1(F(C,n),\Q))=H^1(F(C,n),\Q)^{\on{Aut}(C)}=0, \] so $R^0f_*V=0$ as well.
\end{proof}

We now come to the main result of this section, which implies \Cref{hhg}(a).

\begin{prop}\label{generate}
		We have:
	\begin{enumerate}[label=(\alph*)]
		\item $H^1(\Hgn, \Q)=0$;
		\item $\on{Pic}(\Hgn)_{\Q}\cong H^2(\Hgn,\Q)$ has a basis given by $\psi$-classes.
	\end{enumerate}	
\end{prop}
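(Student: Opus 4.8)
The plan is to compute $H^1(\Hgn,\Q)$ and $H^2(\Hgn,\Q)$ via the Leray spectral sequence for the forgetful morphism $\pi_n:\Hgn\to\Hg$,
\[
E_2^{p,q}=H^p(\Hg,R^q\pn_*\Q)\Rightarrow H^{p+q}(\Hgn,\Q),
\]
using the fact that the sheaves $R^q\pn_*\Q$ are local systems on $\Hg$ with fibers $H^q(F(C,n),\Q)$, and that computing $H^p(\Hg,-)$ of such a local system amounts to taking $\Gamma_g$-invariants (since $\Hg$ is a $K(\Gamma_g,1)$ in the orbifold sense). The three inputs I would assemble are: the bottom row $q=0$, handled by \Cref{bottomrow}, which gives $E_2^{p,0}=H^p(\Hg,\Q)$, so $\Q$ in bidegree $(0,0)$ and zero elsewhere on the $p$-axis; the row $q=1$, handled by \Cref{r1}, which gives $E_2^{p,1}=H^p(\Hg,R^1\pn_*\Q)=0$ for all $p$; and the $(0,2)$-corner, which requires $E_2^{0,2}=H^0(\Hg,R^2\pn_*\Q)=H^2(F(C,n),\Q)^{\Gamma_g}$.

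First I would read off part (a). In total degree $1$ the only possibly nonzero term is $E_2^{1,0}=H^1(\Hg,\Q)$, which vanishes by \Cref{bottomrow}, and $E_2^{0,1}=0$ by \Cref{r1}; hence $H^1(\Hgn,\Q)=0$, giving (a).

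For part (b), in total degree $2$ the potentially contributing terms are $E_2^{2,0}$, $E_2^{1,1}$, and $E_2^{0,2}$. The first two vanish by \Cref{bottomrow} and \Cref{r1} respectively, so the only surviving contribution is $E_2^{0,2}=H^2(F(C,n),\Q)^{\Gamma_g}$. By \Cref{inv}\ref{inv2}, this space has a basis given by $\iota^*(\gamma_1),\dots,\iota^*(\gamma_n)$. It remains to check that no further differentials kill this corner: the only relevant differential out of $E_2^{0,2}$ is $d_2^{0,2}:E_2^{0,2}\to E_2^{2,1}=H^2(\Hg,R^1\pn_*\Q)$, whose target is zero by \Cref{r1}, and there is no nonzero differential into $E_2^{0,2}$. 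Thus the edge map identifies $H^2(\Hgn,\Q)\cong H^2(F(C,n),\Q)^{\Gamma_g}$, a vector space of dimension $n$. Finally I would match this with the Picard group: by the exponential sequence for $\Hgn$ together with $H^1(\Hgn,\Q)=0$ from part (a), one gets $\on{Pic}(\Hgn)_\Q\cong H^2(\Hgn,\Q)$, and I would identify the distinguished classes $\iota^*(\gamma_i)=\pn^*([p]\text{ on the }i\text{-th section})$ with the $\psi$-classes $\psi_i$, since $\gamma_i$ is the pullback of the point class under the $i$-th projection, which restricts on each fiber to the cotangent line class at the $i$-th marking.

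The main obstacle is bookkeeping the spectral sequence degeneration rather than any single hard computation: one must be certain that every term in total degrees $1$ and $2$ other than $E_2^{0,2}$ vanishes and that no differential touches $E_2^{0,2}$, all of which reduces to \Cref{bottomrow} and \Cref{r1}. The one genuinely substantive point needing care is the final identification of the basis $\iota^*(\gamma_i)$ with the geometric $\psi_i$: this requires verifying that the $\gamma_i$, defined fiberwise as point-class pullbacks, globalize to (and are linearly independent as) the $\psi$-classes on $\Hgn$, which I would do by restricting $\psi_i$ to a fiber $F(C,n)$ and comparing with $\gamma_i$ under the identification of $R^2\pn_*\Q$.
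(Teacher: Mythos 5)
Your proposal is correct and follows essentially the same route as the paper: the Leray spectral sequence for $\pi_n$ with \Cref{bottomrow} and \Cref{r1} killing the first two rows, the edge-map identification of $H^2(\Hgn,\Q)$ with $H^2(F(C,n),\Q)^{\Gamma_g}$ via \Cref{inv}, and the exponential sequence combined with matching $\iota^*(\gamma_i)$ against the restricted $\psi$-classes. The only refinement in the paper's version is that the exponential sequence is applied to the coarse moduli space $H_{g,n}$ (legitimized by \Cref{coarsepic}) rather than to the stack itself, using that $H^1(H_{g,n},\Z)$ is a full lattice in $H^1(H_{g,n},\O_{H_{g,n}})$ to deduce $H^1(H_{g,n},\O_{H_{g,n}})=0$ and hence injectivity of the rational first Chern class map.
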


\begin{proof}
We consider the Leray spectral sequence for $\pi_n$:
\[E_2^{p,q}=H^p(\Hg,R^q_*\Q)\Rightarrow H^{p+q}(\Hgn,\Q).\]
The combination of \Cref{bottomrow} and \Cref{r1}, shows that in the $E_2$-page the first two rows are zero, with the exception of $(0,0)$. It follows that 
\[H^1(\Hgn,\Q)=0\] and that the edge homomorphism \[\phi:H^2(\Hgn,\Q)\to H^0(\Hg,R^2{\pi_n}_*\Q)\] is an isomorphism. Recall that in the Leray spectral sequence for a map $f:X\to Y$, the edge homomorphisms $H^q(X,\Q)\to H^0(Y,R^qf_*\Q)$ on the $q$-axis are given by sending a cohomology class $\alpha\in H^q(X,\Q)$ to the section $p\mapsto \alpha|_{f^{-1}(p)}$ of $R^qf_*\Q$. Since $\Hg$ is an orbifold $K(\Gamma_g,1)$, associating to a finite-dimensional local system over $\Hg$ its fiber at $[C]\in \Hg$ gives a correspondence between finite-dimensional local systems and finite-dimensional $\Gamma_g$-representations. Moreover, the cohomology of a local system over $\Hg$ coincides with the group cohomology of the associated $\Gamma_g$-representation. Therefore \[H^0(\Hg,R^2{\pi_n}_*\Q)\cong H^0(\Gamma_g, H^2(F(C,n), \Q)) \cong H^2(F(C,n),\Q)^{\Gamma_g}.\] Since the first Chern class $c_1(\cdot)$ commutes with pullbacks, we have a commutative diagram

\begin{equation*}
\begin{tikzcd}
\on{Pic}(\Hgn)_{\Q} \arrow[rr,"\rho"] \arrow[d,"c_1(\cdot)"] && \on{Pic}(F(C,n))_{\Q}\arrow[d,"c_1(\cdot)"]\\
H^2(\Hgn,\Q) \arrow[r,"\phi"] & H^0(\Hg,R^2f_*\Q) \arrow[r,hook,"\sigma"] & H^2(F(C,n),\Q),\\
\end{tikzcd}
\end{equation*}
	where $\rho$ and $\sigma$ are given by restriction to $\pi_n^{-1}([C])=F(C,n)$.
	
	Let $\psi_1,\dots,\psi_n\in \on{Pic}(\Hgn)$ be the restriction of the $\psi$-classes to the hyperelliptic locus. Recall that, by definition, $\psi_i$ is the class of the line bundle  $\mc{L}_i$ on $\Hgn$ which to a family $X\to S$ in $\Hgn(S)$ with sections $\sigma_1,\dots,\sigma_n$ associates the line bundle $\sigma_i^*\omega_{X/S}$. The restriction $\rho(\mc{L}_i)$ of $\mc{L}_i$ to $\pi_n^{-1}([C])=F(C,n)$ is given by the pullback of the canonical line bundle of $C$ along the composition \[F(C,n)\xhookrightarrow{\iota} C^n\xrightarrow{\on{pr}_i}C.\] 
	Since $H^2(C,\Q)\cong \Q$, the canonical class of $C$ is a non-zero scalar multiple of the Poincar\'e dual of the class of a point. Pulling back to $F(C,n)$, we see that $c_1(\rho(\mc{L}_i))$ restricts to a scalar multiple of $\iota^*(\gamma_i)$.  From the commutativity of the diagram, we see that $c_1(\psi_i)$ is a non-zero scalar multiple of $\iota^*(\gamma_i)$. By \Cref{inv}(c), $\iota^*(\gamma_1),\dots,\iota^*(\gamma_n)$ is a basis of $H^2(F(C,n),\Q)^{\Gamma_g}$. Therefore, $H^2(\Hgn,\Q)$ has dimension $n$, and $c_1(\psi_1),\dots,c_1(\psi_n)$ is a basis of $H^2(\Hgn,\Q)$.

Recall from \Cref{coarsepic} that $\Hgn$ and its coarse space $H_{g,n}$ have the same rational cohomology and rational Picard group. The exponential sequence for $H_{g,n}$ (see \cite[\S 2.8]{danilov1996cohomology}) gives
\[\dots\to H^1({H}_{g,n},\Z)\to H^1({H}_{g,n},\O_{H_{g,n}})\to \on{Pic}({H}_{g,n})\xrightarrow{c_1(\cdot)} H^2({H}_{g,n},\Z)\to\cdots\]
where $c_1(\cdot)$ is given by the first Chern class. Recall that $H^1({H}_{g,n},\Z)$ is a full lattice in $H^1({H}_{g,n},\O_{H_{g,n}})$. By \Cref{generate}(a) and \Cref{coarsepic}(a), we have $H^1(H_{g,n},\Z)_{\Q}=H^1({H}_{g,n},\Q)=0$, thus $H^1(H_{g,n},\O_{H_{g,n}})=0$. It follows that the rational Chern class $\on{Pic}({H}_{g,n})_{\Q}\to H^2({H}_{g,n},\Q)$ is injective. It is also surjective, since $H^2({H}_{g,n},\Q)$ is generated by the first Chern classes of the $\psi$-classes. 
\end{proof}

\section{Description of the boundary}\label{boundary}

\begin{prop}\label{picgen}
	The group $\on{Cl}(\bHgn)_{\Q}$ is generated by $\psi$-classes and boundary divisors.
\end{prop}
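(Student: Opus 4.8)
The plan is to leverage the structural result of \Cref{generate}(b) together with the geometry of the compactification. We already know that $\on{Pic}(\Hgn)_{\Q}$ is generated by the $\psi$-classes. The strategy is to relate $\on{Cl}(\bHgn)_{\Q}$ to $\on{Cl}(\Hgn)_{\Q}=\on{Pic}(\Hgn)_{\Q}$ via the standard exact sequence associated to an open immersion with closed complement of pure codimension one. By \Cref{h-intro}(f), $\Hgn$ is a dense open substack of $\bHgn$, and its complement is the boundary $\partial \bHgn = \bHgn \setminus \Hgn$, which is supported on the boundary divisors.

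First I would invoke the excision/localization sequence for divisor class groups. If $\bHgn$ is integral (which follows from irreducibility in \Cref{h-intro}(f)), and $D_1,\dots,D_k$ are the irreducible components of the boundary $\partial\bHgn$ that have codimension one in $\bHgn$, then there is a right-exact sequence
\[
\bigoplus_{i=1}^k \Z\cdot[D_i] \longrightarrow \on{Cl}(\bHgn) \longrightarrow \on{Cl}(\Hgn) \longrightarrow 0,
\]
coming from the fact that restricting a Weil divisor to the open substack $\Hgn$ kills exactly the classes supported on the complement. Tensoring with $\Q$ preserves right-exactness, giving a surjection $\bigoplus_i \Q\cdot[D_i] \oplus \on{Cl}(\Hgn)_{\Q} \twoheadrightarrow \on{Cl}(\bHgn)_{\Q}$ once we choose set-theoretic lifts of generators. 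This needs the complement to be pure of codimension one; codimension-one components contribute divisor classes, and any higher-codimension part of the boundary does not affect the class group. Thus any class in $\on{Cl}(\bHgn)_{\Q}$ is the sum of a class supported on the boundary divisors and a class whose restriction to $\Hgn$ is well-defined.

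Next I would use \Cref{generate}(b): the restriction map sends $\on{Cl}(\bHgn)_{\Q}$ onto $\on{Cl}(\Hgn)_{\Q}=\on{Pic}(\Hgn)_{\Q}$, and the latter is generated by the $\psi$-classes. Since the $\psi$-classes on $\bHgn$ restrict to the $\psi$-classes on the open substack $\Hgn$, I can subtract an appropriate $\Q$-combination of the $\psi$-classes on $\bHgn$ from any given class to obtain a class whose restriction to $\Hgn$ vanishes. By exactness of the localization sequence, such a class lies in the image of $\bigoplus_i \Q\cdot[D_i]$, i.e.\ it is a $\Q$-combination of boundary divisors. Combining these two observations shows every element of $\on{Cl}(\bHgn)_{\Q}$ is a $\Q$-linear combination of $\psi$-classes and boundary divisors, which is exactly the claim.

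The main technical point to verify is that the boundary $\partial\bHgn$ is indeed covered (up to higher codimension) by the \emph{boundary divisors} in the sense intended by the theorem statement, so that the generators $[D_i]$ of the localization sequence are precisely the classes the theorem names. This is essentially a matter of matching the irreducible codimension-one components of $\bHgn \setminus \Hgn$ with the enumerated boundary divisors; the explicit enumeration is deferred to the subsequent part of the paper (where linear independence is also established). For the present generation statement, it suffices that every codimension-one component of the complement is a boundary divisor, which follows from \Cref{reg}(b) (smoothness in codimension one, referenced in the introduction) guaranteeing that Weil divisors behave well, together with the fact that $\bHgn$ is by definition built inside $\bMgn$ so its boundary is contained in the boundary of $\bMgn$. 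I do not expect any genuine obstacle here beyond bookkeeping, since the hard analytic input—the computation of $\on{Pic}(\Hgn)_{\Q}$—has already been carried out in \Cref{generate}.
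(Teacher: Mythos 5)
Your proposal is correct and matches the paper's own argument: both deduce the result from the excision (localization) sequence $\oplus\,\Q\,\mc{D}_h\to \on{Cl}(\bHgn)_{\Q}\to \on{Cl}(\Hgn)_{\Q}\to 0$ for the open immersion $\Hgn\hookrightarrow\bHgn$, combined with \Cref{generate}, which says $\on{Cl}(\Hgn)_{\Q}$ is generated by $\psi$-classes. Your extra worries (lifting the $\psi$-classes, purity of the boundary, identifying the $D_i$ with the named boundary divisors) are harmless bookkeeping that the paper leaves implicit, since ``boundary divisors'' means precisely the codimension-one components of $\bHgn\setminus\Hgn$.
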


\begin{proof}
	By \Cref{generate}, $\on{Pic}(\Hgn)_{\Q}=\on{Cl}(\Hgn)_{\Q}$ is generated by $\psi$-classes. The conclusion immediately follows from the consideration of the excision exact sequence \[\oplus \Q \mc{D}_h\to \on{Cl}(\bHgn)_{\Q}\to \on{Cl}(\Hgn)_{\Q}\to 0,\] where $\mc{D}_h$ are the codimension $1$ components of the boundary of $\bHgn$; see \cite[p. 614]{vistoli1989intersection}, or \cite[Proposition 2.4.1]{kresch1999cycle}. 
\end{proof}

In order to complete the proof of \Cref{hhg}(b), we must study the geometry of the boundary of $\bHgn$ and prove that $\psi$-classes and classes of boundary divisors are linearly independent. Our analysis requires some basic deformation theory, as explained, for example, in \cite[Chapter XI]{arbarello2011geometry} and \cite{deligne1969irreducibility}.

Recall the product decomposition for an irreducible boundary divisor of $\bMgn$ as the image of a finite morphism from a product of moduli of curves with smaller genus and number of markings; see \cite{knudsen1983projectivity} or \cite[\S X.10, \S XII.10]{arbarello2011geometry} for detailed proofs. Informally, there is an irreducible boundary divisor $\D_{irr,n}$, which is the image of the finite map
\[\xi_{\D_{irr,n}}:\cl{\mc{M}}_{g-1,n+2}\to \cl{\mc{M}}_{g,n}\]
that glues the last two sections to a node. Moreover, there is an irreducible boundary divisor $\D_{i,I}$, which is the image of the finite morphism \[\xi_{\D_{i,I}}:\cl{\mc{M}}_{i,I\cup\set{n+1}}\times\cl{\mc{M}}_{g-i, I^c\cup\set{n+2}}\to \cl{\mc{M}}_{g,n}\] that glues the two curves along the sections $\sigma_{n+1}$ and $\sigma_{n+2}$, for every $0\leq i\leq \frac{g}{2}$ and $I\c\set{1,\dots,n}$ such that the domain of $\xi_{\D_{i,I}}$ is non-empty. These divisors are the irreducible components of $\partial \bMgn$, and the morphisms are called clutching morphisms. More generally, $\bMgn$ can be stratified by topological type, and there is a similarly defined clutching map $\xi_{\Delta}$ for every stratum $\Delta$. 

We also recall the structure of the boundary of $\bHg$. We refer the reader to \cite[\S X.3, \S XIII.8]{arbarello2011geometry} for a more thorough discussion. For a stable curve $C$, and nodes $q_1,\dots,q_r$ of $C$, denote by $\widetilde{C}_{q_1,\dots,q_r}$ the partial normalization of $C$ at $q_1,\dots,q_r$. The irreducible divisor $\eta_{irr}$ parametrizes stable hyperelliptic curves $C$ with at least one non-separating node $q$, that is, such that $\widetilde{C}_q$ is connected. For every $1\leq i\leq \frac{g}{2}$, the irreducible divisor $\delta_i$ parametrizes curves $C$ admitting a node $q$ such that $\widetilde{C}_q$ is the disjoint union of two curves of genera $i$ and $g-i$. Such a node is necessarily fixed by the hyperelliptic involution of $C$, i.e., it is a \emph{Weierstrass point}. Lastly, for $1\leq i\leq \frac{g-1}{2}$, there is an irreducible divisor $\eta_i$ parametrizing curves $C$ having two nodes $q_1$ and $q_2$ that are conjugated by the involution, and such that $\widetilde{C}_{q_1,q_2}$ is the disjoint union of two curves of genera $i$ and $g-i-1$. In the first two cases, we say that $q$ is a node of type $\eta_{irr}$ or $\delta_i$, and in the third case we say that $q_1$ and $q_2$ are a pair of nodes of type $\eta_i$; c.f. \cite[p. 102]{arbarello2011geometry}.

Our purpose is now to generalize this description to the boundary of $\bHgn$, for a fixed integer $n\geq 1$. We will do so by considering the inverse images of the boundary divisors of $\bHg$ under the forgetful morphism $\cl{\pi}_n:\bHgn\to \bHg$. Recall from \Cref{h-intro} that $\cl{\pi}_n$ is open. From now on, we denote by $I$ a subset of $\set{1,\dots, n}$.

Recall that we denote by $\mc{H}^{rt}_{g,n}$ the moduli stack of hyperelliptic curves with rational tails. The complement $\mc{H}^{rt}_{g,n}\setminus \Hgn$ is a union of irreducible divisors $\delta'_{0,I}$ parametrizing curves having rational tails marked by $I$, for every $I$ having at least two elements (so that the resulting pointed curves are stable); see \cite[\S 1]{tavakol2017tautological}. We denote by $\delta_{0,I}$ the divisors of $\bHgn$ given by the closure of $\delta'_{0,I}$.

Let now $\mc{D}$ be an irreducible divisor of $\bHgn$ mapping to the boundary of $\bHg$. If $\mc{D}$ maps to some $\delta_i$  then $\mc{D}$ is a component of $\cl{\pi}_n^{-1}(\delta_i)$. The fiber above a point $[C]\in\delta_i$ contains the ordered configuration space $F(C,n)$ as a dense open subspace. Here $F(C,n)$ is defined in the same way as for smooth curves. Since a general curve parametrized by $\delta_i$ has exactly two components, the general fiber of $\cl{\pi}_n$ over $\delta_i$ has $2^n$ components.
 By \Cref{h-intro}(a), $\cl{\pi}_n|_{\cl{\pi}_n^{-1}(\delta_i)}:\cl{\pi}_n^{-1}(\delta_i)\to \delta_i$ is open, hence every irreducible component of $\cl{\pi}_n^{-1}(\delta_i)$ dominates $\delta_i$. By \cite[Tag 0554]{stacks-project}, we deduce that $\cl{\pi}_n^{-1}(\delta_i)$ has at most $2^n$ irreducible components. 
 
 We now claim that $\cl{\pi}_n^{-1}(\delta_i)$ has exactly $2^n$ irreducible components. To prove this, it is enough to exhibit $2^n$ disjoint open subsets of $\cl{\pi}_n^{-1}(\delta_i)$ and take their closures. For every $I\c\set{1,\dots,n}$, denote by $\mathring{\delta}_{i,I}$ the set of points of $\cl{\pi}_n^{-1}(\delta_i)$ corresponding to hyperelliptic curves $C$ obtained by glueing two smooth hyperelliptic curves $C_1$ and $C_2$ of genera $i$ and $g-i$ at one point (necessarily a Weierstrass point for $C$), and such that $q_j\in C_1$ if $j\in I$ and $q_j\in C_2$ if $j\in I^c$. 
 
 Note that $\mathring{\delta}_{i,I}\neq \emptyset$ for any choice of $I$, because the divisor $\delta_i$ is defined only for $i\geq 1$. Moreover, a point of $\cl{\pi}_n^{-1}(\delta_i)$ belongs to $\mathring{\delta}_{i,I}$ if and only if it belongs to $\on{Im}\xi_{\D_{i,I}}$ and it does not belong to $\on{Im}\xi_{\D}$ for any $\D\neq \D_{i,I}$. The curves parametrized by $\cl{\pi}_n^{-1}(\delta_i)$ are all singular, hence they belong to at least one $\on{Im}\xi_{\D}$. This shows that   \[\mathring{\delta}_{i,I}=\cl{\pi}_n^{-1}(\delta_i)\setminus\bigcap_{\D\neq \D_{i,I}}(\on{Im}\xi_{\D}\cap \cl{\pi}_n^{-1}(\delta_i)).\] By \cite[Corollary 3.9]{knudsen1983projectivity} the clutching morphisms are finite, hence $\on{Im}\xi_{\D}\cap \cl{\pi}_n^{-1}(\delta_i)$ is closed in $\cl{\pi}_n^{-1}(\delta_i)$, and so $\mathring{\delta}_{i,I}$ is open in $\cl{\pi}_n^{-1}(\delta_i)$. Therefore, the $\mathring{\delta}_{i,I}$ are $2^n$ pairwise disjoint open subspaces of $\cl{\pi}_n^{-1}(\delta_i)$. If we denote by $\delta_{i,I}$ the closure of $\mathring{\delta}_{i,I}$, we have shown that \begin{equation}\label{delta}\cl{\pi}_n^{-1}(\delta_i)=\bigcup_{I\c\set{1,\dots,n}} \delta_{i,I}\end{equation} is the irredundant decomposition of $\cl{\pi}_n^{-1}(\delta_i)$ in irreducible components. For every $I$, the general fiber of $\cl{\pi}_n|_{\delta_{i,I}}:\delta_{i,I}\to\delta_i$ has dimension $n$, hence every $\delta_{i,I}$ is an irreducible divisor of the boundary of $\bHgn$.

A similar reasoning shows that, for $i\geq 1$, \begin{equation}\label{eta}\cl{\pi}_n^{-1}(\eta_i)=\bigcup_{I\c\set{1,\dots,n}} \eta_{i,I},\end{equation} where every $\eta_{i,I}$ is an divisor of $\bHgn$. A general point of $\eta_{i,I}$ represents a pointed hyperelliptic curve $(C;p_1,\dots,p_n)$ obtained by joining two smooth hyperelliptic curves $C_1$ and $C_2$ of genera $i$ and $g-i$ at two points that are conjugated under the involution of $C$, and $p_j\in C_1$ if and only if $j\in I$. In this case, the role of $\D_{i,I}$ is taken up by the stratum $\Delta$ corresponding to this topological type. Finally, if $i=0$ and $C$ is a general curve parametrized by $\eta_{irr}$, then $C$ is irreducible. It follows that in this case $\cl{\pi}_n^{-1}([C])$ is irreducible of dimension $n$, so by \cite[004Z]{stacks-project} \begin{equation}\label{irr}\cl{\pi}_n^{-1}(\eta_{irr})=\eta_{irr,n}\end{equation} is an irreducible divisor of $\bHgn$. We have obtained the following description of the boundary of $\bHgn$. 

\begin{prop}\label{alldiv}
The boundary of $\bHgn$ is a divisor. Let $\mc{D}\c \bHgn$ be an irreducible boundary divisor, and let $(C;p_1,\dots,p_n)$ be a general curve parametrized by $\mc{D}$. Then exactly one of the following holds. 
\begin{enumerate}[label=(\roman*)]
	\item We have $\mc{D}=\delta_{i,I}$ for some $0\leq i\leq \frac{g}{2}$ and some $I\c\set{1,\dots,n}$, $C$ is obtained by glueing two smooth hyperelliptic curves $C_1$ and $C_2$ of genera $i$ and $g-i$ at a Weierstrass point, and $p_j\in C_1$ if and only if $j\in I$.
	\item We have $\mc{D}=\eta_{i,I}$ for some $1\leq i\leq \frac{g-1}{2}$ and some $I\c\set{1,\dots,n}$, $C$ is obtained by glueing two smooth irreducible curves $C_1$ and $C_2$ of genera $i$ and $g-i$ at a pair of points switched by the involutions, and $p_j\in C_1$ if and only if $j\in I$.
	\item We have $\mc{D}=\eta_{irr,n}$, $C$ is irreducible and has exactly one node.
\end{enumerate}

If $\mc{D}$ is of the form $\delta_{0,I}$, a general point of $\mc{D}$ belongs to $\mc{H}_{g,n}^{rt}$. Otherwise, a general curve parametrized by $\mc{D}$ is stable even after removing the marked points.
\end{prop}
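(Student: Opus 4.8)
The plan is to assemble the proposition from the component analysis carried out in the paragraphs preceding it; the only genuinely new ingredients are a clean set-theoretic decomposition of the boundary and the purity statement it yields. First I would record the decomposition
\[\bHgn\setminus\Hgn=(\mc{H}^{rt}_{g,n}\setminus\Hgn)\cup\cl{\pi}_n^{-1}(\partial\bHg),\]
which holds because $\bHgn=\cl{\pi}_n^{-1}(\bHg)$, because $\cl{\pi}_n^{-1}(\Hg)=\mc{H}^{rt}_{g,n}$ by definition, and because $\Hgn\c\mc{H}^{rt}_{g,n}$. The first piece equals $\bigcup_{|I|\geq 2}\delta'_{0,I}$ by the description of $\mc{H}^{rt}_{g,n}\setminus\Hgn$ recalled above, whose closures are the $\delta_{0,I}$; the second piece equals $\eta_{irr,n}\cup\bigcup_{i\geq 1,I}\delta_{i,I}\cup\bigcup_{i\geq 1,I}\eta_{i,I}$ by \eqref{delta}, \eqref{eta}, \eqref{irr}, using that $\partial\bHg=\eta_{irr}\cup\bigcup_i\delta_i\cup\bigcup_i\eta_i$ set-theoretically.

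Granting this, the boundary is a divisor: each $\delta_{i,I}$, $\eta_{i,I}$ and $\eta_{irr,n}$ was already shown to be an irreducible divisor via the fiber-dimension count for $\cl{\pi}_n$, and each $\delta_{0,I}$ is the closure of a divisor in the open substack $\mc{H}^{rt}_{g,n}$, hence a divisor in $\bHgn$. Thus $\bHgn\setminus\Hgn$ is a finite union of codimension-one irreducible closed substacks, so it is pure of codimension one, and every irreducible boundary divisor $\mc{D}$ coincides with one of the listed divisors. Absorbing $\delta_{0,I}$ into case (i) with $i=0$ --- a rational tail being a genus-$0$ curve glued to the genus-$g$ part at a Weierstrass point --- gives the trichotomy (i)--(iii). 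For each case the description of a general member is exactly the one used to construct the corresponding component above, and the ranges $0\leq i\leq \frac{g}{2}$ and $1\leq i\leq \frac{g-1}{2}$ record the identifications $\delta_{i,I}=\delta_{g-i,I^c}$ (and likewise for $\eta$) coming from interchanging the two components, exactly as for $\bMgn$. The three cases are mutually exclusive because their general members have pairwise distinct topological types --- a separating node at a Weierstrass point, a conjugate pair of nodes, and a single non-separating node on an irreducible curve --- so ``exactly one'' holds.

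It then remains to verify the two concluding assertions. If $\mc{D}=\delta_{0,I}$, then by construction $\mc{D}$ is the closure of $\delta'_{0,I}\c\mc{H}^{rt}_{g,n}=\cl{\pi}_n^{-1}(\Hg)$, so a general point of $\mc{D}$ lies in $\mc{H}^{rt}_{g,n}$. In every other case a general member $C$ has no rational tails, and stability of $C$ after forgetting the $n$ markings is a direct genus count: in case (i) with $i\geq 1$ the two components have genera $i\geq 1$ and $g-i\geq 1$ and carry one node each; in case (ii) they have genera $i\geq 1$ and $g-i\geq 1$ and carry two nodes each; in case (iii) $C$ is irreducible of arithmetic genus $g\geq 2$. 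In each instance every component satisfies the stability inequality even without the marked points.

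The step requiring the most care is the set-theoretic decomposition and the purity it yields: one must check that no boundary component is overlooked and that nothing of codimension $\geq 2$ survives as a separate component. This rests on \Cref{h-intro}(a) --- the openness and equidimensionality of $\cl{\pi}_n$, already used above to identify the components of $\cl{\pi}_n^{-1}(\delta_i)$, $\cl{\pi}_n^{-1}(\eta_i)$ and $\cl{\pi}_n^{-1}(\eta_{irr})$ --- together with the recalled structure of $\partial\bHg$. Once the boundary is exhibited as the finite union of divisors above, purity and the classification are formal.
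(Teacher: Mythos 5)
Your proposal is correct and takes essentially the same route as the paper: Proposition \ref{alldiv} is stated there without a separate proof, as a summary of the preceding analysis (the rational-tails divisors $\delta_{0,I}$, the decompositions \eqref{delta}, \eqref{eta}, \eqref{irr}, and the openness of $\cl{\pi}_n$ from \Cref{h-intro}(a)), and your write-up is exactly that assembly, with the implicit set-theoretic decomposition $\bHgn\setminus\Hgn=(\mc{H}^{rt}_{g,n}\setminus\Hgn)\cup\cl{\pi}_n^{-1}(\partial\bHg)$ spelled out. The only imprecision --- describing a general point of $\delta_{0,I}$ as glued at a Weierstrass point, which a general rational tail is not --- is inherited from the wording of the statement itself rather than introduced by your argument.
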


The purpose of this section is to determine the class of the restriction of each boundary divisor of $\bMgn$ in $\on{Cl}(\bHgn)_{\Q}$. If $(C;p_1,\dots,p_n)$ is a stable $n$-pointed curve, we denote by $D=(p_1,\dots,p_n)$ the ordered $n$-uple of markings, and we write $(C;D)$ for $(C;p_1,\dots,p_n)$. We denote by $\Phi:\bMgn\to \bMg$ the forgetful morphism.

 Recall that if $(X\to S;\sigma_1,\dots,\sigma_n)$ is a family of pointed nodal curves, the locus of points in $S$ having stable fiber is open; see \cite[Lemma X.3.4]{arbarello2011geometry}. It follows that there exists an open substack $\cl{\mc{M}}_{g,n}^{\circ}$ of $\bMgn$ parametrizing curves $(C;D)$ such that $C$ is stable as an unmarked curve. Clearly $\Mgn$ is contained in $\cl{\mc{M}}_{g,n}^{\circ}$.

\begin{lemma}\label{surjdiff}
	The restriction of $\Phi$ to $\cl{\mc{M}}_{g,n}^{\circ}$ is smooth.
\end{lemma}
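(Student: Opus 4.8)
The plan is to analyze the forgetful morphism $\Phi:\bMgn\to\bMg$ by factoring it into the intermediate forgetful morphisms $\cl{\mc{M}}_{g,k+1}\to\cl{\mc{M}}_{g,k}$ that forget a single section, and to prove smoothness of each step when restricted to the relevant open locus. Recall from the proof of \Cref{m-intro} that Knudsen identifies $\cl{\mc{M}}_{g,k+1}$ with the universal curve $\mc{Z}_{g,k}\to\cl{\mc{M}}_{g,k}$ via the contraction/stabilization isomorphism, and that the restriction of $\mc{Z}_{g,k}\to\cl{\mc{M}}_{g,k}$ to $\pi^{-1}(\mc{M}_{g,k})$ is smooth. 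The key point is that smoothness of the universal curve over a point $[C]$ of $\cl{\mc{M}}_{g,k}$ depends only on $C$ being a smooth curve, or more precisely on the section (the extra marked point) being allowed to vary in a smooth family whose total space is regular. What makes $\cl{\mc{M}}_{g,n}^{\circ}$ special is that every curve it parametrizes is stable as an \emph{unmarked} curve, so the underlying coarse curve $C$ varies in $\cl{\mc{M}}_g$ but the marked points impose no additional constraint on smoothness of the fibration.

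First I would reduce to the single-step case: it suffices to show that the restriction of each $\cl{\mc{M}}_{g,k+1}\to\cl{\mc{M}}_{g,k}$ to the preimage of $\cl{\mc{M}}_{g,k}^{\circ}$ (suitably interpreted) is smooth, since $\Phi|_{\cl{\mc{M}}_{g,n}^{\circ}}$ is the composition of such restrictions and smoothness is stable under composition. Under Knudsen's identification, this single step is the universal curve $\mc{Z}_{g,k}\to\cl{\mc{M}}_{g,k}$, and the question becomes local on the base: over a point $[C]$ with $C$ an unmarked-stable curve, I must show that the total space of the universal curve is smooth over the base in a neighborhood. The crucial observation is that smoothness of $\pi:\mc{Z}_{g,k}\to\cl{\mc{M}}_{g,k}$ fails precisely at the nodes of the fibers, where the local model is $\Spec k[x,y,t]/(xy-t)\to\Spec k[t]$, which is flat with a singular total space but \emph{smooth fibers away from the node}; one must verify that along $\cl{\mc{M}}_{g,n}^{\circ}$ the extra section does not force passage through such problematic points in a way that breaks relative smoothness.

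Concretely, I would argue via deformation theory: for a family $(X\to S;\sigma_1,\dots,\sigma_n)$ in $\cl{\mc{M}}_{g,n}^{\circ}(S)$, the fiber $C$ is a stable unmarked curve, and forgetting the marking $\sigma_{k+1}$ recovers the same curve $C$ (no contraction of unstable rational tails is needed, precisely because $C$ is already stable without its markings). Thus the fiber of $\Phi|_{\cl{\mc{M}}_{g,n}^{\circ}}$ over $[C]\in\cl{\mc{M}}_g$ is an open subspace of the configuration-type space of $n$ ordered points on $C$ allowed to collide only at smooth points, and smoothness follows by checking that the relative cotangent complex (or the tangent-obstruction theory) of the forgetful morphism is concentrated in degree zero and locally free. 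Equivalently, using Knudsen's isomorphism, the total space $\cl{\mc{M}}_{g,n}^{\circ}$ is regular and the morphism to $\bMg$ has fibers that are smooth (being open loci in products and blow-ups of the smooth curve $C$), so the fibral criterion for smoothness (flat plus smooth fibers, \cite[Tag 01V8]{stacks-project}) applies.

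The main obstacle will be handling the behavior at nodes of $C$: when a marked point section approaches a node of the underlying stable curve, Knudsen's stabilization inserts a rational bridge, and I must confirm that this does not occur within $\cl{\mc{M}}_{g,n}^{\circ}$ in a way that destroys relative smoothness. The decisive fact is that on $\cl{\mc{M}}_{g,n}^{\circ}$ the curve is stable without markings, so a marked point sitting at (or limiting to) a node is governed by the \emph{smooth} deformation theory of the node of a fixed stable curve, and the total space remains regular with the morphism smooth. I would make this precise by a local computation in the \'etale-local model of a nodal family, comparing the tangent space of $\cl{\mc{M}}_{g,n}^{\circ}$ at such a point with that of $\cl{\mc{M}}_g$, and verifying the differential of $\Phi$ is surjective with kernel of the expected rank $n$. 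Once this local smoothness at the delicate nodal configurations is established, the global statement follows immediately by the composition argument.
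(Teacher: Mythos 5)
Your strategy---factoring $\Phi|_{\cl{\mc{M}}_{g,n}^{\circ}}$ into one-point forgetful maps, identifying each step with the universal curve via Knudsen's isomorphism, and applying the fibral criterion for smoothness---is genuinely different from the paper's proof, and it can be made to work. But your handling of the nodal locus, which you yourself single out as ``the main obstacle,'' is wrong as stated, in two places. First, the reduction step: the forgetful map $\cl{\mc{M}}_{g,k+1}\to\cl{\mc{M}}_{g,k}$ restricted to the \emph{full} preimage of $\cl{\mc{M}}_{g,k}^{\circ}$ is not smooth. Under Knudsen's isomorphism this restriction is the universal curve over $\cl{\mc{M}}_{g,k}^{\circ}$, which fails to be smooth exactly at the nodes of the fibers---your own local model $\Spec k[x,y,t]/(xy-t)\to \Spec k[t]$ is not smooth at the origin; equivalently, the forgetful map is not smooth at pointed curves containing a rational bridge over a node. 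Second, and for the same reason, your ``decisive fact''---that a marked point sitting at (or limiting to) a node of an unmarked-stable curve is harmless because ``the total space remains regular with the morphism smooth''---is false: the total space is regular, but the morphism is precisely \emph{not} smooth at such points. The correct resolution is the opposite of what you wrote: such points simply do not occur in $\cl{\mc{M}}_{g,n}^{\circ}$. Marked points of a stable pointed curve are, by definition, distinct \emph{smooth} points of the curve, and on the unmarked-stable locus forgetting a point requires no contraction; hence, under Knudsen's isomorphism, $\cl{\mc{M}}_{g,k+1}^{\circ}$ is exactly the open locus of $\mc{Z}_{g,k}$ lying over $\cl{\mc{M}}_{g,k}^{\circ}$ and consisting of points that are smooth in their fiber and disjoint from the $k$ sections. (For the same reason, your description of the fibers as configurations ``allowed to collide only at smooth points'' is off: a collision produces a rational tail, which is also excluded from $\cl{\mc{M}}_{g,n}^{\circ}$; the fibers are spaces of distinct smooth points.)

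With that correction your argument closes up: $\pi:\mc{Z}_{g,k}\to\cl{\mc{M}}_{g,k}$ is flat and locally of finite presentation, its fibers are reduced nodal curves and hence smooth away from their nodes, so the fibral criterion makes $\pi$ smooth on the open locus just described; each map $\cl{\mc{M}}_{g,k+1}^{\circ}\to\cl{\mc{M}}_{g,k}^{\circ}$ is the restriction of such a smooth map to open substacks, and $\Phi|_{\cl{\mc{M}}_{g,n}^{\circ}}$ is their composition. This is genuinely softer than the paper's proof, which argues pointwise: it identifies $d\Phi_{(C;D)}$ with the middle vertical arrow between the five-term exact sequences of the local-to-global $\on{Ext}$ spectral sequences for $(\Omega^1_C,\O_C(-D))$ and $(\Omega^1_C,\O_C)$, and deduces surjectivity from the snake lemma together with the vanishing of $H^1$ of a sheaf with zero-dimensional support on the normalization. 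The vaguer alternatives you gesture at (cotangent complex, or ``comparing tangent spaces and verifying the differential of $\Phi$ is surjective'' in an \'etale-local model) are not needed once the factorization is set up correctly; the latter is essentially the paper's argument, which you do not actually carry out.
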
	
\begin{proof}
	We must show that for every $(C;D)\in \bMgn$ such that $C$ is stable as a curve without marked points, the differential $d\Phi_{(C;D)}$ of $\Phi$ at $(C;D)$ is surjective.
	
	We will use the theory of first order deformations of nodal curves; see e.g. \cite[\S XI.3]{arbarello2011geometry}. We have the local-to-global $\on{Ext}$ spectral sequences
	\[E_2^{p,q}=H^p(C,\mc{E}xt^q(\Omega_C^1,\O_C(-D)))\Rightarrow H^{p+q}(\Omega_C^1,\mc{O}_C(-D)),\]
	\[E_2^{p,q}=H^p(C,\mc{E}xt^q(\Omega_C^1,\O_C))\Rightarrow H^{p+q}(\Omega_C^1,\mc{O}_C).\]
	The inclusion $\mc{O}_C(-D)\hookrightarrow \mc{O}_C$ induces a homomorphism from the first spectral sequence to the second. Since $C$ is a curve, \[H^2(C,\mc{H}om_{\O_C}(\Omega_C^1,\O_C))=H^2(C,\mc{H}om_{\O_C}(\Omega_C^1,\O_C(-D)))=0.\] Considering the associated five-term short exact sequences, we obtain by functoriality a commutative diagram with exact rows:
	
	\begin{adjustbox}{max size={1\textwidth}{1\textheight}}	
	\begin{tikzcd}
			0 \arrow[r] & H^1(C,\mc{H}om_{\O_C}(\Omega_C^1,\O_C(-D))) \arrow[r] \arrow[d] & \on{Ext}^1_{\O_C}(\Omega_C^1,\O_C(-D)) \arrow[r] \arrow[d] & H^0(C,\mc{E}xt^1_{\O_C}(\Omega^1_C,\O_{C}(-D))) \arrow[r] \arrow[d] & 0\\
			0 \arrow[r] & H^1(C,\mc{H}om_{\O_C}(\Omega_C^1,\O_C)) \arrow[r] & \on{Ext}^1_{\O_C}(\Omega_C^1,\O_C) \arrow[r] & H^0(C,\mc{E}xt^1_{\O_C}(\Omega^1_C,\O_{C})) \arrow[r] & 0.\\
		\end{tikzcd}
	\end{adjustbox}\\
	Here the vertical maps are canonically induced from the inclusion $\O_{C}(-D)\hookrightarrow \O_C$.
	
	Let $\nu:\widetilde{C}\to C$ be the normalization of $C$, $\widetilde{D}=\nu^{-1}(D)$, and $R$ be the inverse image of the nodes of $C$. Since $C$ is stable as an unmarked curve, $(C;D)$ maps to $[C]$ in $\bMg$. An elementary computation shows that $\mc{H}om_{\O_C}(\Omega^1_C,\O_C)=\nu_*T_{\mc{C}}(-R)$ and $\mc{H}om_{\O_C}(\Omega^1_C,\O_C(-D))=\nu_*T_{\widetilde{C}}(-\widetilde{D}-R)$; see \cite[p. 182, p. 186]{arbarello2011geometry}. The sheaves $\mc{E}xt^1_{\O_C}(\Omega^1_C,\O_{C})$ and $\mc{E}xt^1_{\O_C}(\Omega^1_C,\O_{C}(-D))$ are isomorphic and concentrated at the nodes of $C$, and their global sections are given by $\oplus_{q \text{ node}}\on{Ext}^1_{\O_{C,q}}(\Omega^1_{C,q},\O_{C,q})$; see \cite[p. 179]{arbarello2011geometry}. Finally, using the Kodaira-Spencer map, the homomorphism in the middle may be identified with the differential $d\Phi_{(C;D)}:T_{(C;D)}\bMgn\to T_{[C]}\bMg$; see \cite[Theorem XI.3.17]{arbarello2011geometry}. We have obtained the following commutative diagram:
	
	\begin{adjustbox}{max size={1\textwidth}{.95\textheight}}	
		\begin{tikzcd}
			0 \arrow[r] & H^1(\widetilde{C},T_{\widetilde{C}}(-\widetilde{D}-R)) \arrow[r] \arrow[d,"\phi"] & \on{Ext}^1_{\O_C}(\Omega_C^1,\O_C(-D)) \arrow[r] \arrow[d,"d\Phi_{(C;D)}"] & \oplus_{q \text{ node}}\on{Ext}^1_{\O_{C,q}}(\Omega^1_{C,q},\O_{C,q}) \arrow[r] \arrow[d,equal] & 0\\
			0 \arrow[r] & H^1(\widetilde{C},T_{\widetilde{C}}(-R)) \arrow[r] & \on{Ext}^1_{\O_C}(\Omega_C^1,\O_C) \arrow[r] & \oplus_{q \text{ node}}\on{Ext}^1_{\O_{C,q}}(\Omega^1_{C,q},\O_{C,q}) \arrow[r] & 0.\\
		\end{tikzcd}
	\end{adjustbox}\\
where $\phi$ is induced from the natural inclusion $\iota:T_{\widetilde{C}}(-\widetilde{D}-R)\hookrightarrow T_{\widetilde{C}}(-R)$. The rows of this diagram appear in \cite[XI.3 (3.14)]{arbarello2011geometry}.
	By the snake lemma, to prove the surjectivity of $d\Phi_{(C;D)}$ it is enough to show that $\phi$ is surjective. Consider the following short exact sequence on $\widetilde{C}$: \[0\to T_{\widetilde{C}}(-\widetilde{D}-R)\xrightarrow{\iota} T_{\widetilde{C}}(-\widetilde{D})\to \O_R\to 0.\] Since $\O_R$ is supported on a zero-dimensional locus, the associated cohomology long exact sequence \[\dots\to H^1(\widetilde{C},T_{\widetilde{C}}(-\widetilde{D}-R))\xrightarrow{\phi} H^1(\widetilde{C},T_{\widetilde{C}}(-\widetilde{D}))\to H^1(\widetilde{C},\O_R)=0\] shows the surjectivity of $\phi$.
\end{proof}

Let $\bHgn^{\circ}:=\bHgn\cap \bMgn^{\circ}$, and define $\mc{U}_{g,n}\c\bHgn$ as the union of $\mc{H}_{g,n}^{rt}$ and $\bHgn^{\circ}$. The stack $\mc{U}_{g,n}$ is open in $\bHgn$, and it contains $\Hgn$. By \Cref{alldiv}, $\mc{U}_{g,n}$ is dense in every boundary divisor, hence its complement has codimension at least $2$. 

\begin{prop}\label{reg}
	\begin{enumerate}[label=(\alph*)]
		\item The morphism  $\mc{U}_{g,n}\to \bHg$ is smooth.
		\item The stack $\mc{U}_{g,n}$ is smooth. In particular, $\bHgn$ is smooth in codimension one. 
	\end{enumerate} 
\end{prop}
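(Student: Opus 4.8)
The plan is to prove (a) first, as (b) follows almost formally. Since smoothness of a morphism may be checked locally on the source, and $\mc{U}_{g,n}$ is by construction covered by the two open substacks $\mc{H}^{rt}_{g,n}$ and $\bHgn^{\circ}$, it suffices to verify that the restriction of $\mc{U}_{g,n}\to \bHg$ to each of them is smooth. On the first piece this is routine: the morphism to $\bHg$ factors as $\mc{H}^{rt}_{g,n}\to \Hg\hookrightarrow \bHg$, where the first arrow is smooth by \Cref{h-intro}(c) and the second is an open immersion, hence the composition is smooth. The whole content of (a) therefore lies in the second piece.

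On $\bHgn^{\circ}$, the idea is to realize it as a base change of the smooth morphism $\Phi|_{\bMgn^{\circ}}$ furnished by \Cref{surjdiff}. I would form the scheme-theoretic fiber product $Z:=\bMgn^{\circ}\times_{\bMg}\bHg$ along the closed embedding $\bHg\hookrightarrow \bMg$. Since $\Phi|_{\bMgn^{\circ}}$ is smooth, its base change $Z\to \bHg$ is smooth, and because $\bHg$ is smooth over $\C$, the stack $Z$ is itself smooth over $\C$. The plan is then to identify $Z$ with $\bHgn^{\circ}$ and to conclude that $\bHgn^{\circ}\to \bHg$ is smooth.

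The main obstacle is precisely this last identification, since $\bHgn$, and hence $\bHgn^{\circ}$, is defined with its \emph{reduced} structure, whereas $Z$ carries the a priori non-reduced fiber-product structure. The resolution is that smoothness over $\C$ forces $Z$ to be regular, hence reduced; meanwhile, topologically $|Z|$ is the preimage of $|\bHg|$ inside $|\bMgn^{\circ}|$, which by definition of $\bHgn$ is exactly $|\bHgn^{\circ}|$. A reduced substack is determined by its underlying topological space, so $Z=\bHgn^{\circ}$, and the smooth morphism $Z\to \bHg$ is the one sought. Combining the two pieces of the open cover yields (a).

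For (b), once (a) is established the conclusion is immediate. Composing the smooth morphism $\mc{U}_{g,n}\to \bHg$ with the structure map of the smooth stack $\bHg$ shows that $\mc{U}_{g,n}$ is smooth over $\C$. Finally, the discussion preceding the proposition shows that $\mc{U}_{g,n}$ contains $\Hgn$ and is dense in every boundary divisor, so its complement in $\bHgn$ has codimension at least $2$; as $\mc{U}_{g,n}$ is smooth, the singular locus of $\bHgn$ is contained in this complement and therefore also has codimension at least $2$. Hence $\bHgn$ is smooth in codimension one.
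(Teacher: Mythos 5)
Your proof is correct and follows essentially the same route as the paper's: cover $\mc{U}_{g,n}$ by $\mc{H}^{rt}_{g,n}$ and $\bHgn^{\circ}$, handle the first piece via \Cref{h-intro}(c) composed with the open immersion $\Hg\hookrightarrow\bHg$, and realize $\bHgn^{\circ}\to\bHg$ as the base change of $\bMgn^{\circ}\to\bMg$ along $\bHg\hookrightarrow\bMg$, which is smooth by \Cref{surjdiff}, then deduce (b) from smoothness of $\bHg$ and the codimension-two complement. The only difference is that you make explicit the identification of the a priori non-reduced fiber product with the reduced stack $\bHgn^{\circ}$ (smooth $\Rightarrow$ reduced, plus equality of underlying closed subsets), a point the paper's proof asserts without comment.
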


\begin{proof}
	(a) By \Cref{h-intro}, the morphism $\mc{H}_{g,n}^{rt}\to \Hg$ is smooth. The morphism $\bHgn^{\circ}\to \bHg$ is the base change of the morphism $\bMgn^{\circ}\to \bMg$ along the inclusion $\bHg\hookrightarrow \bMg$, hence it is smooth by \Cref{surjdiff}. As smoothness is a local property on the domain, the proof is complete.   
	
	(b) This follows from (a) and the smoothness of $\Hg$.
\end{proof}

Let $f:\mc{X}\to \mc{Y}$ be a morphism of smooth Deligne-Mumford stacks over $\C$, let $\mc{Z}\c \mc{Y}$ be a smooth locally closed substack, and let $q\in \mc{Z}$. We say that $f$ and $\mc{Z}$ are \emph{transverse} at $q$ if for every point $p$ in $f^{-1}(p)$ we have $df(T_p\mc{X})+T_q\mc{Z}=T_q\mc{Y}$; c.f. \cite[p. 18]{eisenbud2016-3264} and \cite{arbarello2011geometry}. By \Cref{reg}, $\mc{U}_{g,n}$ is smooth. The next two lemmas will show that $\mc{U}_{g,n}$ is transverse at every point to all the clutching morphisms $\xi_{\D}$, where $\mc{D}$ is a boundary divisor of $\bMgn$.

\begin{lemma}\label{trans}
Let $(C;D)\in \mc{U}_{g,n}$, and let $\D$ be an irreducible boundary divisor of $\bMgn$ containing $(C;D)$. Then the clutching map $\xi_{\D}$ is transverse to $\mc{U}_{g,n}$ at $(C;D)$.
\end{lemma}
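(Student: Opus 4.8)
The plan is to reduce the transversality statement to a single nonvanishing of tangent maps onto the smoothing parameter of the node created by the clutching morphism, and then to produce such a smoothing inside $\mc{U}_{g,n}$. Write $q=(C;D)$ and let $q_0$ be the node of $C$ that $\xi_{\D}$ glues, so that $\xi_{\D}$ parametrizes exactly the pointed curves carrying this node. As in the proof of \Cref{surjdiff}, the local-to-global $\on{Ext}$ spectral sequence gives an exact sequence
\[
0\to H^1(C,\mc{H}om(\Omega^1_C,\O_C(-D)))\to \on{Ext}^1_{\O_C}(\Omega^1_C,\O_C(-D))\xrightarrow{\partial}\bigoplus_{q'\text{ node}}T^1_{q'}\to 0,
\]
where $T^1_{q'}=\on{Ext}^1_{\O_{C,q'}}(\Omega^1_{C,q'},\O_{C,q'})$ is one-dimensional and records the first-order smoothing of $q'$, and the middle term is $T_q\bMgn$. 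Since $\xi_{\D}$ is finite and fixes the node $q_0$, its differential is injective with image exactly the subspace of deformations that are locally trivial at $q_0$, i.e. $\on{Im}(d\xi_{\D})=\ker(\partial_{q_0})$, where $\partial_{q_0}$ denotes the component of $\partial$ at $q_0$. As $T_q\bMgn/\ker(\partial_{q_0})\cong T^1_{q_0}$ is one-dimensional, transversality of $\xi_{\D}$ and $\mc{U}_{g,n}$ at $q$ is equivalent to the assertion that $\partial_{q_0}$ is nonzero on $T_q\mc{U}_{g,n}$; that is, that $\mc{U}_{g,n}$ contains a first-order deformation of $(C;D)$ smoothing $q_0$.

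Next I would exploit the fact that $\mc{U}_{g,n}=\mc{H}^{rt}_{g,n}\cup\bHgn^{\circ}$ excludes all curves that carry simultaneously a rational tail and a persistent node (such curves form a locus of codimension at least $2$, as noted before \Cref{reg}). Consequently every $(C;D)\in\mc{U}_{g,n}$ lying on a boundary divisor falls into exactly one of two cases, and the node $q_0$ is of the corresponding type. If $(C;D)\in\mc{H}^{rt}_{g,n}$, then $q_0$ is contracted by $\cl{\pi}_n$ and lies internal to the fiber of $\mc{H}^{rt}_{g,n}\to\Hg$ through $(C;D)$; by \Cref{h-intro} this fiber is the Fulton--MacPherson compactification of $F(\cl C,n)$ for a smooth $\cl C$, which is smooth with normal-crossing boundary, so its relative tangent space already contains a direction smoothing $q_0$, and this direction lies in $T_q\mc{U}_{g,n}$. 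If instead $(C;D)\in\bHgn^{\circ}$, then $q_0$ persists in the stable model $\cl C\in\bHg$, and by \Cref{reg}(a) the morphism $\mc{U}_{g,n}\to\bHg$ is smooth at $q$, so $T_q\mc{U}_{g,n}\to T_{[\cl C]}\bHg$ is surjective and compatible with the smoothing maps $\partial_{q_0}$. It then suffices to exhibit a first-order deformation of $[\cl C]$ inside $\bHg$ with nonzero smoothing parameter at $q_0$.

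The main obstacle is this last step in the persistent case, and in particular the nodes of type $\eta_i$. The clean part is that $\bHg$ is smooth and the boundary divisor of $\bHg$ through $[\cl C]$ is the equisingular locus of $q_0$, so any deformation transverse to it leaves the nodal locus and hence turns on the smoothing parameter of $q_0$; for the isolated nodes of type $\eta_{irr}$ and $\delta_i$ this immediately gives $\partial_{q_0}\neq 0$. The subtle point is that when $q_0$ belongs to a conjugate pair $q_1,q_2$ (type $\eta_i$), no deformation in $\bHg$ can smooth $q_1$ without simultaneously smoothing $q_2$, since deformations of a hyperelliptic curve must respect the involution. The resolution is that equivariance forces the smoothing parameters at $q_1$ and $q_2$ to coincide along a transverse deformation, so its image in $T^1_{q_1}\oplus T^1_{q_2}$ is a nonzero multiple of the diagonal; projecting to the single factor $T^1_{q_0}$ is therefore still nonzero, which is all that transversality requires. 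The one genuinely technical input is the precise identification of the boundary divisor of $\bHg$ through $[\cl C]$ with the equisingular locus of $q_0$ and the verification that a transverse direction genuinely activates the smoothing parameter; this I would extract from the local admissible-cover (double-cover) description of $\bHg$ at the node, which shows that smoothing $q_0$ upstairs corresponds to a smoothing of the genus-$0$ quotient and is unobstructed.
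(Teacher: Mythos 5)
Your proposal is correct, and its core reduction is the same as the paper's: both identify $\on{Im}d(\xi_{\D})_p$, via the clutching maps being immersions, with the codimension-one kernel of the smoothing parameter $\partial_{q_0}$ of the glued node, so that transversality becomes the existence of a single tangent vector of $\mc{U}_{g,n}$ on which $\partial_{q_0}$ is nonzero. The difference is in how that vector is produced. The paper does it in one stroke: it takes $v\in T_{[\cl{C}]}\bHg$ smoothing \emph{all} nodes of the stable model simultaneously and lifts $v$ to $T_{(C;D)}\mc{U}_{g,n}$ through the submersion coming from \Cref{surjdiff} and \Cref{m-intro}(c); because every node gets smoothed, no case analysis on the type of the glued node is needed, and in particular the $\eta_i$ conjugate-pair subtlety to which you devote the most care never arises (a vector smoothing all nodes smooths both members of each pair). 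Your version smooths only the one glued node, which is why you must argue that the image in $T^1_{q_1}\oplus T^1_{q_2}$ is a nonzero multiple of the diagonal and projects nontrivially to one factor; this is correct, just more work than the paper's uniform argument requires. Conversely, your separate treatment of the rational-tails case is more careful than the paper's: the paper's phrase ``a first-order deformation of $C$ as a point of $\bHg$ that smooths all the nodes of $C$'' only makes literal sense on $\bHgn^{\circ}$, where the nodes of $C$ survive in the stable model; on $\mc{H}^{rt}_{g,n}$ the stable model is smooth, a lift of a vector of $T\Hg$ is determined only modulo vertical directions and need not smooth the contracted nodes, so one genuinely needs a vertical direction in the Fulton--MacPherson fiber, exactly as you supply. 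Two small repairs to your write-up: injectivity of $d(\xi_{\D})_p$ follows from the clutching maps being unramified (Knudsen, Corollary 3.9), not from their being finite; and, in both of your cases, the claim that a direction transverse to the relevant boundary divisor has \emph{nonzero} smoothing parameter (rather than one vanishing to higher order) needs the local form $xy=t$ of the universal family with $t$ a local parameter on the base --- the same standard input on the boundary structure of $\bHg$ and of the Fulton--MacPherson space that the paper itself invokes implicitly, so acknowledging it as your ``technical input'' is the right call.
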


\begin{proof}
If $(C;D)\in \bHgn^{\circ}$, it follows from \Cref{surjdiff} that the forgetful morphism $\Phi:\bMgn\to \bMg$ is an orbifold submersion at $(C;D)$. If $(C;D)\in \Hgn^{rt}$, by \Cref{m-intro}(c) the forgetful morphism $\Phi$ is again an orbifold submersion around $(C;D)$. Since $\bHgn=\Phi^{-1}(\bHg)$ (set-theoretically), $\mc{U}_{g,n}$ is locally the inverse image of $\bHg$. Therefore, in each case we obtain \[T_{(C;D)}\mc{U}_{g,n}=T_{(C;D)}\Phi^{-1}(\bHg)=d\Phi_{(C;D)}^{-1}(T_C\bHg).\]
	
By \cite[Corollary 3.9]{knudsen1983projectivity}, $\xi_{\D}$ is unramified, so if $p$ is a point of the domain of $\xi_{\D}$ mapping to $(C;D)$, $d(\xi_{\D})_p$ is injective and $\on{Im}d(\xi_{\D})_p$ has codimension one in $T_{(C;D)}\bMgn$. Therefore, to prove transversality of $\xi_{\D}$ it suffices to show that there is a vector $v\in T_{(C;D)}\mc{U}_{g,n}$ that does not belong to the image of $d(\xi_{\D})_p$. Since $(C;D)$ belongs to $\D$, the curve $C$ has at least one node, so consider a first-order deformation of $C$ as a point of $\bHg$ that smooths all the nodes of $C$. Via the Kodaira-Spencer map, this deformation corresponds to a tangent vector $v\in T_{[C]}\bHg$. By \Cref{surjdiff}, we may lift $v$ to a vector $w\in T_{(C;D)}\mc{U}_{g,n}$. Since not all nodes may be smoothed inside $\D$, the vector $w$ does not belong to $\on{Im}d(\xi_{\D})_p$.
\end{proof}

Let $j:\bHgn\hookrightarrow \bMgn$ be the natural inclusion, and let $j^*$ and $\cl{\pi}_n^*$ denote the composition $\on{Pic}(\bMgn)_{\Q}\to \on{Pic}(\bHgn)_{\Q}\to \on{Cl}(\bHgn)_{\Q}$. Recall that since we do not know if $\bHgn$ is smooth, we do not know if $\on{Pic}(\bHgn)_{\Q}\to\on{Cl}(\bHgn)_{\Q}$ is an isomorphism, or even injective. 

\begin{prop}\label{intersection}
We have the following identities in $\on{Cl}(\bHgn)_{\Q}$: \[j^*\D_{i,I}=\delta_{i,I},\qquad j^*\D_{irr,n}=\eta_{irr,n}+2\sum_{i,I}\eta_{i,I}.\]
\end{prop}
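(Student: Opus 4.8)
The plan is to work on the smooth dense open substack $\mc{U}_{g,n}\subseteq\bHgn$ produced in \Cref{reg}. Its complement has codimension at least two, so the restriction $\on{Cl}(\bHgn)_{\Q}\to\on{Cl}(\mc{U}_{g,n})_{\Q}$ is an isomorphism, and on the smooth stack $\mc{U}_{g,n}$ the pullback $j^*\D$ of the Cartier divisor $\D$ is computed as an honest Weil divisor. It therefore suffices to determine the coefficient of each boundary divisor of $\bHgn$ in $j^*\D_{i,I}$ and $j^*\D_{irr,n}$ at its generic point. I will use the standard local model of the boundary of the smooth stack $\bMgn$: near a curve with nodes $q_1,\dots,q_k$ there are versal smoothing parameters $t_1,\dots,t_k$, the locus where $q_\ell$ remains a node is the reduced Cartier divisor $V(t_\ell)$, and an irreducible boundary divisor $\D$ of $\bMgn$ equals, near such a curve, the sum $\sum_\ell V(t_\ell)$ over those nodes $q_\ell$ whose local type is the one defining $\D$.

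For $\D_{i,I}$ the computation is immediate. By \Cref{alldiv}, the general hyperelliptic curve lying in $\D_{i,I}$ has a single separating node of the relevant type, so there is exactly one branch $V(t)$ of $\D_{i,I}$ through the generic point of $\delta_{i,I}$, and $\delta_{i,I}$ is the only boundary divisor of $\bHgn$ whose generic point lies in $\D_{i,I}$. Transversality of $\xi_{\D_{i,I}}$ to $\mc{U}_{g,n}$ (\Cref{trans}) means that this branch meets $\mc{U}_{g,n}$ transversally; hence $t$ restricts to a reduced local equation and $j^*\D_{i,I}=\delta_{i,I}$.

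The identity for $\D_{irr,n}$ is the crux, and the coefficient two is the only delicate point. By \Cref{alldiv}, the boundary divisors of $\bHgn$ mapping into $\D_{irr,n}$ are $\eta_{irr,n}$, whose general curve is irreducible with one non-separating node, and the $\eta_{i,I}$, whose general curve $C$ carries a conjugate pair $q_1,q_2$ of non-separating nodes exchanged by the hyperelliptic involution $\sigma$. The guiding principle is that the coefficient of such a divisor in $j^*\D_{irr,n}$ equals the number of non-separating nodes of its general curve. For $\eta_{irr,n}$ there is a single node, so a single branch $V(t)$ of $\D_{irr,n}$, and transversality gives coefficient one exactly as above. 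For $\eta_{i,I}$ the reduced divisor $\D_{irr,n}$ splits near $C$ as $V(t_1)+V(t_2)$, because smoothing either one of the two nodes while retaining the other yields an irreducible one-nodal curve in the interior of $\D_{irr,n}$; equivalently, the fiber of $\xi_{\D_{irr,n}}\colon\cl{\mc{M}}_{g-1,n+2}\to\bMgn$ over $(C;D)$ has two points, according to which of $q_1,q_2$ is the clutched node. Since $\sigma$ exchanges $q_1$ and $q_2$ it exchanges $t_1$ and $t_2$, so on the $\sigma$-invariant locus $\mc{U}_{g,n}$ the two smoothing parameters restrict to the same coordinate, $t_1|_{\mc{U}_{g,n}}=t_2|_{\mc{U}_{g,n}}$ up to a unit, namely the single smoothing parameter $s$ of the conjugate pair (here the smoothness of $\mc{U}_{g,n}\to\bHg$ from \Cref{reg} reflects $\bHg$ near $[C]$ as the $\sigma$-invariant deformations of $C$). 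Applying \Cref{trans} at each of the two preimages, the branches $V(t_1)$ and $V(t_2)$ meet $\mc{U}_{g,n}$ transversally, so each restricts to $V(s)=\eta_{i,I}$ with multiplicity one; adding the two branches gives $j^*\D_{irr,n}=V(t_1)|_{\mc{U}_{g,n}}+V(t_2)|_{\mc{U}_{g,n}}=2\,\eta_{i,I}$ near the generic point of $\eta_{i,I}$. Summing the local contributions over all of $\eta_{irr}$ yields $j^*\D_{irr,n}=\eta_{irr,n}+2\sum_{i,I}\eta_{i,I}$.

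The main obstacle is precisely this coefficient-two analysis: one must verify that $\D_{irr,n}$ genuinely has two distinct branches through the $\eta_i$-locus, that both restrict to the same reduced divisor $\eta_{i,I}$ on the hyperelliptic side, and that the factor of two is the sum of two transverse contributions rather than a hidden non-reducedness. This is where the $\sigma$-equivariance of the versal deformation, which identifies $t_1$ and $t_2$ on $\mc{U}_{g,n}$, together with the transversality of \Cref{trans} applied at each point of the clutching fiber, is indispensable.
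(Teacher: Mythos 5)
Your proposal is correct and follows essentially the same route as the paper's proof: reduce to the smooth open substack $\mc{U}_{g,n}$ whose complement has codimension two, use the versal smoothing parameters so that $\D_{i,I}$ has local equation $t_1$ and $\D_{irr,n}$ has local equation $t_1t_2$ at a general point of $\eta_{i,I}$, and apply the transversality of \Cref{trans} at each point of the clutching fiber to see that each factor restricts to a reduced local equation, yielding the coefficients $1$ and $2$. The only cosmetic difference is how the two branches are identified on the hyperelliptic side: you argue that the hyperelliptic involution exchanges $t_1$ and $t_2$, so their restrictions to $\mc{U}_{g,n}$ agree up to a unit, whereas the paper deduces $(\cl{t}_1)=(u)=(\cl{t}_2)$ from the fact that both restrictions lie in $\mathfrak{m}\setminus\mathfrak{m}^2$ (hence generate prime ideals) and cut out, set-theoretically, the unique boundary divisor through the point --- both justifications are sound.
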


\begin{proof}
	Since the complement of $\mc{U}_{g,n}$ has codimension $2$ in $\bHgn$, the restriction map \[\on{Cl}(\bHgn)_{\Q}\to \on{Cl}(\mc{U}_{g,n})_{\Q}\] is an isomorphism. It thus suffices to prove the relations in $\on{Cl}(\mc{U}_{g,n})_{\Q}$. 
	
	Let $(C;D)\in \bMgn$, let $q_1,\dots,q_r$ be the nodes of $C$, and let $N$ be the dimension of $\on{Ext}^1_{\O_C}(\Omega^1_C,\O_C(-D))$. We recall some standard facts on universal deformations of $n$-pointed stable curves; good references on the topic are \cite[p. 82]{deligne1969irreducibility} (when $n=0$), or \cite[3.B]{harris2006moduli}. There exists a universal deformation $(\tilde{C}\to \Spec \hat{\O}_M;\tilde{D})$ of $(C;D)$ with nodes $\tilde{q}_1,\dots \tilde{q}_r$, where $\hat{\O}_M$ is the completed local ring of $\bMgn$ at $(C;D)$. The ring $\hat{\O}_M$ is (non-canonically) isomorphic to the ring of complex power series in $N$ variables. More precisely, one can find a presentation \[\hat{\O}_M\cong\C[\![t_1,\dots,t_r,t_{r+1},\dots,t_N]\!]\] such that for every $i=1,\dots,r$ we have \[\hat{\O}_{\mc{C},\tilde{q_i}}\cong \C[\![u_i,v_i,t_1,\dots,t_N]\!]/(u_iv_i-t_i)\] for suitable $u_i,v_i$. Informally speaking, $\Spec \hat{\O}_M/(t_i)$ is the locus where $q_i$ remains a node. 
	
	Assume now that $(C;D)\in \mc{U}_{g,n}$. Since $\mc{U}_{g,n}$ is locally closed in $\bMgn$, the completed stalk $\hat{\O}_H$ of $\mc{U}_{g,n}$ at $(C;D)$ is a quotient of $\hat{\O}_M$. By \Cref{reg}(b), $\mc{U}_{g,n}$ is smooth, hence $\hat{\O}_H$ is also a power series ring. If $t\in \hat{\O}_M$, we denote by $\cl{t}$ its reduction in $\hat{\O}_H$. We also let $\mathfrak{m}$ be the maximal ideal of $\hat{\O}_H$. If $\mc{Z}\c \bMgn$ is a locally closed substack passing through $(C;D)$, we denote by $\hat{\mc{Z}}$ its base change to $\Spec\hat{\O}_M$. 
	
	Let $\mc{D}$ be a boundary divisor through $\bMgn$, let $\mc{D}_0:=\D\cap \mc{U}_{g,n}$, let $\delta$ be a boundary divisor of $\mc{U}_{g,n}$ which appears as an irreducible component of $\mc{D}_0$, and assume that $(C;D)\in \delta$ is a general point. As $\bMgn$ and $\mc{U}_{g,n}$ are smooth, $\mc{D}$ and $\delta$ are Cartier divisors in $\bMgn$ and $\mc{U}_{g,n}$, respectively. Since $(C;D)$ is general, $\hat{\delta}$ is irreducible. Therefore, we have $\hat{\D}=\Spec\hat{\O}_M/(t)$ for some non-zero divisor $t$, $\hat{\D}_0=\Spec\hat{\O}_H/(\cl{t})$, and $\hat{\delta}=\Spec \hat{\O}_H/(u)$ for some $u$ such that $(u)$ is a prime ideal of $\hat{\O}_H$. 
	
	Since $\D_0$ is a Cartier divisor, we may compute the divisor class $j^*\D$ by computing the multiplicities of $\D_0$ at its irreducible components; see \cite[Tags 0AZA, 0B05, 0DR4]{stacks-project}. The multiplicity of $\mc{D}_0$ along $\delta$ equals the length of the ring $\hat{\O}_{\D_0,\delta}\cong(\hat{\O}_{H})_{(u)}/(\cl{t})$, that is, the natural number $a$ such that $(\cl{t})=(u)^a$ (note that $(\hat{\O}_{H})_{(u)}$ is a discrete valuation ring with uniformizing parameter $u$). To conclude the proof, it suffices to compute the number $a$ in all cases. 
	
	Assume first that $\D=\D_{i,I}$. Since $\D_{i,I}$ does not contain the general point of any boundary divisor of $\mc{U}_{g,n}$ other than $\delta_{i,I}\cap \mc{U}_{g,n}$, we need only consider the case when $\delta=\delta_{i,I}\cap \mc{U}_{g,n}$. Then $t=t_1$, and the image of $d\xi_{\D_{i,I}}$ at $(C;D)$ is the hyperplane $t_1=0$ (recall that clutching maps are unramified by \cite[Corollary 3.9]{knudsen1983projectivity}). By \Cref{trans}, we know that $\D_{i,I}$ intersects $\mc{U}_{g,n}$ transversely at $(C;D)$. This means exactly that $\cl{t}_1\in \mathfrak{m}\setminus \mathfrak{m}^2$, and so we may complete $\cl{t}_1$ to a system of regular parameters for $\hat{\O}_H$. In particular, $(\cl{t}_1)$ is a prime ideal, hence $a=1$. This proves the first equality.
	
	Assume now that $\D=\D_{irr,n}$. Set-theoretically, $\D_{irr,n}\cap \mc{U}_{g,n}$ coincides with the union of all the $\eta_{i,I}\cap \mc{U}_{g,n}$ and $\eta_{irr,n}\cap\mc{U}_{g,n}$. If $\delta=\eta_{irr}\cap\mc{U}_{g,n}$, then $t=t_1$ and $a=1$, the proof being identical to that of the previous case. 
	
	Consider the case when $\delta=\eta_{i,I}\cap\mc{U}_{g,n}$. Then $t=t_1t_2$, and the image of $d\xi_{\D_{i,I}}$ at $(C;D)$ is the union of the hyperplanes $t_1=0$ and $t_2=0$. By \Cref{trans}, the clutching map $\xi_{\D_{irr,n}}$ is transverse to $\mc{U}_{g,n}$ at $(C;D)$, hence both hyperplanes are transverse to $\mc{U}_{g,n}$ at $(C;D)$. As before, this means that $\cl{t}_1,\cl{t}_2\in \mathfrak{m}\setminus\mathfrak{m}^2$, and so $(\cl{t}_1)$ and $(\cl{t}_2)$ are prime ideals. It follows that $(\cl{t}_1)=(u)=(\cl{t}_2)$, hence $(\cl{t}_1\cl{t}_2)=(u)^2$ and $a=2$. This proves the second equality.
\end{proof}

\section{Proof of Theorem \ref{hhg}(b)}
To complete the proof of \Cref{hhg}, we will use the method of test curves, as explained in \cite[Lemma 3.94]{harris2006moduli}, in conjunction with \Cref{intersection}.

\begin{proof}[Proof of \Cref{hhg}(b)]
	By \Cref{picgen}, it suffices to prove that $\psi$-classes and irreducible boundary divisors are linearly independent in $\on{Cl}(\bHgn)_{\Q}$. Since $\mc{U}_{g,n}$ is smooth and its complement has codimension $2$, we have  $\on{Cl}(\bHgn)_{\Q}=\on{Cl}(\mc{U}_{g,n})_{\Q}=\on{Pic}(\mc{U}_{g,n})_{\Q}$. Let \begin{equation}\label{divisor}D=\sum a_i\psi_i+\sum b_{i,I}\delta_{i,I}+c_{irr,n}\eta_{irr,n}+\sum c_{i,I}\eta_{i,I}\in\on{Pic}(\mc{U}_{g,n})_{\Q}\end{equation} and assume that $D=0$. We may assume that all the coefficients are integers. We will show that each coefficient is zero by computing the degree of $D$ on suitable families of test curves. In order to compute degrees, we need to make sure that our test families are entirely contained in $\mc{U}_{g,n}$.
	
	 By \Cref{hhg}(a), the $\psi$-classes are independent in $\on{Pic}(\mc{H}_{g,n})_{\Q}$, hence restriction to $\Hgn$ shows that $a_i=0$ for each $i=0,\dots,n$. We split the rest of the proof in several lemmas.
	
	\begin{lemma}\label{lemma1}
We have $b_{0,I}=0$ if $|I|=2$.	
\end{lemma}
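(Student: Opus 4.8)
The plan is to restrict the relation $D=0$ to the divisor $\delta_{0,I}$ itself and to read off its coefficient from the self-intersection (normal bundle) formula. Fix $I=\{j,k\}$. By \Cref{alldiv}, a general point of $\delta_{0,I}$ is a curve $(C;D)$ consisting of a smooth hyperelliptic curve $C_0$ of genus $g$ carrying the points $p_i$ for $i\neq j,k$, together with a rational tail attached at a point $x\in C_0$ and carrying $p_j$ and $p_k$. Since this tail has exactly three special points (the node and $p_j,p_k$), it is rigid. Consequently the open substack $V\c\delta_{0,I}$ on which $C_0$ is smooth and the points $x,\{p_i\}_{i\neq j,k}$ are pairwise distinct is isomorphic to the corresponding open substack of $\mathcal{H}_{g,n-1}$ parametrizing $(C_0;\{p_i\}_{i\neq j,k},x)$, with $x$ in the role of the $(n-1)$-st marked point. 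In particular $V$ is smooth and contained in $\mc{U}_{g,n}$ (by \Cref{reg}), so the relation $D=0$, which holds in $\on{Pic}(\mc{U}_{g,n})_{\Q}$, restricts to a relation in $\on{Pic}(V)_{\Q}$.

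First I would record how each term of $D$ restricts to $V$. Every boundary divisor other than $\delta_{0,I}$ is supported, on $\delta_{0,I}$, either where $C_0$ acquires a node (the divisors $\delta_{i,I'}$, $\eta_{i,I'}$, $\eta_{irr,n}$) or where two of the points $x,\{p_i\}_{i\neq j,k}$ collide (the divisors $\delta_{0,I'}$ with $I'\neq I$). All of these loci are disjoint from $V$, so each such divisor restricts to $0$ in $\on{Pic}(V)_{\Q}$. The $\psi$-classes contribute nothing because we have already shown $a_i=0$. Hence on $V$ the relation collapses to $b_{0,I}\,(\delta_{0,I}|_{V})=0$, and everything comes down to computing $\delta_{0,I}|_{V}$.

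The key computation is the self-intersection $\delta_{0,I}|_{\delta_{0,I}}$. I would use the standard description of the normal bundle of a boundary divisor as the tensor product of the tangent lines to the two branches at the node, whose first Chern class is $-\psi'-\psi''$, where $\psi',\psi''$ are the cotangent classes at the two branches. One branch is the attaching point $x$ on the genus-$g$ curve, contributing $-\psi_x$, where $\psi_x$ is exactly the $\psi$-class of the $(n-1)$-st marked point under the identification $V\cong\mathcal{H}_{g,n-1}$; the other branch lies on the rigid rational tail, which carries no moduli over $V$, so its cotangent line is pulled back from a point and its contribution vanishes. Therefore $\delta_{0,I}|_{V}=-\psi_x$ in $\on{Pic}(V)_{\Q}$. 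By \Cref{generate} (that is, \Cref{hhg}(a)) applied to $\mathcal{H}_{g,n-1}$, the class $\psi_x$ is non-zero, so $b_{0,I}=0$, as desired.

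The main obstacle is justifying the two geometric claims cleanly: that the rigidity of the $3$-pointed tail yields the identification $V\cong\mathcal{H}_{g,n-1}$ (one must check that the automorphisms match, e.g.\ that the hyperelliptic involution of $C_0$ does not descend to an automorphism of the pointed curve for general $x$), and that the self-intersection formula $\delta_{0,I}|_{\delta_{0,I}}=-\psi_x$ holds on $V$ with the rational-tail branch contributing nothing. Both are local computations at the node; here the reduction afforded by \Cref{intersection}, together with the smoothness of $\mc{U}_{g,n}$, ensures that restricting divisor classes to $V$ is simply pullback of line bundles and is therefore well behaved. I note that passing to the \emph{open} substack $V$, rather than to a complete test curve sweeping out $\delta_{0,I}$, is what keeps the argument clean: a complete family obtained by letting $x$ vary over all of $C_0$ would also meet the divisors $\delta_{0,I\cup\{i\}}$ at the points $x=p_i$, reintroducing the unknown coefficients $b_{0,I\cup\{i\}}$ with $|I\cup\{i\}|=3$, which are only treated in the subsequent lemmas.
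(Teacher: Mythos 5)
Your proof is correct, but it takes a genuinely different route from the paper. The paper proves \Cref{lemma1} with an explicit complete test family: it blows up $C\times C$ at the points $(p_h,p_h)$, marks the fibers by the constant sections and the diagonal, observes that the resulting family lies in $\mc{H}^{rt}_{g,n}$ and meets only the divisors $\delta_{0,\set{h,n}}$ (each transversally, by \Cref{intersection} and the degree formulas of Harris--Morrison), and then uses the arbitrariness of the points $p_h$ to kill each coefficient. You instead restrict the relation $D=0$ to the generic part $V$ of the divisor $\delta_{0,I}$ itself, identify $V$ with $\mc{H}_{g,n-1}$ via the rigid three-pointed tail, note that every other boundary divisor is disjoint from $V$ so its line bundle restricts trivially, and compute $\delta_{0,I}|_V=-\psi_x$ by the standard clutching/normal-bundle formula $c_1(\xi_{\D}^*\O(\D))=-\psi'-\psi''$, with the tail branch contributing nothing since $\cl{\mc{M}}_{0,3}$ is a point; the conclusion then follows from the linear independence of $\psi$-classes on $\mc{H}_{g,n-1}$ (\Cref{generate}, available for $n-1\geq 1$ since $|I|=2$ forces $n\geq 2$, and with no circularity since \Cref{generate} is proved for all $n$ before \Cref{hhg}(b)). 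Two small points deserve care in your write-up. First, your ``corresponding open substack of $\mc{H}_{g,n-1}$'' is in fact \emph{all} of $\mc{H}_{g,n-1}$: attaching the rigid tail at the last marked point of any $(C_0;\set{p_i},x)$ lands in $V$, and this matters, because the non-vanishing of $\psi_x$ is only guaranteed on the full stack (restriction to a proper open substack could a priori kill a Picard class). Second, your worry about the hyperelliptic involution is a red herring: the identification of stacks only requires that automorphism groups of the glued pointed curve agree with those of $(C_0;\set{p_i},x)$, which they do since the tail is automorphism-free; triviality of these groups is irrelevant. What each approach buys: yours avoids constructing families and cross-term bookkeeping, and implicitly computes the self-intersection $\delta_{0,I}|_{\delta_{0,I}}$, at the price of invoking the normal-bundle formula for clutching maps, which the paper never needs; the paper's computation is less slick but stays entirely within the toolkit (\Cref{intersection} plus degree computations on families) that it reuses uniformly in Lemmas \ref{lemma2}--\ref{lemma5}.
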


\begin{proof}
	Let $C$ be a smooth hyperelliptic curve of genus $g$, and let $p_1,\dots,p_{n-1}$ be distinct points of $C$. Denote by $X$ the blow-up of $C\times C$ at $(p_1,p_1),\dots,(p_{n-1},p_{n-1})$. For $1\leq h\leq n-1$, let $\sigma_h$ be the proper transform of $\set{p_h}\times C$ in $X$, and let $\sigma_n$ be the proper transform of the diagonal in $C\times C$. Letting $\pi:X\to C$ be the composition of the blow-up $X\to C\times C$ and the second projection $C\times C\to C$, we obtain a family $\rho_1=(\pi:X\to C;\sigma_1,\dots,\sigma_n)$ of $n$-pointed stable hyperelliptic curves of genus $g$. Note that $\rho_1$ is contained in $\Hgn^{rt}$, hence in $\mc{U}_{g,n}$. By \Cref{intersection} and \cite[Lemma 3.94]{harris2006moduli}, we see that for this family \[\delta_{0,\set{h,n}}(\rho_1)=j^*\D_{0,\set{h,n}}(\rho_1)=\O_C(p_{h}), \quad 1\leq h\leq n-1,\] and all other divisors are trivial. We obtain the relation \[\O_C(\sum_{h=1}^{n-1} b_{0,\set{h,n}}p_h)=0.\] The points $p_h\in C$ were arbitrarily chosen, therefore $b_{0,\set{h,n}}=0$ for $h=1,\dots,n-1$. Changing order of the sections, we obtain $b_{0,I}=0$ for every $I$ of cardinality $2$, as desired.
\end{proof}

\begin{lemma}\label{lemma2}
	We have $b_{0,I}=0$ for every $I$ such that $|I|\geq 2$.
\end{lemma}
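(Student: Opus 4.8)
The plan is to argue by induction on $|I|$, taking \Cref{lemma1} as the base case $|I|=2$. So fix $I$ with $|I|=k\geq 3$ and assume that $b_{0,J}=0$ for every $J$ with $2\leq |J|\leq k-1$. To isolate $b_{0,I}$, I would use a test curve lying \emph{inside} the divisor $\delta_{0,I}$ itself. Fix a smooth hyperelliptic curve $C$ of genus $g$, a general point $q\in C$, and distinct points of $C$ carrying the markings indexed by $I^c$. Attach a rational tail $R\cong \P^1$ at $q$, place the node and the markings $i_2,\dots,i_k$ at fixed distinct points of $R$, and let the remaining marking $i_1$ vary over $R$; compactifying the base $\P^1$ (bubbling off whenever $i_1$ collides with the node or with another marking) produces a complete family $\rho$ of $n$-pointed stable hyperelliptic curves. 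Since $C$ stays smooth and fixed, $\rho$ is contained in $\mc{H}^{rt}_{g,n}\subseteq \mc{U}_{g,n}$, where degrees may be computed because $\mc{U}_{g,n}$ is smooth and $\delta_{0,I}$ is Cartier by \Cref{reg}.

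Next I would compute $D\cdot \rho$. Because every curve in $\rho$ has smooth genus-$g$ component $C$, the family meets none of the divisors $\eta_{irr,n}$, $\eta_{i,I}$ or $\delta_{i,I}$ with $i\geq 1$; moreover the $\psi$-coefficients $a_i$ already vanish. As the markings in $I^c$ remain fixed and distinct on $C$ and only $i_1$ moves on the tail, the only boundary divisors meeting $\rho$ are the $\delta_{0,J}$ with $J\subseteq I$. Hence \Cref{intersection} together with \cite[Lemma 3.94]{harris2006moduli} gives $0=D\cdot \rho=\sum_{J\subseteq I} b_{0,J}(\delta_{0,J}\cdot \rho)$. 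The crucial, and initially counterintuitive, point is that although $\rho\subseteq \delta_{0,I}$, it does meet the smaller divisors $\delta_{0,J}$ for $J\subsetneq I$: precisely at the boundary points of the base where $i_1$ collides with one of $i_2,\dots,i_k$, a sub-bubble carrying a proper subset $J\subsetneq I$ splits off, and by the dual-graph description of \Cref{alldiv} such a point lies in $\delta_{0,J}$. The inductive hypothesis is exactly what neutralizes these unavoidable contributions: every such $J$ has $2\leq |J|\leq k-1$, so $b_{0,J}=0$ and only the $J=I$ term survives, leaving $0=b_{0,I}\,(\delta_{0,I}\cdot \rho)$.

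It then remains to check that $\delta_{0,I}\cdot \rho$ is nonzero. Since $\rho\subseteq \delta_{0,I}$, this number is the degree of the normal bundle $N_{\delta_{0,I}}$ along $\rho$, whose first Chern class is $-\psi'-\psi''$, the negative sum of the cotangent lines at the two branches of the separating node joining $C$ to $R$. The $C$-side class $\psi'$ is the pullback of the cotangent line of the fixed curve $C$ at the fixed point $q$, hence is trivial on $\rho$ and contributes $0$. The $R$-side class $\psi''$ is the $\psi$-class at the node on the tail; restricted to $\rho$, which is exactly a fiber of the forgetful morphism $\cl{\mc{M}}_{0,k+1}\to \cl{\mc{M}}_{0,k}$ forgetting $i_1$, it has degree $1$ (using $\psi''=\pi^*\psi''+\delta_{0,\{i_1,\bullet\}}$ and the fact that $\delta_{0,\{i_1,\bullet\}}$ meets the fiber transversally in one point). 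Thus $\delta_{0,I}\cdot \rho=-1\neq 0$, forcing $b_{0,I}=0$; relabeling the markings then yields $b_{0,I}=0$ for every $I$ of cardinality $k$, completing the induction. The main obstacle is therefore not the final degree computation but the bookkeeping of which boundary divisors the test curve meets, and in particular recognizing that the descent into the smaller $\delta_{0,J}$ is unavoidable and must be absorbed by the inductive hypothesis rather than eliminated by a cleverer choice of family.
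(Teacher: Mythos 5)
Your proof is correct, and it closes the recursion in a genuinely different way from the paper. The paper's test family $\rho_2$ also lies entirely inside $\delta_{0,I}$ and also has $\deg_{\rho_2}\delta_{0,I}=-1$, but there the moving marking (the diagonal section $\sigma_n$) travels on the \emph{genus-$g$} component: its collisions with the fixed markings create bubbles marked $\set{j,n}$ with $j\in I^c$, and its collision with the attaching node lands in $\delta_{0,I\cup\set{n}}$, so the resulting relation reads $b_{0,I}=b_{0,I\cup\set{n}}$ once \Cref{lemma1} kills the size-two terms; one then concludes that $b_{0,I}$ is independent of $I$ by permuting markings, and the base case finishes. In your family the moving marking travels on the \emph{rational tail}, so every degeneration descends into some $\delta_{0,J}$ with $J\subsetneq I$ and $|J|\geq 2$, giving a clean downward induction on $|I|$ with no need for the permutation-symmetry step. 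The trade-off: your route takes as input the self-intersection formula $\xi_{\D_{0,I}}^*\O(\D_{0,I})=-\psi_*-\psi_\star$ for the clutching map and the genus-zero comparison $\psi_*=\pi^*\psi_*+\delta_{0,\set{i_1,*}}$ on $\cl{\mc{M}}_{0,k+1}$ (both standard), whereas the paper extracts the same key number $-1$ from explicit surface geometry (self-intersections dropping under blow-up) without quoting either formula. One point you should make explicit: the identification of $\delta_{0,I}\cdot\rho$ with a normal-bundle degree must pass through \Cref{intersection}, i.e. $\deg_\rho\delta_{0,I}=\deg_\rho j^*\D_{0,I}$, computed in $\bMgn$ via the clutching map -- this is exactly where the transversality statement \Cref{trans} underlying \Cref{intersection} enters, since a priori the normal bundle of $\delta_{0,I}$ in $\mc{U}_{g,n}$ need not agree with the restriction of that of $\D_{0,I}$ in $\bMgn$. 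With that said, your computation $\delta_{0,I}\cdot\rho=-1-0=-1$ is correct (the $C$-side factor is constant, and the tail-side contribution is the degree of $\psi_*$ on a fiber of $\cl{\mc{M}}_{0,k+1}\to\cl{\mc{M}}_{0,k}$), and the vanishing $b_{0,I}=0$ follows.
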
 	

\begin{proof}
	Recall that in order for $\delta_{0,I}$ to exist, we must have $|I|\geq 2$. Assume that $I$ does not contain $n$. Consider a smooth hyperelliptic curve $C_1$ of genus $g$, a smooth rational curve $C_2$, and let $q_1\in C_1$ and $q_2\in C_2$. Consider distinct points $p_j$ on $C_1$ different from $q_1$, one for each $j\in I^c$, $j\neq n$, and distinct points $p_h$ on $C_2$ different from $q_2$, one for each $h\in I$. Let $X$ be the blow-up of $C_1\times C_1$ at $(q_1,q_1)$ and $(p_j,p_j)$ for all $j\in I^c$, $j\neq n$. Glue $X$ and $C_2\times C_1$ along the proper transform of $\set{q_1}\times C_1$ in $X$ and $\set{q_2}\times C_1$. Let $\sigma_j$ be the proper transform of $\set{p_j}\times C_1$ for $j\in I^c$, $j\neq n$, let $\sigma_h$ be the proper transform of $\set{p_h}\times C_1$ for $h\in I$, and let $\sigma_n$ be the proper transform of the diagonal of $C_1\times C_1$. This gives a family $(Y\to C_1;\sigma_1,\dots,\sigma_n)$, which we call $\rho_2$. Note that $\rho_2$ is entirely contained in $\Hgn^{rt}$.
	
	The glueing of the proper transform of $\set{q_1}\times C_1$ and $\set{q_2}\times C_1$ corresponds to a node $q$ in each fiber, such that $\sigma_j$ belongs to the component of genus $0$ through $q$ if and only if $j\in I$; there are no isolated nodes with the same property. The self-intersection of $\set{q_1}\times C_1$ and $\set{q_2}\times C_1$ is zero, and blowing up decreases the self-intersection by one, so by \Cref{intersection} and \cite[Lemma 3.94]{harris2006moduli}: 
	\[\deg_{\rho_2}\delta_{i,I}=-1.\]
	For every $j\in I^c$, $j\neq n$, there is exactly one fiber admitting a node $q'$, such that $\sigma_j$ and $\sigma_n$ belong to the component of genus $0$ through $q'$, and no other section does. Finally, there is one fiber consisting of a curve of genus $g$ and a tree with two rational components, such that $\sigma_j$ marks the tree if and only if $\sigma\in I\cup\set{n}$. By \Cref{intersection} and \cite[Lemma 3.94]{harris2006moduli}: 
	\[\deg_{\rho_2}\delta_{0,I\cup\set{n}}=1,\quad \deg_{\rho_2}\delta_{0,\set{j,n}}=1\quad (j\in I^c, j\neq n),\]
	and the other divisors have degree zero. By \Cref{lemma1} $b_{0,\set{j,n}}=0$, it follows that $b_{0,I}=b_{0,I\cup\set{n}}$. Permuting sections and letting $I$ vary, we deduce that $b_{0,I}$ is independent of $I$. By \Cref{lemma1}, we conclude that $b_{0,I}=0$ for every $I$.
\end{proof}
	 
	 In \cite[Theorem XIII.8.4]{arbarello2011geometry}, where the independence of boundary divisors is shown for $\bHg$ (see \cite[\S 4.(b)]{cornalba1988divisor} for the original proof), one-parameter families $F_1,\dots,F_{2g}$ of unmarked stable hyperelliptic curves of genus $g$ are constructed, such that the general fiber is smooth and:
	 
	\begin{enumerate}[label=(\roman*)]
		\item any singular fiber of $F_1$ has one node of type $\eta_{irr}$, and no other node;
		\item for $i\geq 1$, any singular fiber of $F_{2i+1}$ either has a pair of nodes of type $\eta_i$, and no other node, or has a node of type $\eta_{irr}$, and no other node;
		\item for $i\geq 1$, any singular fiber of $F_{2i}$ either has a node of type $\delta_i$, and no other node, or has a node of type $\eta_{irr}$, and no other node.
	\end{enumerate}
We now endow suitably modified versions of the $F_h$ with $n$ sections, and use them as test curves.
	
	\begin{lemma}\label{lemma3}
	We have $c_{irr,n}=0$. 	
	\end{lemma}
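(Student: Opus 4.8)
The plan is to isolate the coefficient $c_{irr,n}$ by using the family $F_1$ of \cite[Theorem XIII.8.4]{arbarello2011geometry}, equipped with $n$ sections, as a test curve. The key feature is that every singular fiber of $F_1$ carries a single node of type $\eta_{irr}$ and no other node, so a marked version of $F_1$ will meet, among the divisors $\eta_{irr,n}$, $\eta_{i,I}$, and $\delta_{i,I}$ with $i\geq 1$, only the divisor $\eta_{irr,n}$. First I would add $n$ sections: the base of $F_1$ is a smooth curve $B$ with smooth hyperelliptic generic fiber, and I would choose pairwise disjoint sections $\sigma_1,\dots,\sigma_n$ of $F_1\to B$ avoiding the nodes of the singular fibers (for instance, after a suitable base change, as preimages of sections of the underlying family of $(2g+2)$-marked rational curves passing through points off the branch locus). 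This produces a family $\rho_3=(F_1\to B;\sigma_1,\dots,\sigma_n)$ whose generic fiber lies in $\Hgn$ and whose singular fibers are stable as unmarked curves with all marked points at smooth points; hence $\rho_3$ is entirely contained in $\bHgn^{\circ}\subseteq\mc{U}_{g,n}$, and degrees against $D$ may be computed. In particular, since the sections are disjoint and miss the nodes, no rational tails appear, so $\deg_{\rho_3}\delta_{0,I}=0$ for all $I$ (this also follows from \Cref{lemma2}).

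Next I would compute $\deg_{\rho_3}D$. Because $F_1$ has no node of type $\delta_i$ or $\eta_i$ for $i\geq 1$, \Cref{intersection} and \cite[Lemma 3.94]{harris2006moduli} give $\deg_{\rho_3}\delta_{i,I}=\deg_{\rho_3}\eta_{i,I}=0$ for all $i\geq 1$ and all $I$. Combined with $a_i=0$ and $b_{0,I}=0$ (already established), the relation $D=0$ collapses to
\[0=\deg_{\rho_3}D=c_{irr,n}\,\deg_{\rho_3}\eta_{irr,n}.\]
It then remains to show $\deg_{\rho_3}\eta_{irr,n}\neq 0$. Since the $\eta_{i,I}$ contributions vanish, \Cref{intersection} yields $\deg_{\rho_3}\eta_{irr,n}=\deg_{\rho_3}j^*\D_{irr,n}$, which counts the fibers of $\rho_3$ acquiring a non-separating node. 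As the sections meet the fibers at smooth points away from the nodes, adding them does not alter the local geometry at the $\eta_{irr}$ nodes; pushing forward along $\cl{\pi}_n$, for which $\cl{\pi}_n^{-1}(\eta_{irr})=\eta_{irr,n}$ is reduced and irreducible by \eqref{irr}, this degree equals $\deg_{F_1}\eta_{irr}$, which is nonzero by the construction of $F_1$ in \cite[Theorem XIII.8.4]{arbarello2011geometry}. Hence $c_{irr,n}=0$.

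The main obstacle is the construction in the first step: producing $n$ pairwise disjoint sections of $F_1$ that avoid all the nodes, so that the test family genuinely lies in $\mc{U}_{g,n}$ and the multiplicity of $\eta_{irr,n}$ along each degenerate fiber is the same as that of $\eta_{irr}$ in the unmarked family. Once the sections are in place, the degree computation is a direct application of \Cref{intersection} together with the vanishing results already proved.
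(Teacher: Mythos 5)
Your overall strategy is exactly the paper's: mark the family $F_1$ of \cite[Theorem XIII.8.4]{arbarello2011geometry} with $n$ sections through smooth points of the fibers, check that the resulting family lies in $\bHgn^{\circ}\subseteq\mc{U}_{g,n}$, and then observe that against such a family every generator of $D$ pairs to zero except $\eta_{irr,n}$, which has positive degree, forcing $c_{irr,n}=0$. That second half of your argument, including the use of \Cref{intersection}, \eqref{irr} and \cite[Lemma 3.94]{harris2006moduli}, is correct; the only (harmless) inaccuracy is that $\deg_{\rho_3}\eta_{irr,n}$ equals $\deg_{F_1}\eta_{irr}$ multiplied by the degree of whatever base change you perform, but positivity is all that is needed.

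The genuine gap is in the step you yourself flag as the main obstacle, and the mechanism you propose for it cannot work as stated. In the construction of $F_1$, the branch divisor $B\subset\P^1\times\P^1$ has bidegree $(2g+2,2)$, while any section of the projection defining the family is a curve of bidegree $(1,d)$ and hence meets $B$ in $2+d(2g+2)\geq 2$ points (the same count rules out multisections avoiding $B$). So there are \emph{no} sections of the underlying rational family ``passing through points off the branch locus'': the preimage in the double cover $X$ of any such section is a \emph{connected} double cover of $\P^1$, ramified exactly over the points where the section meets $B$. Moreover, after any base change $C\to\P^1$ splitting such a preimage, the two resulting sections necessarily meet: over every point of $C$ lying above a ramification point, the fiber of the pulled-back preimage is a single point (of multiplicity two), through which both sections pass. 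Thus the family you describe is not yet a family of $n$-pointed stable curves, and the vanishing $\deg_{\rho_3}\delta_{0,I}=0$ is not automatic. Two repairs are available, both compatible with your degree computation: (i) take $n$ distinct general points $p_1,\dots,p_n\in\P^1$, whose lines $\set{p_j}\times\P^1$ are pairwise disjoint and avoid $B^s$, split each preimage $C_j\subset X$ by a base change, and keep only \emph{one} section from each pair; sections from distinct $C_j$ are disjoint and avoid all nodes, and a marking at a Weierstrass point of a smooth fiber causes no harm; or (ii) keep both sections of each pair, blow up their transverse intersection points (which lie in smooth fibers), and note that the new fibers contribute only to divisors $\delta_{0,I}$ with $\abs{I}=2$, whose coefficients already vanish by \Cref{lemma1}. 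The paper uses the same preimage-plus-base-change device as you do (and in the proof of \Cref{lemma4} it resolves intersections of sections exactly as in (ii)), so with either repair your argument goes through.
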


	\begin{proof}
	We start by recalling the construction of $F_1$; see \cite[Theorem XIII.8.4]{arbarello2011geometry} for more details. In $\P^1\times\P^1$, consider general divisors $B_1,B_2$ of bidegree $(1,1)$, general divisors $B_3,\dots,B_{2g+2}$ of bidegree $(1,0)$, and set $B:=\sum B_h$. Then $B$ has bidegree $(2g+2,2)$ and its singular locus $B^{s}$ consists of $4g+2$ nodes. If $R$ is the blow-up of $\P^1\times \P^1$ at $B^s$, and $D_j$ are the proper transforms of the $B_j$, the family $R\to \P^1$ induced by the second projection is a family of nodal curves of genus $0$, stable when marked with the $D_j$. Moreover, $\O(\sum D_j)$ is a square, hence one may consider the double cover $X\to R$ branched along the $D_j$. The composition $X\to R\to \P^1$ is a family of (unmarked) semistable curves; c.f. \cite[p. 392]{arbarello2011geometry}. Using the stable model construction (see \cite[Proposition X.6.7]{arbarello2011geometry}), one may contract the unstable rational components in $X\to\P^1$, to obtain a family of hyperelliptic stable curves over $\P^1$, called $F_1$. By construction, $F_1$ is as in (i).
	
	Let now $k\geq 0$, and consider $k$ points $p_1,\dots,p_k$ in $\P^1$. If we require the $p_j$ to be sufficiently general, then the $\set{p_j}\times\P^1$ are not contained in $B$, and do not intersect $B^{s}$. Let $C_j\c X$ be the inverse image of the strict transform of $\set{p_j}\times\P^1$. It is a ramified double cover of $\P^1$, possibly split. Now a base change along a suitable ramified cover $C\to \P^1$ produces a family $X'=X\times_{\P^1}C\to C$, together with $2k$ disjoint sections of $X'\to C$. For example, if $C_1\to \P^1$ is not already split, one might base change via the map $C_1\to \P^1$ itself to split it, then do the same for the base change of $C_2\to \P^1$ to $C_1$, and so on. Note that the curves appearing as fibers of $X'\to C$ are the same as those appearing in $X\to\P^1$. The stable model construction of \cite[Proposition X.6.7]{arbarello2011geometry} also applies to pointed curves. It yields a family $F_1'$ of $2k$-pointed stable hyperelliptic curves over $C$, such that every fiber is stable as an unmarked curve, the general fiber is smooth, and the singular fibers have exactly one node, which is of type $\eta_{irr}$. Since $k$ was arbitrary, we may assume that $2k\geq n$, and after forgetting some sections we obtain a family contained in $\Hgn^{\circ}$, hence in $\mc{U}_{g,n}$. By \Cref{intersection} and \cite[Lemma 3.94]{harris2006moduli}, it follows that $\on{deg}_{F'_1}(\eta_{irr,n})=\on{deg}_{F'_1}(\D_{irr,n})>0$, and the degree of every other divisor is zero. We have $0=\on{deg}_{F'_1}(D)=c_{irr,n}\on{deg}_{F'_1}(\D_{irr,n})$, hence $c_{irr,n}=0$. 	
	\end{proof}

	\begin{lemma}\label{lemma4}
	We have  $c_{i,I}=0$ for every $i\geq 1$ and every $I$.
	\end{lemma}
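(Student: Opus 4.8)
The plan is to mirror the proof of \Cref{lemma3}, replacing the family $F_1$ by $F_{2i+1}$. Recall that the only singular fibers of $F_{2i+1}$ carry either a pair of conjugate nodes of type $\eta_i$ or a single non-separating node of type $\eta_{irr}$; in particular $F_{2i+1}$ has no fibers of type $\delta_j$ and no fibers of type $\eta_j$ for $j\neq i$. Fixing $i\geq 1$ and a subset $I\c\set{1,\dots,n}$, I would add sections to a modification of $F_{2i+1}$ exactly as in \Cref{lemma3}: pull back sufficiently general points of the base, base change along a ramified cover to split the resulting double covers into honest sections, and apply the stable model construction of \cite[Proposition X.6.7]{arbarello2011geometry} to the pointed family. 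The points should be chosen so that at every $\eta_i$-degeneration the sections indexed by $I$ specialize to the genus-$i$ component and the remaining sections specialize to the genus-$(g-i-1)$ component. After forgetting superfluous sections this yields a one-parameter family $F'$ of $n$-pointed stable hyperelliptic curves whose singular fibers are of type $\eta_{i,I}$ or $\eta_{irr,n}$, with smooth general fiber and with every fiber stable as an unmarked curve; hence $F'\c\bHgn^{\circ}\c\mc{U}_{g,n}$.

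I would then compute $\on{deg}_{F'}(D)=0$ via \Cref{intersection} and \cite[Lemma 3.94]{harris2006moduli}. The $\psi$-terms drop out because the $a_j$ vanish; all $\delta$-terms drop out because $F'$ has no $\delta$-fibers, so that we never need the (still unproven at this stage) vanishing of the $b_{j,I}$ with $j\geq 1$; the term $c_{irr,n}\eta_{irr,n}$ drops out by \Cref{lemma3}; and $\on{deg}_{F'}(\eta_{j,J})=0$ whenever $j\neq i$. What survives is the single identity $0=\on{deg}_{F'}(D)=c_{i,I}\on{deg}_{F'}(\eta_{i,I})$. The number $\on{deg}_{F'}(\eta_{i,I})$ is the same self-intersection/smoothing-parameter count that makes $\on{deg}_{F_{2i+1}}(\eta_i)$ nonzero in the proof of independence for $\bHg$ in \cite[Theorem XIII.8.4]{arbarello2011geometry}, and it remains nonzero after base change, blow-up and stabilization; therefore $c_{i,I}=0$. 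Relabelling the sections and re-choosing the marking points lets $I$ range over all subsets, and then $i$ ranges over $1\leq i\leq\frac{g-1}{2}$, completing the lemma.

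The hard part will be controlling the marked points at the $\eta_i$-degenerations, namely ensuring that one choice of marking points realizes the \emph{same} type $I$ at \emph{all} $\eta_i$-fibers of $F'$. If instead the fibers carried several types $I'$, the computation would only produce a linear relation $\sum_{I'}c_{i,I'}\on{deg}_{F'}(\eta_{i,I'})=0$ from which no individual coefficient could be extracted. In the construction, the section through a point $p$ of the first $\P^1$ specializes to the genus-$i$ or the genus-$(g-i-1)$ component of an $\eta_i$-fiber according to which of the two rational components of the degenerate quotient curve carries the image of $p$; I would verify that the nodes of the $\eta_i$-fibers all occur in a fixed region of the first $\P^1$, so that placing each marking point far enough to the appropriate side forces its section onto the prescribed component uniformly across all $\eta_i$-fibers. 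A minor case to record separately is $g$ odd with $i=\frac{g-1}{2}$, where the two components have equal genus and $\eta_{i,I}=\eta_{i,I^c}$; there the identical computation applies once the two labels are identified.
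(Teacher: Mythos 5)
Your overall strategy (modify $F_{2i+1}$, add sections, pass to the stable model, and compute degrees via \Cref{intersection}, quoting \Cref{lemma3} to kill the $\eta_{irr,n}$ term) is the paper's strategy, and you correctly isolate the crux: forcing the sections to hit the two components of the $\eta_i$-fibers according to the prescribed set $I$. But your proposed resolution of that crux does not work, because it rests on a wrong picture of the degeneration. In the construction of $F_{2i+1}$, the unique fiber $\Gamma$ with a pair of $\eta_i$-nodes arises by blowing up the ordinary $(2i+2)$-fold point $p$ of the branch divisor: its two components are the double cover of the proper transform of the fiber line of $\on{pr}_2$ through $p$ and the double cover of the exceptional curve $E_p$ of the blow-up. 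A section obtained from a general point $q$ of the first $\P^1$, i.e.\ from the divisor $\set{q}\times\P^1$, meets the fiber over $\on{pr}_2(p)$ at the point $(q,\on{pr}_2(p))$, which lies on the proper transform of the fiber line whenever $q\neq \on{pr}_1(p)$. Hence \emph{all} such sections specialize to the \emph{same} component of $\Gamma$, no matter where on $\P^1$ you place the points: the other component lies over the exceptional curve, whose image in $\P^1\times\P^1$ is the single point $p$, so it is not reachable from any ``region'' of the first $\P^1$. Your recipe therefore only produces families whose $\eta_i$-fibers are marked entirely on one side, which proves $c_{i,I}=0$ only for $I=\emptyset$ (or its complement), not for arbitrary $I$.

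The missing ingredient, which is the actual new idea in the paper's proof relative to \Cref{lemma3}, is a second kind of section: inverse images of bidegree $(1,1)$ divisors $L_j\c\P^1\times\P^1$ passing through $p$. Their proper transforms meet $E_p$ rather than the proper transform of the fiber line, so the resulting sections specialize to the other component of $\Gamma$; taking $h$ sections of the first kind and $k$ of the second, with $2h,2k$ large enough, realizes any prescribed $I$. This forces a secondary correction to your degree count: once both kinds are present, intersections among the section divisors are unavoidable ($(1,0)\cdot(1,1)=1$, and two $(1,1)$'s through $p$ meet at one further point), so after blowing up these intersection points and stabilizing, the family acquires fibers consisting of a smooth genus $g$ curve plus a rational tail with two markings. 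Contrary to your claim that ``$F'$ has no $\delta$-fibers,'' the degree of $D$ on the family therefore does involve the coefficients $b_{0,\set{m,m'}}$; the computation still closes because these vanish by \Cref{lemma1}, which is exactly why the paper invokes \Cref{lemma1} together with \Cref{lemma3} at the final step.
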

	
	\begin{proof}
	We first recall the construction of $F_{2i+1}$. Let $p\in \P^1\times\P^1$ be a point, and choose divisors $B_1,\dots,B_{2i+2}$ of bidegree $(1,1)$, generic among those passing through $p$, and general divisors $B_{2i+3},\dots,B_{2g+2}$ of bidegree $(1,0)$. Set $B:=\sum B_j$. The singular locus $B^s$ of $B$ consists of an ordinary $(2i+2)$-fold point at $p$, and $(i+1)(4g-2i+1)$ nodes. Let $R\to\P^1\times\P^1$ be the blow-up at $B^s$. If $D_j$ is the proper transform of $B_j$, then $\O(\sum D_j)$ is a square, hence we may consider the two-sheeted double covering $X\to R$ ramified at the $D_j$. The composition of $X\to R\to \P^1\times \P^1$ with the second projection $\on{pr}_2:\P^1\times\P^1\to \P^1$ is a family of semi-stable curves, and we let $F_{2i+1}$ be its stable model. By construction, $F_{2i+1}$ is as in (ii).
	
	Let now $h,k\geq 0$, and consider $h$ general points $p_1,\dots,p_h\in \P^1$ and $k$ divisors $L_{h+1},\dots,L_{h+k}\c \P^1\times\P^1$ of bidegree $(1,1)$ passing through $p$. By choosing the configuration to be sufficiently general, we may assume that all intersections between the $B_j$, the $L_j$, and the $\set{p_j}\times\P^1$ are transverse, that all intersection points other than $p$ are nodes mapping to different points of $\P^1$ under $\on{pr}_2$, and that all intersections are transversal. 
	
	For $j=1,\dots,h$, denote by $C_j\c X$ the inverse image of the proper transform of $\set{p_j}\times \P^1$, and for $j=h+1,\dots,k$ let $C_j\c X$ be the inverse image of the proper transform of $L_j$. The $C_j$ are ramified double covers of $\P^1$. As in the proof of \Cref{lemma3}, using the $C_j$ we construct a ramified cover $C\to \P^1$ such that the pullback of $C_j$ along $C\to \P^1$ splits as the disjoint union of two sections $\sigma_j^{+}$ and $\sigma_j^-$. We denote by $X'\to C$ the base change of $X\to \P^1$ along $C\to \P^1$.
	
	Let $\Gamma\c X$ be the fiber of $X\to \P^1$ at $\on{pr}_2(p)$. Then $\Gamma$ is a stable nodal curve of genus $g$ with a pair of nodes of type $\eta_i$, and no other nodes. It is the only fiber of $X\to \P^1$ with a pair of nodes of type $\eta_i$. Moreover, the $C_j$ intersect $\Gamma$ in the genus $i$ component when $1\leq j\leq h$, and in the genus $g-i-1$ component when $h+1\leq j\leq h+k$. Thus, in every fiber of $X'\to C$ isomorphic to $\Gamma$, the $\sigma_j^{\pm}$ mark the genus $i$ component when $1\leq j\leq h$, and the genus $g-i-1$ component when $h+1\leq j\leq h+k$. We now pick $h$ and $k$ such that $2h\geq |I|$ and $2k\geq |I^c|$. For every $m\in I$ we let $\sigma_m$ be one of the $\sigma_j^{\pm}$ ($1\leq j\leq h$), and for every $m\in I^c$ we let $\sigma_m$ be one of the $\sigma_j^{\pm}$ ($h+1\leq j\leq h+k$), in such a way that each $\sigma_j^{\pm}$ is selected at most once.
	
	By our genericity requirement, all the intersections between the $C_j$ are transverse, and the morphism $C\to \P^1$ is unramified at the images of the intersection points. Therefore, the intersections between the $\sigma_m$ belong to a smooth fiber of $X'\to C$ and are transverse. Blowing up the intersection points of the $\sigma_m$, we obtain a family of semistable $n$-pointed curves $X''\to C$ marked by the proper transforms $\epsilon_m$ of the $\sigma_m$. After blow-up, some smooth fibers of $X'\to C$ acquire a smooth rational component with two markings in $X''\to C$; the other fibers are unchanged. Let $F'_{2i+1}:=(X'''\to C; \tau_1,\dots,\tau_n)$ be the stable model of $(X''\to C;\epsilon_1,\dots,\epsilon_n)$; see \cite[Proposition X.6.7]{arbarello2011geometry}. It is a family  of stable $n$-pointed hyperelliptic curves of genus $g$ with the following properties:
	\begin{itemize}
		\item the general fiber is smooth;
		\item a singular fiber either admits a pair of nodes of type $\eta_i$ and no other node, or exactly one node of type $\eta_{irr}$, or is the union of a smooth curve of genus $g$ and a smooth rational curve with exactly two markings;
		\item for every fiber admitting a pair of nodes of type $\eta_i$, a section $\tau_j$ marks the component of genus $i$ if and only if $j\in I$.
	\end{itemize}
	In particular, $F'_{2i+1}$ is entirely contained in $\mc{U}_{g,n}$.
	 
	By \Cref{intersection} and \cite[Lemma 3.94]{harris2006moduli}, $\on{deg}_{F'_{2i+1}}(\eta_{i,I})=2\on{deg}_{F'_{2i+1}}(\D_{irr,n})>0$. By \Cref{lemma1} and \Cref{lemma3}, we have $0=\on{deg}_{F'_{2i+1}}(D)=c_{i,I}\on{deg}_{F'_{2i+1}}(\eta_{i,I})$, hence $c_{i,I}=0$, as desired. 
	\end{proof}
	
	\begin{lemma}\label{lemma5}
	We have $b_{i,I}=0$ for every $i\geq 1$ and every $I$.	
	\end{lemma}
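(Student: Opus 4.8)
The plan is to prove \Cref{lemma5} by the test-curve method used for \Cref{lemma4}, now applied to the family $F_{2i}$ of \cite[Theorem XIII.8.4]{arbarello2011geometry}, whose singular fibers are described in (iii) above. First I would recall its construction. As in the cases of $F_1$ and $F_{2i+1}$, it arises from a family of double covers of a blown-up $\P^1\times\P^1$, but the branch configuration is arranged so that over one point of the base an \emph{odd} number of branch points come together. The resulting node is fixed by the hyperelliptic involution, hence is a Weierstrass point, and the corresponding fiber is of type $\delta_i$, separating the curve into components of genera $i$ and $g-i$; every other singular fiber of $F_{2i}$ carries a single non-separating node of type $\eta_{irr}$, as recorded in (iii). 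The only feature of this construction I need is the explicit geometry near the $\delta_i$ fiber $\Gamma$, which allows marked sections to be added as below.

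Next I would enrich $F_{2i}$ with $n$ sections, following the recipe of \Cref{lemma4}. After a ramified base change $C\to\P^1$ splitting suitable bisections of the cover into pairs of disjoint sections $\sigma_j^{\pm}$, I distribute the markings so that, on $\Gamma$, the genus $i$ component is marked exactly by the sections indexed by $I$ and the genus $g-i$ component by those indexed by $I^c$; concretely, the sections specializing onto the collision locus land on the genus $i$ bubble, while those chosen away from it land on the other component. This is possible for every $I\c\set{1,\dots,n}$ because $i\geq 1$ and $g-i\geq 1$, so each component of $\Gamma$ stays stable under any distribution of markings. Blowing up the (generic, transverse) crossings of the selected sections and passing to the stable model yields a family $F'_{2i}$ of $n$-pointed stable hyperelliptic curves whose general fiber is smooth and whose singular fibers are of type $\delta_{i,I}$, $\eta_{irr,n}$, or $\delta_{0,J}$, the last coming from the rational bubbles produced by the blow-ups. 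In particular $F'_{2i}\c \mc{U}_{g,n}$.

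I would then compute degrees against the class $D$ of (\ref{divisor}), recalling that the $\psi$-coefficients $a_i$ have already been shown to vanish. By \Cref{intersection} we have $j^*\D_{i,I}=\delta_{i,I}$, so \cite[Lemma 3.94]{harris2006moduli} gives $\deg_{F'_{2i}}(\delta_{i,I})=\deg_{F'_{2i}}(\D_{i,I})$, and this number is nonzero because $\Gamma$ is the unique $\delta_i$ fiber and its Weierstrass node is smoothed transversally over the base, exactly as in the unmarked computation of \cite[Theorem XIII.8.4]{arbarello2011geometry}. The only other boundary divisors meeting $F'_{2i}$ are the $\delta_{0,J}$ and $\eta_{irr,n}$, whose coefficients vanish by \Cref{lemma1}, \Cref{lemma2} and \Cref{lemma3}; no $\eta_{i',I'}$ occurs, since $F_{2i}$ has no fiber with a conjugate pair of nodes, and no $\delta_{i',I'}$ with $(i',I')\neq (i,I)$ occurs, since $\Gamma$ is the only separating node and all its copies carry the same marking pattern $I$. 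Hence $0=\deg_{F'_{2i}}(D)=b_{i,I}\deg_{F'_{2i}}(\delta_{i,I})$, forcing $b_{i,I}=0$. Letting $i$ and $I$ range over all admissible values gives the lemma, and together with the previous lemmas this shows that every coefficient in (\ref{divisor}) vanishes, completing the proof of \Cref{hhg}(b).

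The step I expect to be the main obstacle is the construction and control of $F'_{2i}$ near $\Gamma$. Because the $\delta_i$ degeneration comes from an \emph{odd} collision of branch points, the total space of $F_{2i}$ is more delicate than in the $\eta_i$ case of \Cref{lemma4}: the naive blow-up that worked for $F_{2i+1}$ violates the divisibility of the branch class needed to form the double cover, so one must use the precise ACGH model. Tracking the added sections through the base change, blow-up, double-cover and stabilization steps, and checking that they specialize onto the intended components of $\Gamma$ and miss its Weierstrass node, is the part requiring the most care; it is exactly what guarantees that the marking pattern realizes $\delta_{i,I}$ and that $\deg_{F'_{2i}}(\delta_{i,I})\neq 0$.
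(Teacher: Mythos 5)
Your proposal is correct and takes essentially the same route as the paper: the paper's proof of this lemma is exactly to repeat the argument of \Cref{lemma4} with the families $F_{2i}$ of \cite[Theorem XIII.8.4]{arbarello2011geometry} in place of $F_{2i+1}$, which is what you carry out, including the section-adding construction, the degree computation against $\delta_{i,I}$, and the use of \Cref{lemma1}, \Cref{lemma2} and \Cref{lemma3} to eliminate the other coefficients. Your observation that the odd collision of branch points forces the exceptional curve into the branch divisor of the double cover is a correct elaboration of a detail the paper leaves implicit.
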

	
	\begin{proof}
	The proof is essentially the same as that of \Cref{lemma4}, using the families $F_{2i}$ of \cite[Theorem XIII.8.4]{arbarello2011geometry} instead of the $F_{2i+1}$.
	\end{proof}
	
	Combining all the steps, we get $D=0$ in $\on{Pic}(\mc{U}_{g,n})_{\Q}$. This completes the proof of \Cref{hhg}.
\end{proof}

\section*{Acknowledgments}
I would like to thank Mattia Talpo and Ben Williams for useful discussions and for reading a first version of the article, Madhav Nori for a very nice conversation on this topic, my advisor Zinovy Reichstein for his guidance, and Dan Petersen for encouraging me to write this paper. I thank the anonymous referees for their very helpful reports.

F.~Scavia, \small\textsc{Department of Mathematics, University of British Columbia,
	\\ Vancouver, British Columbia, V6T 1Z4}\par\nopagebreak
\textit{E-mail address}: \texttt{scavia@math.ubc.ca}

\end{document}